\documentclass[A4,14pt]{article}

\usepackage{etoolbox}
 \usepackage{ifthen}
\usepackage{makecell}
\usepackage{booktabs}
\usepackage{rotating}
\usepackage{thumbpdf,lmodern}
 \usepackage{bigints}
\usepackage[hidelinks, pdfpagemode={UseNone}, pdfstartview={XYZ null null 1.0}, pdfview={XYZ null null 1.0}]{hyperref}
\usepackage{subcaption}
\usepackage{relsize}
\usepackage{multirow}
\usepackage{minitoc}
 \usepackage{pdfpages}
\usepackage{amsmath,amsfonts,amssymb,amsthm,mathrsfs,mathabx}
\usepackage{stmaryrd}\usepackage{latexsym}
\usepackage{graphicx}
\usepackage{verbatim}
\usepackage{ulem}
\usepackage{tikz}
\usepackage{environ}
 \usepackage[latin1]{inputenc}  
\usepackage[T1]{fontenc}  
\usepackage{comment}
\usepackage{cleveref}

 \newcommand{\Coul}{\text{\rm{Coul}}}
\newcommand{\rr}{\mathfrak{r}} 

 \newcommand{\fluct}{\text{\rm{fluct}}}  
 \newcommand{\X}{     \mathsf X}  
    
 \renewcommand{\k}{{  \bf k}}  
    
 \newcommand{\s}{   \mathsf s}

\author{Rapha\"el Lachi\`eze-Rey}

  \newcommand{\what}[2]{ {\widehat{ #1}}^{ \hspace{-.1cm}    ~^{#2}}    }

\usepackage{color}

 \newcommand{\F}{  \mathscr  F}  
 \usepackage{geometry}
\newtheoremstyle{myexample}     
  {3pt}                         
  {3pt}                         
  {}                            
  {}                            
  {\bfseries}                   
  {.}                           
  { }                           
  {}                            
 
 \newcommand{\Z}{\mathbb{Z} _{\lambda }^{ d}}

 \newcommand{\Leb}[1][d]{  \mathscr  L^{ #1}}

 \newcommand{\T}[1][\lambda]{   \mathbb  T  _{ #1}^{ d}}  
\theoremstyle{myexample}

\newtheorem{theorem}{Theorem}[section]
\newtheorem{lemma}{Lemma}[section]
\newtheorem{proposition}{Proposition}[section]
\newtheorem{remark}{Remark}[section]

\newtheorem{corollary}{Corollary}[section]
\newtheorem{assumption}{Assumption}[section]

  \newcommand{\maxscale}{ b}

\renewcommand{\P}{\mathsf{P}}
\newcommand{\M}{\mathsf{M}}
\renewcommand{\S}{\mathsf{S}}
\newcommand{\C}{\mathscr{C}}

\newcommand{\B}{\mathscr{B}}

 \newcommand{\N}{  \mathscr  N}

\author{ Raphael Lachieze-Rey\thanks{INRIA Paris, Team Mathnet, Lab. MAP5, Université Paris Cité, raphael.lachieze-rey@math.cnrs.fr}}

\title{From smooth to discontinuous kernels: a variance transfer principle for hyperuniform processes}
\begin{document} 
\maketitle

 \begin{quote}

 {\bf Abstract.} We give a transfer principle for   the fluctuations of linear statistics of finite particle systems around Lebesgue measure: if  for a smooth kernel  the variance decays polynomially with some exponent $ \alpha \in \mathbb{R}$ compared to   independent (non-interacting) particles, then the number variance over balls centred at almost every point decays with exponent $ \min(\alpha ,1)$, times a $ \log$ term if $ \alpha  = 1$, over a possibly reduced range of scales for non-periodic systems. 
 
 We apply this principle to eigenvalues of random $ N\times N$ Girko matrices, leveraging results of Cipolloni et al. \cite{CEK}, and obtain the optimal perimeter-like number variance, on the microscopic and some mesoscopic scales range, after local averaging. We also apply the results to  Coulomb gases, by transferring the results of Serfaty \cite{Serfaty-HU}: we prove that $ 2D$-Coulomb gases are $ 2$-hyperuniform, i.e. they have surface order number variance, which is optimal, and that $ 3D$ Coulomb gases are $ 1$-hyperuniform. In $ 2D$, it allows to prove finite Coulomb energy and Wasserstein distance to Lebesgue measure. 
 \end{quote}
 
 \tableofcontents
 \label{toc}
 
  \section{Introduction}

Let $\X_{ N}$ be a random finite point configuration in $\mathbb R^d$  and for an integrable kernel $ f:\mathbb{R}^{ d} \to  \mathbb C $, let
\[
\X_{ N}(f):=\sum_{x\in \mathcal \X_{ N}} f(x)
\]
denote the associated linear statistic. Linear statistics are among the basic observables in the study of interacting  particle systems such as random matrix ensembles, Gibbs measures, or zeros of random functions. They interpolate between smooth mesoscopic observables, obtained from regular test functions, and counting observables, obtained from indicators of sets. A central example is the number of particles in the ball $ B(x,R)$ with center $ x$ and radius $ R>0$,
\[
\X_{ N}(B(x,R)):=\# \X_{ N}\cap {B(x,R)},
\]
whose variance is also called the \textit{number variance} (for balls). 
We study linear statistics $ \X_{N }(f)$, for a kernel $ f$ smooth or irregular, often  in the space $ \B_{ c}(\Sigma )$ of bounded functions compactly supported within some set $ \Sigma \subset \mathbb{R}^{ d}.$

Define $ f_{ R  } = f(\cdot /R)$ for some $ R = R_{ N}\geqslant 1$ called the  {\it scale}.   The canonical model without interaction is the  Poisson model with intensity $ 1$ on $ \Sigma  = B(0,N^{ 1/d})$. In this case, we have for  $ f\in \B_{ c}(B(0,1)) $ and $ R\leqslant N^{ 1/d}$
\begin{align*}
 \mathbf E \left[
\X_{ N}(f_{ R})
\right] =&  \int_{ \Sigma  }f_{ R } \sim R^{ d}\int_{  }f ,\\
  \textrm{Var}\left(\X_{ N}(f_{R })\right) = & \int_{\Sigma  }f_{ R }^{ 2} \sim R^{ d}\int_{  }f^{ 2}.
\end{align*}
Similar rates are obtained if $ \X_{ N}$ consists of    i.i.d. points uniform on $ \Sigma .$
In particular, the variance has  the same order as the expected number of points in $ f_{ R }$'s support. 

\subsection{Hyperuniformity}
  
 A major theme in the theory of interacting particle systems is the phenomenon of \textit{hyperuniformity}, namely the anomalous suppression of large-scale density fluctuations. Hyperuniformity at some scale $ R_{ N}\in [1,N^{ 1/d}]$ going to $ \infty $ means that the number variance in balls is reduced,
\[
\mathrm{Var}(\X_{ N}(B(0,R_{ N}))) = o(R_{ N}^{ d})  \qquad (N\to\infty).
\]

For several models in random matrices or Gibbs measures, the traditional scaling is to consider $ \tilde \X_{ N}: = N^{ -1/d}\X_{ N}$, in which case the expected variance for $ \tilde \X_{ N}(f_{ \rho })$ is of magnitude $(N^{ 1/d}\rho )^{ d-\alpha } $ and the points tend to concentrate around a fixed compact $ \Sigma _{ 0}$. We stick to the blowup  {\it microscopic} formulation with $ \X_{ N}$ as it is more natural in view of the methods employed here, and it also leads to a global notational simplification; it is also the natural scale to study infinite volume limits $ \X: = \lim_{ N}\X_{ N}$ in the space of point processes.

Reduced charge fluctuations in certain models, also referred to as {\it sum rules} in physics, were already observed  or conjectured a long time ago for particle systems \cite{martin1980charge, Leb}, or for zeros of random polynomials and eigenvalues of random matrices  \cite{ForresterHonner,Forrester}. 
The systematic investigation of this phenomenon  started in the 2000's with the team of S. Torquato  at Princeton  \cite{TS03}, who popularised the term  {\it hyperuniformity,} or J. Lebowitz at Rutgers University, sometimes under the terminology of {\it superhomogeneity}. Part of the interest from condensed matter physicists stems from the interpretation of hyperuniformity as transparency to small wavelengths.
 Besides its usefulness and appearances in other sciences, many popular mathematical models turned out to be  hyperuniform, in random matrices, statistical physics, random polynomials, quasicrystals, generating a literature growing exponentially, see the surveys  \cite{Tor18,GosLeb} for an in depth collection in physics or statistical physics, or the  recent mathematical survey  \cite{survey-hu}. 

\subsection{The transfer principle}  

 In the vast majority of cases, establishing sharp variance bounds for $ \X_{N }(f_{ R_{ N}})$ through mathematical analysis is easier for $ f$ smooth.
This is notably the case for the examples that we explore here,  where Fourier or energy methods  yield variance bounds for regular test functions, while the physically more visible number variance corresponds to a discontinuous kernel.  A recurring question is whether variance bounds obtained for smooth kernels are valid for discontinuous observables such as counting functions of geometric sets $ f = 1_{ B}$ \cite{CES,BauerschmiditBourgadeNikulaYau,Serfaty-HU}.  The main purpose of this paper is to show that  fluctuation bounds around Lebesgue measure for one fixed smooth non-negative test function automatically transfer to a broad class of irregular observables including ball indicators, after mandatory exponent truncation, in a broad framework.

 We more precisely assume that fluctuations around Lebesgue measure satisfy for some non-degenerate kernel $ f\geqslant 0$ with compact support, and all $ R$ in some scales range $ [1,N^{ \maxscale }]\subset [1,N^{ 1/d}]$,
\begin{align*}
 \mathbf E\left[
 \left|
\X_{ N}(f_{ R }(x + \cdot ))-\Leb(f)
\right| ^{ 2}
\right]\leqslant CR^{ d-\alpha }
\end{align*}  for all $ x$  in the bulk, i.e. not too close from $ \partial \Sigma ,$ where $ \Leb$ denotes Lebesgue measure, and $ \alpha \geqslant 0$ is the  {\it  hyperuniformity index}. Then we deduce that for most points    $ x_{ N} $ in the bulk, for $ R\in [1,N^{ \maxscale }]$ 
\begin{align*}
  \textrm{Var}\left(\X_{ N}(B(x_{ N},R))\right) = O(R^{ d-\min(\alpha ,1)})\ln(N)^{ {\bf 1}\{ \alpha  = 1 \}},
\end{align*}
see the precise statement at Theorem \ref{thm:non-asymp}. We also state in  Theorem \ref{thm:main-asymp} an infinite volume version:  for a system $ \P$ which is the limit of a subsequence $ \tau _{ x_{ N_{ j}}}\X_{ N_{ j}},N_{ j}\to \infty ,$ in the space of random measures, for $ \Leb$-a.a. point $ x\in \mathbb{R}^{ d}$, the growth rate of $  \textrm{Var}\left(\P(B(x,R))\right)$ as $ R\to \infty $ is in $ O(R^{d -\min(\alpha ,1)})\ln(R)^{ {\bf 1}\{ \alpha  = 1 \}}$. When $ \P$ is stationary, it means rigourous $ \alpha $-hyperuniformity for $ \P$.

The ideal case, where the transfer principle applies flawlessly, is 
when the system is periodic, i.e. defined on the torus $ \mathbb{R}^{ d}/ (N^{ 1/d}\mathbb{Z} )^{ d}$ with periodic boundary conditions. When this is not the case,    $ \maxscale $ must be smaller than $ (d(d + 2))^{ -1}$.  This reduced scale range  instead of $ \maxscale  = d^{ -1}$ is an artifact of the  current method, where we extract the approximately stationary local restrictions of the model and compare them with   stationarised versions.  

Also, the obtained rates apply to a broad range of irregular kernels, not only ball indicators. For $ \gamma >0,$ let $ \F_{ \gamma }$ be the class of bounded and compactly supported functions $ \varphi $ such that $  | \hat \varphi (\xi ) |  = O(\|\xi \|^{ -\frac{ d + \gamma }{2}})$ as $ \xi \to \infty .$ The unit ball indicator $ \varphi  = 1_{ B_{ 1}}$ belongs to $ \F_{ 1}$ (see  \eqref{eq:bessel}). In this case, the truncated variance rate is rather $$O( R^{ d-\min(\alpha ,\gamma )}\ln(R)^{ {\bf 1}\{ \alpha  = \gamma  \}}).$$

Let us discuss the obtained variance rates:\begin{itemize}
 \item For $ \alpha  = 0,  \textrm{Var}\left(\X_{ N}(B(x_{ N},R))\right) \asymp R^{ d}$  corresponds to the the rate of   i.i.d variables uniform in $  \Sigma $,  or   to a Poisson process, or any other ``standard'' process (zeros of random stationary Gaussian functions, cluster or Cox processes, short range Gibbs measures, ...)
\item The case $ \alpha >0$ implies a variance reduction, or main term cancellation, i.e.  {\it hyperuniformity}. More precisely, the assumption means that the system is at least ``$ \alpha $-hyperuniform''. For $ \alpha \in (0,1)$, the exponent is not truncated, this is the case for instance for Riesz gases with exponent $ s\in (0,1)$, see Section \ref{sec:riesz}.
\item We see that a transition occurs at $ \alpha  = 1$.   The $ \ln(R)$ term for $ \alpha  = 1$, corresponding to ``$ 1$-hyperuniformity''  is sharp, in the sense that many models for which $   1 $ is the largest possible exponent, such as the GUE ensemble in dimension $ d = 1$, have a logarithmic number variance. The case $ \alpha >1$ in 1D corresponds to a bounded number variance, it is the case for instance for the one-dimensionnal Coulomb gas \cite{GosLeb}.
\item  Torquato \cite{Tor18} divides hyperuniform systems in three classes $ \alpha >1$  (class I)$,\alpha  = 1$  (class II)$,\alpha \in (0,1)$ (class III), reflecting number variance behaviour over growing balls. It is also formally possible to build  hyperuniform point processes falling in none of these classes, i.e.  for which the variance reduction is subpolynomial  \cite{survey-hu,DFHL}.
  \item For $ \alpha >1$ (class I), the ball number variance growth is in surface order $ R^{ d-1}$. By Beck's lower bound principle \cite{Beck87}, this is a universal lower bound for balls number variance.
The precise value of $ \alpha $ is therefore uninformative for the   number variance over balls. It does not mean that all class I systems are similar, they incompass both periodic and disordered systems. They can in particular behave very differently when investigated over smooth kernels, especially for large $ \alpha $, where it can be useful in numerical integration.
  \item The most standard case is $ \alpha  = 2$, corresponding to a sufficiently disordered  hyperuniform model, or more formally to a sufficiently decaying covariance function (\cite{GosLeb,survey-hu}), such as for the 1D log gas or the 2D ginibre ensemble.

\item   For the variance of irregular linear statistics others  than the ball indicator, i.e. for $ \gamma \neq 1$, a similar transition occurs at $ \alpha  = \gamma $ under some mild conditions. This leaves the possibility of kernels with a square integrable singularity.

\item We also have converse bounds, in the sense that number variance cannot decay faster than variance of smooth linear statistics, see Theorem \ref{thm:non-asymp}. In any  case, for most test functions $ \varphi \in \F_{ \gamma },$ one cannot have a rate $ o(R^{ d-\gamma })$ with a variant of   Beck's argument.

\item While most known models have index $ \alpha  = 2$, or $ \alpha  = 1$ (more often in dimension $ d = 1$), there exists a unique known disordered model for which it is proven that $ \alpha >2$; it is the zero set of Weyl polynomials, or in the infinite volume limit the zero set of the planar Gaussian Analytic Function, for which $ \alpha  = 4$  \cite{ST06}. There also exist some toy models with arbitrarily high $ \alpha $, see  \cite{torquato-tilings,Lr24}.

\item The purely atomic nature of particle systems is not used, and results should apply to other classes of random measures.
\end{itemize}

  \subsection{Applications to random matrices and Gibbs measures}  
 
We apply this principle to models of probability theory and statistical physics. Coulomb gases, also called  {\it one-component-plasmas}, or  {\it jelliums,} is a class of models where particles  interact pairwise via the Coulomb potential. It is a major theme in the statistical physics literature and is connected to many applications in mathematics and physics. See for instance the surveys  \cite{Lewin,serfaty-book} and references therein. Serfaty \cite{Serfaty-HU} and Bauerschmidt et al.  \cite{BauerschmiditBourgadeNikulaYau} proved variance cancellation for smooth linear statistics, with optimal rate in 2D,  and Serfaty writes that  {\it treating the case of nonsmooth kernels (...) remains a (significantly more) delicate problem.} Leblé  \cite{Leble} rigourously proved hyperuniformity for the 2D Coulomb gas    $ \X_{ N}$ with quadratic confinement with a logarithmic variance reduction, i.e. for some constant $ C,$ 
\begin{align}
\label{eq:leble-bound}
 \textrm{Var}\left(\X_{ N}(B(x,R))\right) \leqslant CR^{ 2}\ln(R)^{ -0.6},
\end{align} when $ B(x,R)$ is in the bulk. 
 {With the current transfer principle, we are able to prove  hyperuniformity with optimal surface order rate $ \asymp R $ in $ 2D$, and surface order magnitude times a logarithmic factor $ R^{ 2}\ln(R)$ in 3D, with the aforementionned caveats of reduced scales range and local averaging in the finite setting, see Theorems  \ref{thm:3D-Coul-finite}. As mentioned earlier, obtaining the full range $ [1,N^{ 1/d}]$ could be possible by working with the periodic version of the Coulomb gas, as in  \cite{CohHer}. These caveats disappear when studying a limit point $ \P$ in the space of random measures, or  {\it infinite volume Coulomb gas}, see Theorem \ref{thm:Coul-infinite}.
This proves that they are resp. $ 2-$ and $ 1-$hyperuniform. In dimension $ 2$, this allows thanks to the recent results of Huesman and Leblé  \cite{HueLebl} to prove finite Coulomb energy and Wasserstein distance per unit volume, which was not yet allowed by    \eqref{eq:leble-bound}, see Theorem \ref{thm:spectral-2D-Coul}. }

Our second application concerns eigenvalues of random matrices.  We  consider here $ N\times N$ Girko random matrices, i.e. matrices filled with  i.i.d. complex entries with finite second moment. The most famous example is Ginibre ensemble, obtained when the entries are complex Gaussian, where reduced variance in balls is known since Ginibre  \cite{Ginibre}. One of the major line of research in random matrix theory is  {\it universality}, i.e. proving that qualitative results are valid beyond the Gaussian case, under moment or density assumptions on the entries, see for instance the monographs  \cite{GuionnetBook,Mehta}.    Recently, Cipolloni, Erd\"os and Schr\"oder  \cite{CES} showed hyperuniformity for $ \mathscr{C}^{ 1}$-smooth kernels (actually $ H_{ 0}^{ 2}$-smooth), and commented that  {\it extending this study to less regular $ f$'s creates substantial difficulties}. Later, Cipolloni, Erd\"os and Kolupaiev \cite{CEK} showed  hyperuniformity on balls $ B(x,R)$, with rates of the form $ R^{ 2-\varepsilon }$ at all scales for   $ \varepsilon  = 1/20$. By transferring the rates of \cite{CES} about smooth linear statistics, we are able to give  optimal variance rates in $ R^{ 1}$ for the balls number variance,  on the reduced scale range  $R \in [1,N^{ 1/8}]$, and after a local averaging of the variance number, which means that the rates hold almost everywhere asymptotically.  

These two applications are consequences of the main result Theorem \ref{thm:non-asymp}, valid for general particle systems, and the reduced scales range $ [1,N^{   \frac{ 1}{d(d + 2)}}]$ is a consequence of some boundary effect that we explain below. 

\subsection{Proof ideas and plan}  

The fundamental result, core of this paper, is Theorem \ref{thm:abstract-general}, a variance transfer principle for any  particle system on the torus $ \mathbb{R} ^{ d}/ \mathbb{Z}^{ d} $ which law is invariant under torus translations. Typical examples are Gibbs measures with periodic boundary conditions  \cite{boursier,CohHer}. In this case, the transfer principle applies flawlessly, without boundary effect, reduced scales range, or local averaging. We illustrate this concept by applying it to periodic $ s$-Riesz gases on $ \mathbb{R}/\mathbb{Z} $, leveraging the results of Boursier \cite{boursier}; we are able to treat kernels with a square integrable singularity, regardless of $ s\in (0,1)$, at all scales. 
The proof is essentially spectral, relying on the finite-volume spectral measure on the dual lattice of the torus and decompose the variance into low and high frequency contributions. The low-frequency part is controlled by the assumed bound for the smooth statistic, while the high-frequency part is estimated using only the Fourier decay of the target kernel and the uniform translation boundedness of the spectral measure. 

 For general non-invariant $ \X_{ N}$, we study a subsample of $ \X_{ N}$ restricted on a cube of the form $ x_{ 0} + [1,N^{  \maxscale }]^{ d}$ where the process restriction is supposed to be asymptotically homogeneous. Hence this restriction can be approximated by a purely invariant model, for which the periodic variance transfer principle applies, as explained above. Using the approximation in the opposite direction allows to give similar number variance rates for the initial restriction. Boundary terms arising from the restriction to $ x_{ 0} + [1,N^{  \maxscale }]^{ d}$ produce higher-order fluctuations, which forces the condition $ \maxscale <1/d$ in order to not kill the rates obtained with the transfer principle.\\

The paper is organized as follows. In Section  \ref{sec:main} we state the transfer principle for finite size systems, with and without exact stationarity, and the consequences for infinite volume limit points. Sections \ref{sec:girko} and  \ref{sec:gibbs} are devoted to applications to respectively random matrices and Gibbs measures. In Section  \ref{sec:prfs} we give the general result on the torus and the proofs of Section  \ref{sec:main} results.

   \section{Transfer principle for particle systems}
   \label{sec:main}
Let $N\geqslant 1$  and $ \X_{ N}$ a locally finite random set of points in $ \mathbb{R}^{ d}$, and some  domain $ \Sigma = N^{ \frac{ 1}{d}}\Sigma _{ 0}$ where $ \Sigma_{ 0} \subset \mathbb{R}^{ d}$ is fixed.  
For $ A\subset \mathbb{R}^{ d}, k\in \mathbb{N}\cup \{\infty \}$, denote by $ \C_{ c}^{ k}(A)$ the class of continuously $ k$-times differentiable functions with compact support in $ A$, and $ \B_{ c}^{}(A)$ the class of bounded functions with compact support contained in $ A$, endowed with $ \|f\|_{ \infty } = \sup_{ x\in A} | f(x) | $. 
 {Denote the Fourier transform on $L^{ 1}( \mathbb{R}^{ d})$ with 
\begin{align*}
 \hat \varphi (\xi) = \int_{ \mathbb{R}^{ d}}e^{ -i\xi\cdot x}\varphi (x)dx,\xi\in  \mathbb{R}^{ d}.
\end{align*}  For $ B\subset \mathbb{R}^{ d}$, denote    $\F_{ \gamma }(B) $ as the space of   $ \varphi \in \F_{ \gamma }$ supported by $ B$. Also define for $ R>0$,
\begin{align*}
 B_{ R}: = B(0,R).
\end{align*}We have \begin{align*}
\widehat{ 1_{ B_{ 1}}}(\xi) = \|\xi\|^{ -d/2}J_{ d/2}(\xi)
\end{align*}
where $ J_{ d/2}$ is the Bessel function of order $ d/2$, see for instance  \cite{SteinWeiss}, it implies in particular 
\begin{align}
\label{eq:bessel}
 |  \widehat{ 1_{ B_{ 1}}}(\|\xi\|) |   = O((1 + \|\xi\|)^{ -\frac{ d + 1}{2}}),
\end{align} i.e. $ \varphi  = 1_{ B _{ 1}}$ belongs to $ \F_{ 1}$ in any dimension. }

We study the fluctuations of linear statistics around Lebesgue measure, i.e. for $ f\in  \B_{ c}^{}(\mathbb{R}^{ d} )$, let  {  $$  \overline{\X_{ N}}(f)= \sum_{x\in \X_{ N}}f(x)-\Leb(f).$$}

For $ x\in \mathbb{R}^{ d}$, denote by $ \tau _{ x}:y  \mapsto  x + y$ the translation operator, that naturally extends to sets and functions over $ \mathbb{R}^{ d}$, with $ \tau _{ x}f(y) = f(y-x),\tau _{ x}A = \tau _{ x}1_{ A}.$
Define the centred cube with sidelength $ L>0$ $$ \T[L ] = \left[
-L /2,L/2
\right)^{ d},$$
and for $R >0,$ the $ R$-interior of $ \Sigma :$
$$ \Sigma  ^{ R  } := \{x: \tau _{ x}  \mathbb T^{ d}_{2R } \subset \Sigma\} .$$\\

 \subsection{Non-asymptotic result}  
 \label{sec:non-asymp}
 
    Define for $ x_{ 0}\in \mathbb{R}^{ d},\varepsilon >0$ the $ \varepsilon -$local  variances average  : for $ f\in  \mathscr  B_{ c}(\mathbb{R}^{ d}),$  
\begin{align*} 
\overline{\textrm{Var}}^{\varepsilon  }\left(\X_{ N}(f)\right):
 = \frac{1}{(\varepsilon N^{1/d})^{d}}\int_{\T[\varepsilon N^{1/d}]}  \textrm{Var}\left(\X_{N}(\tau _{  x}f )\right)dx.
\end{align*}
Say a function $ f$ is  {\it non-degenerate} if on a non-empty open set $  | f | $ is lower bounded by some $ c>0.$ Define  for $ R>0,\alpha \in \mathbb{R},\gamma >0$ the target rate 
\begin{align*}
 V_{ \alpha ,\gamma }(R) : = R^{ d-\min(\alpha ,\gamma )}\ln(R)^{ {\bf 1}\{ \alpha  = \gamma  \}}.
\end{align*}

 { \begin{theorem} \label{thm:non-asymp}(Non-asymptotic result) Let $ d\geqslant 1.$ Let $  \alpha \geqslant 0.$ Let $\maxscale,\delta >0$  such that $\maxscale (d + 2) + \delta <\frac{ 1}{d}$. 
 \begin{itemize}
\item (i) Assume that there exists  a non-degenerate  function $ f\in \B_{ c}^{ }(B_{ 1})$ non-negative such that for some finite constant $ C_{ f}$,    for all $N\geqslant 1,R \in [1,N^{ \maxscale }]  ,$  
\begin{align}
\label{eq:ass-asymp-stat}
\sup_{ x\in \Sigma ^{ R }}\mathbf E  \left[\;
   \overline{\X_{ N}}(\tau _{ x}f_{ R  }) ^{ 2}
\right]\leqslant C _{ f}  R ^{ d  -\alpha }.
\end{align}
 Then   for $N\geqslant 1,\varepsilon _{ N}: = N^{ -\delta },$
  for $\gamma \in (0,1], \varphi \in \F_{ \gamma }(B_{ 1})$ we have  for all $ R \in [R^{ \circ}_{ f},N^{ \maxscale }],$   
\begin{align}
\label{eq:result-main-non-asymp}
\sup_{ x\in \Sigma ^{2\varepsilon _{ N}N^{ 1/d}  }} \overline{  \textrm{Var} }^{ \varepsilon _{ N}}\left[
{ \X_{ N}}(\tau_{ x }\varphi _{  R} )
\right]\leqslant C_{ \varphi ,f} V_{ \alpha ,\gamma }(R)\end{align}

where $ C_{ \varphi ,f},R^{ \circ}_{ f}<\infty $ only depend on $ \varphi ,f,d,C_{ f}.$


 \item (ii) {\bf Converse direction}. Assume $ \alpha \in (0,2).$  Let  $ f\in \C_{ c}^{ k}(  \mathbb  T  _{ 1}^{ d})$  for some $k>(d + 1)/2 $.
Assume that   there is $R _{ N}\leqslant N^{ \maxscale }$ with $ R_{ N}\to \infty $ such that,  with $ \varepsilon _{ N} = N^{ -\delta }$,
\begin{align}
\label{eq:limsup}
 \frac{  \overline{\textrm{Var}}^{ \varepsilon _{ N}}\left(\X_{ N}(f_{ R_{ N}})\right)}{V_{ \alpha ,1}(R_{ N})} \to \infty .
\end{align}
Then there is $ R_{ N}'\in [1,R_{ N}]$ such that
\begin{align}
\label{eq:liminf-main}  \frac{  \textrm{Var}\left(\X_{ N}(B(0,R_{ N}'))\right)}{ V_{ \alpha  ,1 }(R'_{ N})} \to \infty .
\end{align}

\end{itemize}
\end{theorem}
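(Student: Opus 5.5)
The plan follows the strategy outlined in the introduction. First prove a torus version of the transfer principle (Theorem \ref{thm:abstract-general}, assumed available from Section \ref{sec:prfs}). Then reduce the non-invariant system $\X_N$ to a stationarised periodic model on a cube of side $L:=N^{\maxscale'}$, with $\maxscale<\maxscale'$ and $\maxscale'$ still compatible with $\maxscale(d+2)+\delta<1/d$.

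\emph{Step 1 (periodisation).} Fix $x_0\in\Sigma^{2\varepsilon_N N^{1/d}}$. Restrict $\X_N$ to the cube $\tau_{x_0+u}\T[L]$, with $u$ uniform in $\T[\varepsilon_N N^{1/d}]$. Periodise this restriction to the torus $\mathbb R^d/L\mathbb Z^d$ and apply an independent uniform torus shift. The result is a translation-invariant system $\tilde\X$ on the torus. Its variance for a kernel $g$ supported in a ball of radius $R\ll L$ is the torus average of $\textrm{Var}(\X_N(\tau_{x_0+u+y}g))$, plus a bias term. The bias comes from fluctuations of the mean intensity $\mathbf E\,\X_N(\tau_{x}g)$ as $x$ varies.

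\emph{Step 2 (transfer assumption to $\tilde\X$).} Use \eqref{eq:ass-asymp-stat} to bound $\mathbf E[\overline{\tilde\X}(\tau_y f_R)^2]\le C R^{d-\alpha}$ for $R\le N^{\maxscale}$, uniformly in $y$. Kernels straddling the cube boundary must be handled separately. They form a fraction of order $R/L$ of positions, and each contributes at most a crude bound such as $\mathbf E[\X_N(\tau_x f_R)^2]\lesssim R^{2d}$, which again follows from the assumption. We also need a bound at large scales, $\mathbf E[\overline{\X_N}(\tau_x f_{L})^2]$. Covering $\tau_x \T[L]$ by $(L/R)^d$ translates of $f_R$ and applying Cauchy--Schwarz gives a bound of order $(L/R)^{2d}R^{d-\alpha}$. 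These powers of $N^{\maxscale}$ are what force the condition $\maxscale(d+2)+\delta<1/d$. The averaging region has volume $(\varepsilon_N N^{1/d})^d$, and it must dominate the boundary and bias contributions so that they are $O(R^{d-\alpha})$.

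\emph{Step 3 (apply torus theorem and go back).} Theorem \ref{thm:abstract-general} on $\tilde\X$ yields $\textrm{Var}(\tilde\X(\tau_y\varphi_R))\le C V_{\alpha,\gamma}(R)$. Spectrally, the low frequencies $|\xi|\lesssim 1/R$ are controlled by the smooth bound, since $\hat f\ge c>0$ near $0$ because $f\ge0$ is non-degenerate. The high frequencies are controlled by $|\hat\varphi(R\xi)|^2\lesssim (R|\xi|)^{-(d+\gamma)}$ together with translation-boundedness of the spectral measure, i.e. mass $\lesssim r^{d-\alpha}\cdot r^{-d}$ on shells at radius $r^{-1}$. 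Summing dyadically gives $R^{d-\min(\alpha,\gamma)}$, with a $\ln R$ factor when $\alpha=\gamma$. Undoing the periodisation identifies this torus variance with $\overline{\textrm{Var}}^{\varepsilon_N}$ up to the same boundary and bias errors. This gives \eqref{eq:result-main-non-asymp} for $R\ge R^\circ_f$, where $R^\circ_f$ is needed so that $\hat f$ dominates at the relevant frequencies.

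\emph{Step 4 (converse).} Argue by contrapositive. Suppose $\textrm{Var}(\X_N(B(0,r)))\le C V_{\alpha,1}(r)$ for all $r\le R_N$; translates are handled by the same averaging. Write $f_{R}=\int_0^{R}\!\!\int g(r,\cdot)\,\tau_y 1_{B_r}\,dy\,dr$ using a layer/ball decomposition (via the smoothness $k>(d+1)/2$, $f$ is an integral of ball indicators with integrable weights, e.g. through a Radon-type representation). Minkowski's inequality then gives $\overline{\textrm{Var}}^{\varepsilon_N}(\X_N(f_{R_N}))\lesssim V_{\alpha,1}(R_N)$, since $\alpha<2$ makes the weighted integral converge to the top scale. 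This contradicts \eqref{eq:limsup}.

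The main obstacle is Step 2: controlling boundary and bias terms of the periodisation using only the hypothesis at scales $\le N^{\maxscale}$. This is where the exponent bookkeeping must close.
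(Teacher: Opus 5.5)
Your architecture for (i) matches the paper's: stationarise a restriction of $\X_N$ on a torus, apply Theorem \ref{thm:abstract-general}, and pay for boundary positions with the crude bound $R^{2d}$ on a fraction $R/L$ of them. \textbf{However, the bookkeeping you set up cannot close under the stated hypothesis.} You require the torus model to satisfy the smooth bound with errors $O(R^{d-\alpha})$. Since the boundary layer costs $R^{2d+1}/L$, this forces $L\geqslant R^{d+1+\alpha}$. For $\alpha>1$ (e.g.\ $\alpha=2$ for Girko matrices or 2D Coulomb gases) this is not implied by $\maxscale(d+2)+\delta<1/d$.

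The missing idea is that the target is only $V_{\alpha,\gamma}(R)$ with $\gamma\leqslant 1$, so you may deliberately degrade the input. Fix $\alpha_0\in(\min(\alpha,\gamma),1+\eta)$, $\alpha_0\neq\gamma$, with $\eta>0$ small. Ask only for errors $O(R^{d-\alpha_0})$, and apply Theorem \ref{thm:abstract-general} with $\zeta(R)=cR^{-\min(\alpha,\alpha_0)}$; its output is still $O(V_{\alpha,\gamma}(R))$. The constraint becomes $\lambda\geqslant R^{d+1+\alpha_0}$, and this is exactly where $d+2$ comes from. The large-scale covering bound $(L/R)^{2d}R^{d-\alpha}$ you invoke plays no role: the torus theorem only uses scales $R\leqslant R_0$, and the atom of $\S$ at $0$ is already controlled by $\S(B_{1/R})$.

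Two smaller points in (i):
\begin{itemize}
\item Your random placement $u$ of a cube of side $L=N^{\maxscale'}$ inside the window makes you average against the convolution of two uniform laws. This differs from $\overline{\textrm{Var}}^{\varepsilon_N}$ on a layer of relative volume $L/(\varepsilon_NN^{1/d})$, costing an extra $R^{2d}L/(\varepsilon_NN^{1/d})$ and shrinking the admissible range. The paper avoids this by taking the torus to be the window itself, $\lambda=\varepsilon_NN^{1/d}$, with one uniform shift $U_\lambda$; tiling the window by $L$-cubes would also work.
\item You omit the normalisation $\mathbf E[\P_\lambda(\T)]=\lambda^d$ required by Theorem \ref{thm:abstract-general}. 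The paper shows this expectation is within $\lambda^d/2$ of $\lambda^d$ for large scales and rescales; that step is where $R^\circ_f$ enters.
\end{itemize}

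For (ii), your argument rests on the claim that every $f\in\C^k_c$ with $k>(d+1)/2$ is a superposition $\int\!\int g(r,y)1_{B(y,r)}\,dy\,dr$ of ball indicators, with weights integrable enough for Minkowski's inequality. In $d=1$ this is elementary: $f=\int f'(y)1_{[y,1]}\,dy$ for $f$ supported in $[-1,1]$. In $d\geqslant 2$ you give no construction, and none is standard:
\begin{itemize}
\item Radon-type inversions decompose $f$ into half-spaces, not balls.
\item Writing $f=h\ast K$ with $K$ a radial superposition of balls means dividing $\hat f$ by $\hat K$. This runs into the zeros of $\widehat{1_{B_r}}$, and since $\hat K$ is entire of exponential type, $h$ is in general not compactly supported. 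You would then need ball bounds at centres spread over all of $\mathbb R^d$.
\end{itemize}

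The paper avoids any such decomposition. Part (i) only requires the reference kernel to be bounded, non-negative and non-degenerate, so the paper reruns the proof of (i) with $1_{B_1}$ as reference kernel and $f$ as target. It takes $\zeta(R)=R^{-\min(\alpha,1)}$, or $R^{-1}\ln R$ if $\alpha=1$. It uses that $k>(d+1)/2$ gives $f\in\F_{\gamma}$ for some $\gamma>1$. Then $R^{-\gamma}+\sigma_{\zeta,\gamma}(R)=O(R^{-d}V_{\alpha,1}(R))$, which contradicts \eqref{eq:limsup}.
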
}
 
The proof is at Section  \ref{prf:main-non-asymp}.
We illustrate at Section \ref{sec:girko} this result with the  Girko random matrices eigenvalues in dimension $ d = 2$, for which previous results  \cite{CES} about the variance of smooth linear statistics provide a uniform bound, i.e. $ \alpha  =  2$. Hence the averaged number variance over a ball in the bulk $ B(x_{ 0},R)$ is in $ O(R) $, i.e. $
O  \left(
\mathbf E\left[
 \X_{ N}(B(x_{ 0} ,R ))
\right]
\right)
^{   \frac{ d-1}{d} } $, which is in general the optimal possible magnitude  whatever is the scaling  \cite{Beck87}.
We have the same result for Coulomb gases in $ \mathbb{R}^{ 2}$ thanks to the results of Bauerschmidt et al.  \cite{BauerschmiditBourgadeNikulaYau} or Serfaty  \cite{Serfaty-HU} about smooth linear statistics. The latter work also applies in dimension $ d = 3$, with the exponent $ \alpha  = 1,$ giving a  number variance bound  in $ R^{2 }\ln(R)$, corresponding to the optimal magnitude $ \asymp 
\left(
\mathbf E \left[
\X_{ N}(B(x_{ 0},R))
\right]
\right)
^{ \frac{ 3-1}{3}}$ multiplied by the $ \ln(R)$ term. Removing this extraneous $ \ln(R)$ term with this strategy would require  improving Serfaty's result to  $ \alpha  > 1 .$

\begin{remark}[Local averaging]
\label{rk:local-averag}
We stress that the local averaging $ \overline{\mathrm{Var}}^{\varepsilon}$ should not be interpreted as a  smoothing procedure, but rather as an artifact of the method, relying on translation invariance.    If the averaging procedure was equivalent to working with a smoother kernel, then the optimal variance rate would be better, which it is not.  Proving this claim rigourously requires first to have a lower bound in $ V_{ \alpha ,\gamma }(R)$ for some $ \gamma <\alpha $ for a non-invariant model, with an irregular kernel (at least less regular than a ball indicator), which is in general hard to find, and then prove that the translation invariant counterpart does not have a smaller variance magnitude. 

We can still give a concrete example with the Gaussian unitary ensemble (GUE) in dimension $ d = 1$. Let $ \varphi (x) = \chi (x)\|x\|^{ -a}$ where $ a\in (0,1/2)$ and $ \chi \in \C_{ c}^{ \infty }.$ It can be shown that the supremum of regularity indexes $ \gamma $ such that $ \varphi \in \F_{ \gamma }$ is $ \gamma_{ a}  =   1-2a$ (see Section  \ref{sec:periodic}, in particular the proof of  Theorem \ref{thm:main-periodic}). By using the exact determinantal expression of the correlation function, the variance of the corresponding linear statistic behaves in $ N^{ 2a } = N^{ d-\gamma_{ a} }$.
The translation invariant counterpart model of the GUE is the infinite $ \text{\rm{Sine}}_{ 1}$-DPP on the real line, or the periodic one-dimensional $ \beta $-ensemble or log gas with $ \beta  = 1$ on the torus, which are also known to be $ 1$-hyperuniform (logarithmic variance for ball indicators). It hence gives by Theorem \ref{thm:main-periodic} or  \cite[Proposition 2.2]{survey-hu}  a linear statistic variance in $ N^{ 2a}$ for $ \varphi $. In particular, the variance averaging provided by the translation invariance does not provide additional regularity in terms of linear statistics variance. This method is likely reproducible with most DPPs.

\end{remark}

  {\begin{remark}\label{rk:local-averag}
We emphasize that the local averaging 
\(\overline{\mathrm{Var}}^{\varepsilon}\) should not be interpreted as a
smoothing procedure. Rather, it is an artifact of the method, which relies
essentially on translation invariance. Indeed, if this averaging were
equivalent to working with a smoother kernel, then one would obtain an
improved optimal variance rate; this is not the case. The best way to see it is with a translation invariant model on the torus. The translation invariance provides also an averaging, but it does not improve the variance rate.

A rigorous proof of this assertion is difficult as it would require, first, establishing a
lower bound in \(V_{\alpha,\gamma}(R)\), for some \(\gamma<\alpha\), in a
non-translation-invariant model with an irregular kernel, at least less
regular than a ball indicator.   One would then have to prove that passing to the 
translation-invariant counterpart model does not decrease the order of magnitude of the
variance.

Let us nevertheless give a concrete illustration in the case of the Gaussian
unitary ensemble (GUE) in dimension \(d=1\). Let
\[
    \varphi(x)=\chi(x)\|x\|^{-a},
    \qquad a\in(0,1/2),
    \qquad \chi\in C_c^\infty .
\]
It can be shown that the supremum of the regularity indices \(\gamma\) such
that \(\varphi\in \mathcal F_\gamma\) is
$
    \gamma_a = 1-2a ;
 $
see Section~\ref{sec:periodic}, in particular the proof of
Theorem~\ref{thm:main-periodic}. Using the exact determinantal expression
for the correlation functions, one finds that the variance of the
corresponding linear statistic is of order
 $
    N^{2a}=N^{d-\gamma_a}.
 $

The translation-invariant counterparts of the GUE are the infinite
\(\mathrm{Sine}_1\)-DPP on the real line, or   the periodic
one-dimensional \(\beta\)-ensemble on the torus, also called the log gas, with
\(\beta=1\). These processes are known to be
\(1\)-hyperuniform, with logarithmic variance growth for ball indicators.
Therefore, by Theorem~\ref{thm:main-periodic}, or by
\cite[Proposition~2.2]{survey-hu}, the variance of the linear statistic
associated with the above test function \(\varphi\) is again of order
\(N^{2a}\).

Thus, the variance averaging made possible by translation invariance does
not provide any additional regularity at the level of linear-statistic
variance. The same type of argument should apply to many other
determinantal point processes.
\end{remark}}

  \begin{remark}[Possible extensions] ~\begin{itemize}
\item The proof requires a general preliminary crude bound $  \textrm{Var}\left(\X_{ N}(g _{ R})\right)^{ 2} = O(R^{ 2d})$ valid for discontinuous  $ g\in \B_{ c}^{}(\mathbb{R}^{ d})$ to deal with some boundary effects. This step is the source of the reduced scales range, and it is not present in the ideal periodic case, where the scales are not truncated, see Section  \ref{sec:periodic}. Improving this bound requires a model dependent strategy, and would allow to weaken the constraint $\maxscale (d + 2)<\frac{ 1}{d}$ and extend the available scales $ [1,N^{  \frac{ 1}{d(d + 2)}}]$. 
\item It could also be possible to choose $ \gamma >1$, but  it leads to further reducing the scales. More generally, the available scales would depend on $ \gamma $, the current setup is adapted to ball indicators and less regular kernels with $ \gamma  \in (0,1]$.
\item The converse statement (ii) can be formulated for other functions than the ball indicator and elsewhere than in $ 0.$
\item The proof applies without modification to general non-negative random Radon measures under the same assumptions, even if they are not atomic.
\end{itemize}  
  \end{remark}
    As we show in   Theorem  \ref{thm:main-asymp} below, it implies that for $ \Leb$-a.a. $ x\in \mathbb{R}^{ d}$, asymptotically the number  variance has magnitude $O (V_{ \alpha ,\gamma }(R))$.

  \subsection{Asymptotic result}  
  \label{sec:asymp}

Let $ \N $ be the  {\it configurations space}, i.e. the class of locally finite sets of $ \mathbb{R}^{ d}$.
Many sequences of particle systems $ \{\X_{ N},N\geqslant 1\}$ are expected to be tight, meaning that   some subsequence    converges  in law to some point process $ \P$ in $ \N$, called  {\it a limit point}. A common framework is to work with  the vague topology $ \xrightarrow[  ]{v}$ generated by  linear statistics with kernel from $ \C_{ c}^{ 0}(\mathbb{R}^{ d})$. The the characterisation of weak convergence  is the following: we have  $$  \X  _{ N}\xrightarrow[ N\to \infty ]{v} \P \text{\rm{ in law if }}  \X  _{ N}(f)\to \P (f)\text{\rm{ in law for each }} f\in \C_{ c}^{ 0}(\mathbb{R}^{ d}).$$  
If the average number of points in any given fixed ball   is uniformly bounded over $ N$, the sequence of laws $ \{\X_{ N} ;N\geqslant 1\}$ is tight by  \cite[Lemma 14.15]{kallenberg2002foundations}. This yields the existence of  a limit point  $\P = \lim_{N_{ j}}{  \X _{ N_{ j}}}$ under local first order bounds for some subsequence $ N_{ j}\to \infty $.

So as to delocalise the centering, we choose a   point $ x_{ N}\in \Sigma $, and consider the rescaled recentred process $\P_{ N} =  \tau _{- x_{ N}}\X_{ N}$, in this way the behaviour of $ \X_{ N}$ at any location of $ \Sigma $ can be influential.
For long-range interactions models, the non-uniqueness of the limit $ \P $ is often expected, but in general very hard to prove. It is also expected that ``most'' limit points should be invariant under $ \mathbb{R}^{d}$-translations, but  by no means  automatic.   We must assume that $ \varphi $ is   $ \Leb$-a.e. continuous, to make sure that a.s. no point of $ \P$ falls on a discontinuity point of $ \varphi ,$ this is not a strong requirement for a function of $ \F_{ \gamma },$ where $ \gamma >0.$

\begin{theorem}
\label{thm:main-asymp}Let  $ x_{ N}\in  \Sigma $ such that  $ d(x_{ N},\partial  \Sigma ) \to \infty $,    and a limit point $ \P = \lim_{ N_{ j}\to \infty } \tau _{- x_{ N_{ j}}} \X _{ N_{ j}}$.   

 Let $ \alpha \geqslant 0.$ Let $ R_{ N}\to \infty .$ Assume that for some non-negative  non-degenerate $ f\in \B_{ c}^{}(B_{ 1})$ there is $ C_{ f}$ such that  for all $ N\geqslant 1,R\in [1,R_{ N}]  $,
\begin{align*}
\sup_{ x\in \Sigma ^{ R}} \mathbf E \left[\;\overline{\X_{ N}}(\tau _{ x}f_{ R })^{ 2}
\right]\leqslant C_{ f}R^{ d-\alpha }.
\end{align*}
Then for $\gamma >0,\varphi \in \F_{ \gamma }$ continuous $ \Leb$-a.e.,   $R\geqslant R_{ f}^{\circ }, L \geqslant  { R^{ d + 2}},$  
\begin{align}
\label{eq:concl-asymp-main}
\sup_{ x_{ 0}\in \mathbb{R}^{ d}}\displaystyle\int_{ [-L/2,L/2 ]^{ d}}  \textrm{Var}\left(\P(\tau _{ x_{ 0} + x}\varphi _{ R})\right)\frac{ dx}{L^{ d}}\leqslant C_{ \varphi,f }V_{ \alpha ,\gamma }(R)
 \end{align}
 where $ R^\circ _{ f},C_{ \varphi ,f}$ depend explicitly   on $ f,\varphi ,d,C_{ f}$. 
 
If $ \P$ is furthermore assumed to be stationary,  we have the result for all $   x\in \mathbb{R}^{ d}, R\geqslant R_{ f}^{\circ }$ 
\begin{align}
\label{eq:concl-asymp-main-statio}
  \textrm{Var}\left(\P(\tau _{ x}\varphi _{ R})\right)\leqslant C_{ \varphi,f }V_{ \alpha ,\gamma }(R),
\end{align}
which means that $ \P$ is $ \alpha $-hyperuniform.
 \end{theorem}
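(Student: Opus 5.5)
The plan is to derive Theorem \ref{thm:main-asymp} from the finite-volume machinery behind Theorem \ref{thm:non-asymp}, by passing to the limit $N_j\to\infty$. The key observation is that the averaged variance appearing in \eqref{eq:concl-asymp-main} is an average over a window of fixed size $L$. It can therefore be recovered from the finite systems $\P_{N_j}=\tau_{-x_{N_j}}\X_{N_j}$ once $N_j$ is so large that $L\leqslant \varepsilon_{N_j}N_j^{1/d}$ and $R\leqslant R_{N_j}$. As a result, the scale-range restriction $R\leqslant N^{\maxscale}$ disappears in the limit. The only constraint that survives is the relation $L\geqslant R^{d+2}$ between the averaging window and the scale. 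This relation is exactly what the boundary-term analysis requires: crude $O(R^{2d})$ bounds on the boundary of the window must be negligible after dividing by $L^d$.

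\textbf{Step 1: finite-volume bound.} Redo the argument of Theorem \ref{thm:non-asymp}(i) with a window of sidelength $L$ in place of $\varepsilon_NN^{1/d}$. For $N=N_j$ large, the cube $x_{N}+x_0+\T[L]$ lies deep in the bulk, since $d(x_N,\partial\Sigma)\to\infty$; this holds at least for $x_0$ in any fixed compact set. The argument restricts $\P_N$ to this cube and compares the restriction with its stationarised (periodised) version on the torus $\mathbb{R}^d/L\mathbb{Z}^d$. It then applies the periodic transfer principle, Theorem \ref{thm:abstract-general}, using hypothesis \eqref{eq:ass-asymp-stat} at scales $R\leqslant R_N$, and finally transfers back. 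The boundary error is $O(R^{2d}\cdot R L^{d-1}/L^{d})$ or similar, which the condition $L\geqslant R^{d+2}$ makes $O(R^{d-1})\leqslant O(V_{\alpha,\gamma}(R))$. The outcome is the bound
\[
\int_{\T[L]}\textrm{Var}\left(\P_{N_j}(\tau_{x_0+x}\varphi_R)\right)\frac{dx}{L^d}\leqslant C_{\varphi,f}V_{\alpha,\gamma}(R),
\]
uniformly in $j$ large and in $x_0$. Here $\gamma>1$ is handled by using $\F_\gamma\subset\F_1$ when $\alpha\leqslant1$; if $\alpha>1$ the rate is capped at $\min(\alpha,\gamma)$ anyway. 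For $\gamma<1$ the computation is unchanged.

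\textbf{Step 2: passage to the limit.} Since $\varphi$ is continuous $\Leb$-a.e. and $\P$ has locally finite intensity, $\P(\partial\text{-discontinuities})=0$ a.s. Hence $\P_{N_j}(\tau_y\varphi_R)\to\P(\tau_y\varphi_R)$ in law for each $y$. Fatou's lemma, applied in $x$ and to second moments via Skorokhod representation, gives $\mathbf E[\P(\cdot)^2]\leqslant\liminf_j\mathbf E[\P_{N_j}(\cdot)^2]$. The mean also converges, because the uniform second-moment bounds from Step 1 give uniform integrability. Together these yield $\textrm{Var}(\P(\cdot))\leqslant\liminf_j \textrm{Var}(\P_{N_j}(\cdot))$, and integrating over $x$ gives \eqref{eq:concl-asymp-main}. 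The supremum over all $x_0\in\mathbb{R}^d$ is fine because each fixed $x_0$ eventually lies in the bulk. For stationary $\P$, the integrand in \eqref{eq:concl-asymp-main} is constant in $x$, which gives \eqref{eq:concl-asymp-main-statio} directly.

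\textbf{Main obstacle.} The hardest step is Step 1: extracting from the proof of Theorem \ref{thm:non-asymp} the explicit boundary estimate in terms of $L$ and $R$ alone, without reference to $N$, so that the condition $L\geqslant R^{d+2}$ suffices. The crude bound $\textrm{Var}(\X_N(g_R))=O(R^{2d})$ must hold uniformly in $N$ via \eqref{eq:ass-asymp-stat}. I would try first to cover $g_R$ by $O(1)$ translates of $f_{cR}$, using that $f$ is non-degenerate and non-negative.
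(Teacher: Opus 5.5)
Your proposal follows the paper's proof essentially step for step. The paper applies the finite-volume bound \eqref{eq:final-main} with $R_N=R$ fixed and $\varepsilon_N=LN^{-1/d}$, so the torus has side $\lambda=L$, with the crude $O(R^{2d})$ bound of Lemma \ref{lm:brute-square} controlling the seam. It then passes to the limit using $\P$-continuity of translates of $\varphi_R$ (Lemma \ref{lm:P-continuous}), lower semicontinuity of the variance (Lemma \ref{lm:5.6}), with uniform integrability supplied by that same crude second-moment bound, and Fatou in $x$. The stationary case is read off from constancy of the integrand.

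Two corrections. First, $\P_{N_j}(\tau_y\varphi_R)\to\P(\tau_y\varphi_R)$ in law is only guaranteed for $\Leb$-a.e.\ $y$, not for each $y$. The argument is Fubini against the intensity of $\P$, which need not be absolutely continuous (think of a lattice-like limit). This is all the integrated statement needs. Second, your patch for $\gamma>1$ does not work. When $\min(\alpha,\gamma)>1$, the seam error $R^{2d+1}/L\leqslant R^{d-1}$ is not $O(V_{\alpha,\gamma}(R))$. This error also degrades the smooth-kernel input $\textrm{Var}(\P_\lambda(f_R))$ (the second inequality of the paper's comparison lemma), not only the final transfer back. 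The argument genuinely needs $L\geqslant R^{d+1+\alpha_0}$ with $\alpha_0>\min(\alpha,\gamma)$ and $\alpha_0\neq\gamma$, as in the general framework of the proof of Theorem \ref{thm:non-asymp}. This limitation is shared by the paper's own proof: with only $L\geqslant R^{d+2}$, the averaged bound is not established when $\min(\alpha,\gamma)>1$. In the stationary case this is harmless, since the bound holds for every sufficiently large $L$ and you may take $L$ as large as needed.
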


 The proof is at Section  \ref{sec:prf-asymp}. We apply this result here to Coulomb systems, a.k.a. one component plasmas. In dimension $ d = 2$, for any $ \beta  >0$,  \cite{Leble-stationary} proved that any limit point $ \P$ is stationary. Hence, the previous result allows us to derive at Section  \ref{sec:coul-infinite} that $ \P$ has uniformly bounded number variance over balls, i.e. $ \P$ is $ 2$-hyperuniform, which implies other properties such as finite Coulomb energy or finite Wasserstein distance to the Lebesgue measure, which were not available yet, or number rigidity, already proved in \cite{Leble-stationary}; see Theorem  \ref{thm:spectral-2D-Coul}. For $ d = 3$, it gives $ 1$-hyperuniformity for stationary limit points, the first hyperuniformity result in dimension $ 3$ or more for such models; see also the work of Chatterjee \cite{ChaHU} for the hierarchical Coulomb gas.

 \subsection{Periodic systems and kernel singularities}
 \label{sec:periodic}
 
 The fundamental result of this article, from which are derived previous results, is Theorem \ref{thm:abstract-general}, adapted to finite periodic systems on the torus. In this setting, the restricted scales range and local averaging caveats are removed. Here we give a   more practical result that can deal with   kernels having  some square integrable singularities. 
 Let $ N\geqslant 1,\P_{N  }$ a random measure on the torus $ \T[N] $ endowed with the torus topology and metric. Let $ \tau _{ x}^{N  }$ the $ N  $-periodic translation operator on $ \T[N]$. Say that $ \P_{N  }$ is $ \T[N]$-stationary, or invariant under toroidal translations, if for $ x\in \T[N],$ $$\tau _{ x}^{N  }\P_{N  } \stackrel{(d)}{=}  \P_{N  }.$$ 
 
  For a function $ \varphi \in L^{ 1}(\mathbb{R}^{ d})$ defined possibly outside the torus, extend the definition of linear statistics with 
\begin{align}
\label{eq:extend-statslins-torus}
 \P_{N  }(\varphi ) = \sum_{\k\in \mathbb{Z} ^{ d}}\displaystyle\int_{\T[N]}\varphi (N  \k + x)\P_{N  }(dx).
\end{align}
Note that this is well defined a.s. when $ \P_{N  }$ has Lebesgue intensity, because 
\begin{align*}
 \mathbf E \left[
\P_{N  }( | \varphi  | )
\right] = \sum_{\k}\displaystyle\int_{\T[N]} | \varphi (N  \k + x) | dx = \displaystyle\int_{} | \varphi  | <\infty .
\end{align*}

  Let $a\in (0,d/2), \gamma  = d-2a$. We prove below that such functions satisfy as in the definition of $ \F_{ \gamma }$
\begin{align*}
  | \hat \varphi (\xi ) |  = O((1 + \|\xi \|)^{ -\frac{ d + \gamma }{2}}).
\end{align*}
Call $ \F_{ \gamma }^{*}$ the vector space generated by  $ \F_{ \gamma }(\mathbb{R}^{ d})$ and functions of the form $ \varphi (x) = \|x-x_{0}\|^{ -a }h (x)$ where $x_{0}\in \mathbb{T}_{1}^{d}, h\in \C_{ c}^{ \infty }(  B _{1/2} )$ is non-negative; and $ \F^{ *}_{ \gamma } :=\F_{ \gamma } $ for $ \gamma \geqslant d$.  
 
 \begin{theorem}
 \label{thm:main-periodic}
  Let $ \P_{N  }$ a stationary point process on $ \T[N]$,   
with $ \mathbf E\left[
 \#\P_{N  }
\right] = N ^{ d}$, and let $ \alpha \in \mathbb{R},\maxscale \in [0,1/d].$ Assume for some $ N  \geqslant 1$ that for some non-negative $ f\in \C_{ c}^{ \infty }(\mathbb{R}^{ d})  \setminus \{0\}$   there is $ R^{ \circ}\geqslant 1,C_{ f}>0$ such that for $ R\in [ R^{ \circ},N^{ \maxscale }]$
\begin{align*}
  \textrm{Var}\left(\P_{N  }(f_{ R})\right)\leqslant C_{ f}R^{d -\alpha }.
\end{align*}
Then for $ \gamma >0,\varphi \in \F_{ \gamma }^{ *},$  there is $ C_{ \varphi ,f} >0$ such that for $R\in [ R^{ \circ},N^{ \maxscale }]$
\begin{align*}
  \textrm{Var}\left(\P_{N  }(\varphi _{ R})\right)\leqslant
  C_{ \varphi,f}V_{ \alpha ,\gamma }(R).
\end{align*}

  \end{theorem}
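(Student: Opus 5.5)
The proof is spectral, on the dual lattice $\mathbb{Z}^d/N$ of the torus $\T[N]$. Since $\P_N$ is $\T[N]$-stationary with intensity one, it has a finite-volume structure factor. This is a non-negative measure $S_N$ on $N^{-1}2\pi\mathbb{Z}^d$, given by $S_N(\k)=N^{-d}\mathrm{Var}(\hat\P_N(2\pi\k/N))$ for $\k\neq 0$, with the zero mode removed. For any $\varphi\in L^1\cap L^2$, the periodised statistic \eqref{eq:extend-statslins-torus} then satisfies
\begin{align*}
\mathrm{Var}(\P_N(\varphi_R)) = N^{-d}\sum_{\k\in\mathbb{Z}^d\setminus\{0\}} |\widehat{\varphi_R}(2\pi\k/N)|^2\,S_N(\k),\qquad \widehat{\varphi_R}(\xi)=R^d\hat\varphi(R\xi).
\end{align*}
Write $\mu_N=N^{-d}\sum_{\k\neq0} S_N(\k)\delta_{2\pi\k/N}$. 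Everything then reduces to comparing two sums over $\mu_N$: one with weight $R^{2d}|\hat f(R\xi)|^2$, which is controlled by hypothesis, and one with weight $R^{2d}|\hat\varphi(R\xi)|^2$, which is the target.

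\textbf{Step 1 (information extracted from $f$).} Because $f\geqslant0$ is nonzero and smooth, $\hat f(0)=\int f>0$, so $|\hat f|\geqslant c$ on a ball $B(0,r_0)$. The hypothesis then gives
\begin{align*}
\mu_N(B(0,r_0/R))\leqslant C R^{-d-\alpha}\quad\text{for } R\in[R^\circ,N^{\maxscale}].
\end{align*}
Translation boundedness is also needed. The hypothesis evaluated at scale $R^\circ$ gives $\mu_N(B(0,r_0/R^\circ))\leqslant C$. Applying it to modulated kernels is not available, so for $|\xi|$ large I would instead use the trivial bound $\mathrm{Var}(\P_N(g))\leqslant \mathbf E[\P_N(|g|)^2]$ with a smooth bump $g$ modulated by $e^{i\xi\cdot x}$. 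By stationarity, $\mathbf E[\P_N(g)^2]$ depends only on the bump. This yields the uniform local bound $\sup_{\xi}\mu_N(B(\xi,1))\leqslant C'$.

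I expect this translation boundedness to be the main obstacle. The difficulty is obtaining a bound uniform in $N$ for a crude local second moment $\mathbf E[\P_N(B_1)^2]$. I would get it from the hypothesis at $R=R^\circ$: since $f$ is non-degenerate, $f_{R^\circ}$ dominates a multiple of a unit-ball indicator after translation, and stationarity plus the bounded mean then control this second moment.

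\textbf{Step 2 (dyadic decomposition).} Split the sum at frequencies $|\xi|\leqslant r_0/R$ and over the dyadic shells $2^{j}r_0/R<|\xi|\leqslant 2^{j+1}r_0/R$ for $j\geqslant0$. On the low-frequency ball, $|\hat\varphi|$ is bounded, so the contribution is at most $CR^{2d}R^{-d-\alpha}=CR^{d-\alpha}$. On shell $j$, use $|\hat\varphi(R\xi)|^2\lesssim 2^{-j(d+\gamma)}$. Bound the shell mass by the hypothesis at scale $R2^{-j-1}$ as long as that scale is at least $R^\circ$, giving $\mu_N(\text{shell}_j)\lesssim (R2^{-j})^{-d-\alpha}$. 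Once $R2^{-j}<R^\circ$, use translation boundedness instead, which gives $\lesssim (2^j/R)^d$ by covering the shell with unit balls. Summing,
\begin{align*}
R^{2d}\sum_{j} 2^{-j(d+\gamma)}(R2^{-j})^{-d-\alpha} = R^{d-\alpha}\sum_{2^j\leqslant R} 2^{j(\alpha-\gamma)},
\end{align*}
which is $\lesssim R^{d-\min(\alpha,\gamma)}$, with an extra $\ln R$ factor when $\alpha=\gamma$. The tail terms give $R^{d}\sum_{2^j>R}2^{-j\gamma}\lesssim R^{d-\gamma}$. Together these produce exactly $V_{\alpha,\gamma}(R)$. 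For $R\leqslant N^{\maxscale}\leqslant N$, the scales $R2^{-j}$ stay in $[R^\circ,N^{\maxscale}]$, so the hypothesis is always used within its range.

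\textbf{Step 3 (the class $\F^*_\gamma$).} By linearity and Minkowski's inequality for standard deviations, it suffices to treat each generator separately. For $\varphi(x)=\|x-x_0\|^{-a}h(x)$ with $a<d/2$, write $\hat\varphi=\widehat{\|\cdot\|^{-a}}*\hat h$ up to a modulation. Since $\widehat{\|\cdot\|^{-a}}=c\|\xi\|^{a-d}$ and $\hat h$ is Schwartz, this gives $|\hat\varphi(\xi)|\lesssim(1+\|\xi\|)^{a-d}=(1+\|\xi\|)^{-(d+\gamma)/2}$ with $\gamma=d-2a$. Moreover $\varphi\in L^1\cap L^2$, so Step 2 applies verbatim; it used only the Fourier decay and integrability, not compact support or boundedness. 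The case $\gamma\geqslant d$ is already covered by Step 2.
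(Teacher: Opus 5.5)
Your treatment of $\varphi\in\F_\gamma$ is essentially the paper's proof of Theorem~\ref{thm:abstract-general}. The hypothesis at intermediate scales controls small balls around the origin. Translation boundedness comes from modulating a non-negative bump and using $\mathrm{Var}\leqslant\mathbf E[\P_N(|g|)^2]$, exactly as in Lemma~\ref{lm:trans-bound}; this takes two lines, so it is not the main obstacle. Your dyadic shells are equivalent to the paper's layer-cake split at $t=R^{-(d+\gamma)}$.

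There is one slip in the setup: you should not discard the zero mode. The theorem only assumes $\mathbf E[\#\P_N]=N^d$, so $\#\P_N$ may be random. The atom $\mu_N(\{0\})=N^{-2d}\mathrm{Var}(\#\P_N)$ then contributes $R^{2d}|\hat\varphi(0)|^2\mu_N(\{0\})$ to $\mathrm{Var}(\P_N(\varphi_R))$, and without it your identity undercounts the variance. The repair costs nothing: $0\in B(0,r_0/R)$, so Step~1 already bounds this atom by $CR^{-d-\alpha}$.

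The genuine gap is in Step~3. Your claim that Step~2 ``applies verbatim'' to $\varphi(x)=\|x-x_0\|^{-a}h(x)$ rests on the spectral identity for all $\varphi\in L^1\cap L^2$, which you assert but never justify. The representation (Proposition~\ref{lm:spectral}, Lemma~\ref{lm:scaling}) is only available for bounded test functions, whereas $\varphi_R$ is unbounded. A priori you do not even know that $\P_N(\varphi_R)\in L^2(\Omega)$, and Fourier decay of $\hat\varphi$ alone says nothing about this. This step is precisely where the paper's proof of this theorem does its work:
\begin{itemize}
\item It mollifies: $\varphi_q=\chi_q*\varphi$ is bounded and satisfies $|\widehat{\varphi_q}|\leqslant|\hat\varphi|\leqslant C(1+\|\xi\|)^{a-d}$ uniformly in $q$.
\item The $\F_\gamma$ case then gives $\mathrm{Var}(\P_N(\varphi_{q,R}))\leqslant CV_{\alpha,\gamma}(R)$ uniformly in $q$.
\item Next, $\varphi_q\to\varphi$ a.e., with $0\leqslant\varphi_q\leqslant g$ for a fixed integrable $g$. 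Since the intensity is Lebesgue, $\P_N(\varphi_{q,R})\to\P_N(\varphi_R)$ a.s.
\item Finally, $\int\varphi_{q,R}=\int\varphi_R$, and Fatou's lemma transfers the second-moment bound to $\varphi_R$.
\end{itemize}

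Alternatively, you can prove your $L^2$ identity directly by density:
\begin{itemize}
\item By stationarity, the second factorial moment measure of $\P_N$ is $dx\otimes\mathcal K$, with $\mathcal K(\T[N])=N^{-d}\mathbf E[\#\P_N(\#\P_N-1)]$. This is finite because the hypothesis, combined with stationarity and a covering argument, forces $\mathbf E[(\#\P_N)^2]<\infty$.
\item Cauchy--Schwarz then gives $\mathbf E[\P_N(|g|)^2]\leqslant(1+\mathcal K(\T[N]))\|g\|_{L^2(\T[N])}^2$ for $g$ on the torus.
\item The atoms of $\mu_N$ are bounded by $N^{-2d}\mathbf E[(\#\P_N)^2]$, so by Parseval the spectral side is also continuous on $L^2(\T[N])$.
\item The identity therefore extends from bounded functions to $L^2(\T[N])$.
\end{itemize}
With either argument supplied, and the zero mode restored, your proof goes through.
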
    
 
 A low value for $ C_{ \varphi,f }$ can by retrieved from the  constants of  Theorem \ref{thm:abstract-general}.\\
 
 One can get a converse statement, as in Theorem \ref{thm:non-asymp}-(ii), that the variance of an irregular linear statistic cannot be negligible  with respect to that of a regular linear statistic, by using the same strategy.
 
 \begin{proof}
 By stationarity, $ \P_{N  }$ has intensity   $ \Leb$ on $ \T[N]$, i.e. for $ f\in \B_{ c}^{}(\T[N])$, $$ \mathbf E \left[
\P_{N  }(f)
\right] =  \Leb(f).$$ 
 For $ \varphi \in \F_{ \gamma }$, the result is a direct consequence of Theorem \ref{thm:abstract-general} (with $ N = \lambda $) in the case where $ \zeta (t) = t^{ -\alpha }$, using the computation at  \eqref{eq:power-sigma-xi}.   
By the general variance inequality $  \textrm{Var}\left(U+V\right)\leqslant  2\textrm{Var}\left(U\right)+  2\textrm{Var}\left(V\right)$, it is therefore enough to treat the case of a function $$ \varphi (x)=\|x-x_{0}\|^{-a }h (x)$$ where $ h\in \C_{ c}^{ \infty }(B_{ 1/2})$.  By translation invariance, assume without loss of generality $ x_{0}=0.$ The strategy consists in approximating $ \varphi $ with  functions from $ \F_{ \gamma }$.

Take a non-negative  mollifier $ \chi \in \C_{ c}^{ \infty }(B_{ 1/2})$ radially symmetric such that $ \int_{}\chi =1$.
  Let $ \chi _{ q}(x) = q^{ d}\chi (qx),q\geqslant 1$. 
 Let $ \varphi _{ q}  = \chi _{ q} \ast \varphi \in \B_{ c}^{}(B_{ 1})$. 
 We have\begin{align*}
 \|\varphi _{ q}\|_{L^{ 1}(\mathbb{R}^{ d})} = \|\varphi \|_{ L^{ 1}(\mathbb{R}^{ d})}  <\infty .\end{align*}Below, $ C_{ \varphi }$ is a constant depending on $ \varphi ,d,a,h,\chi $ which value might change from line to line. For $ x\in B_{ 1/2}  \setminus \{0\}$, 
\begin{align*}
 \varphi _{ q}(x) = & \int_{ B(0,1/q)} q^{ d}\chi (qy)\varphi (x + y)dy\\
 \leqslant & C_{ \varphi }\int_{ B(0,1/q)} q^{ d}\chi (qy)\|x + y\|^{ -a }dy\\
 \leqslant & C_{ \varphi }q^{ d}\displaystyle\int_{ B(0,1/q)} \|x + y\|^{ -a }dy\\
 \leqslant &  
 C_{ \varphi }q^{ d}\begin{cases} 
 \displaystyle\int_{ B(0,1/q)} (\|x\|/2)^{ -a }dy$  if $1/q<\|x\|/2\\
\displaystyle \sup_{ x_{ 0}}\displaystyle\int_{B(0,1/q)}\|x_{ 0} + y\|^{ -a}dy = \int_{ B(0,1/q)}\|y\|^{ -a }dy$   if $q\leqslant 2\|x\|^{ -1}
  \end{cases} \\
  \leqslant & C_{ \varphi } q^{ d}  
  \begin{cases} 
  \|x\|^{ -a }q^{ -d}\\
  \displaystyle\int_{0}^{ 1/q}r^{ -a  + d-1}dr
   \end{cases}\\
   \leqslant & C_{ \varphi } 
   \begin{cases} 
 \|x\|^{ -a }\\
 q^{ d}q^{ a -d}$  if $q<c\|x\|^{ -1}
    \end{cases} 
    \leqslant C_{ \varphi }\|x\|^{ -a }.
\end{align*}
We can dominate $ \varphi _{q}$ by $ C_{ \varphi }\|x\|^{ -a}$ on $ B_{ 1/2}$,   hence there is $ g$ integrable with compact support such that $  | \varphi _{ q} | \leqslant g$ a.e.. Also, $ \mathbf E \left[
\P_{N  }( | g | ) 
\right] =  \int_{ } | g | <\infty $, and $ \P_{N  }( | g | )<\infty $ a.s..

Clearly, $ \varphi _{ q}\to \varphi $ $ \Leb$-a.e., with the domination $ 0\leqslant \varphi _{ q}\leqslant g$, hence $ \P_{N  }(\varphi _{ q})\leqslant \P_{N  }(g)$ a.s.. The set $ E$ where there is no convergence is $ \Leb$-negligible, hence satisfies $ \mathbf E[ \P_{N  }(E)] \leqslant N   \Leb(E) = 0$. Then Lebesgue theorem  yields, by restricting to $ \T  \setminus  E$ (that has a.s. all points  from $ \P_{N  }$)\begin{itemize}
\item $ \P_{N  }(\varphi _{ q})\to \P_{N  }(\varphi )$ a.s.  
\item $ \int_{ }\varphi _{ q}\to \int_{ }\varphi $
\item $\displaystyle\int_{}\varphi _{ q} = \mathbf E \P_{N  }(\varphi _{ q})\to \mathbf E \P_{N  }(\varphi )$ hence $ \mathbf E \P_{N  }(\varphi ) =  \int_{ }\varphi<\infty  $
\end{itemize} 
and the same holds for $ \varphi _{ R}$  and $\varphi _{ q,R}: = \varphi _{ q}(./R)$ for fixed $ R>0$.
 
Let us prove that the $ \varphi _{ q}$ are ``uniformly'' in $ \F_{ \gamma }$.
Regarding Fourier bounds, the Fourier transform of $ \psi(x): = \|x\|^{ -a }$ in the sense of distributions on $ \mathbb{R}^{ d}$  is a function of the form $ \hat \psi (\xi) = c_{ a,d} \|\xi\|^{ a-d}$. A formal proof can be found for instance at  \cite[Thm 4.1]{SteinWeiss}, but we give a heuristic argument exploiting the  scaling property $ \psi _{ R} = R^{ a }\psi$. For a Schwartz test function $ g $, 
\begin{align*}
\langle  \widehat{  \psi _{ R}},g \rangle  = \langle \psi _{ R}, \hat g \rangle  = R^{ a } \langle \psi , \hat g  \rangle = R^{ a } \langle \hat \psi ,g  \rangle
\end{align*}
and also $ \widehat{ \psi _{ R}} = R^{ d} \hat \psi (R\cdot )$ by standard considerations on Fourier transforms of tempered distributions. We hence prove that $ \hat \psi $ is homogeneous with degree $ a-d $: 
\begin{align*}
 \hat \psi (R\;\cdot ) = R^{ a-d} \hat \psi .
\end{align*}
Since $ \psi $ is rotationally invariant, so is $ \hat \psi ,$
which justifies   the form $   c\|\xi\|^{ a-d}$.

Recall that $ \varphi =\psi   h $ with  $ h\in \C_{ c}^{ \infty }(\mathbb{R}^{ d})$, in particular $ \hat h $ is a Schwartz function and has rapid decay. Hence $  \| \hat h(\xi)\|\leqslant C_{ \varphi}(1 + \|\xi\|)^{ a-3d}$.   The convolution of Fourier distributions gives (see  \cite[Th. 7.19-c]{Rudin2})   
\begin{align*}
  |  \widehat{ \varphi  } (\xi)   | = |  \widehat{ \psi } \ast \hat h(\xi) | \leqslant C_{ \varphi }\displaystyle\int_{\mathbb{R}^{ d}}(1 + \|v\|)^{ a-3d}\|\xi + v\|^{ a-d}dv.
  \end{align*}
For $ v\in B(0,\|\xi\|/2), \|\xi  + v\|\geqslant c(1 + \|\xi\|)$, and for $ v\in B(0,\|\xi\|/2)^{ c},(1 + \|v\|)^{ a-d}\leqslant c(1 + \|\xi\|)^{a -d}, $ hence $(1 + \|v\|)^{ a-3d}\leqslant c(1 + \|\xi\|)^{a -d}(1 + \|v\|)^{ -2d}$, which yields
\begin{align*}
  | \hat \varphi (\xi) | \leqslant & C_{ \varphi}\displaystyle\int_{B(0,\|\xi\|/2)} (1 + \|\xi\|)^{ a-d}(1 + \|v\|)^{ a-3d}dv +  \displaystyle\int_{B(0,\|\xi\|/2)^{ c}}(1 + \|\xi\|)^{ a-d}(1 + \|v\|)^{ -2d}\|\xi + v\|^{ a-d}dv\\
  \leqslant &C_{ \varphi}(1 + \|\xi\|)^{ a-d} \displaystyle\int_{\mathbb{R}^{ d}}(1 + \|v\|)^{ 	a-3d}dv+C_{ \varphi} (1 + \|\xi\|)^{ a-d}\left[\displaystyle\int_{B(-\xi,1)}\|\xi + v\|^{ a-d}dv + 
\displaystyle\int_{B(-\xi,1)^{ c}}(1 + \|v\|)^{ -2d} dv
\right]\\
  \leqslant& C_{ \varphi}(1 + \|\xi\|)^{ a-d} \text{\rm{ using }}\displaystyle\int_{B(-\xi,1)}\|\xi + v\|^{ a-d}dv = \displaystyle\int_{B(0,1)}\|v\|^{ a-d}dv = c_{ d}\displaystyle\int_{0}^{ 1}r^{ d-1}r^{ a-d}dr<\infty .
\end{align*}
This also propagates to $ \varphi _{q}$ since $ \hat \chi _{q}\leqslant 1$:
\begin{align*}
 | \widehat{  \varphi _{q}}(\xi) | = |  \hat \varphi  \hat \chi _{q} | (\xi)\leqslant \displaystyle C_{ \varphi} 
(1+\|\xi\|)^{a-d} .\end{align*}
In particular the bound does not depend on $ q$, and $ \varphi _{q}\in \F_{ \gamma }$. 

We apply Theorem \ref{thm:abstract-general} to $ \P_{N  }$, noting that $ a-d = -\frac{ d + \gamma }{2}$. Let $ \varphi _{ q,R} = \varphi _{ q}(\cdot /R).$ Since \eqref{eq:main-assum}  holds by assumption up to some constant,  \eqref{eq:main-result} also holds for $ \varphi _{ q,R}$ for $ R\in [1,N^{ \maxscale }]:$
\begin{align*}
  \textrm{Var}\left(\P_{N  }(\varphi _{ q,R})\right) \leqslant C_{ \varphi} V_{ \alpha ,\gamma }( R).
\end{align*}
Note that $ \varphi _{ q,R} = \chi _{ q/R} \ast \varphi _{ R}.$ Hence for fixed $ R, \varphi _{ q,R}\to \varphi _{ R}$ pointwise.
We have for each  $ R\in [1,N^{ \maxscale }]$: 
\begin{align*}
  \sup_{ q} \mathbf E \left[
\P_{N  }(\varphi _{ q,R})^{ 2}
\right]\leqslant C_{ \varphi}^{ 2} V_{ \alpha ,\gamma }( R) + \left(
\int_{ }\varphi _{ q,R}
\right)^{ 2}
\end{align*}
and \begin{align*}
 \displaystyle\int_{}\varphi _{ q,R} = \displaystyle\int_{}\chi _{ qR}\displaystyle\int_{}\varphi _{ R} = \displaystyle\int_{}\varphi _{ R}
\end{align*} does not depend on $ q.$
Since furthermore $ \P_{N  }(\varphi _{ q,R})\to \P_{N  }(\varphi _{ R})$ a.s. as $ q\to \infty $, Fatou's lemma yields 
\begin{align*}
 \mathbf E\left[
 \P_{N  }(\varphi _{ R})^{ 2}
\right]\leqslant \liminf_{ q} \mathbf E\left[
 \P_{N  }(\varphi _{ q,R})^{ 2}
\right]\leqslant  C_{ \varphi }^{ 2} V_{ \alpha ,\gamma }(R) + \left(
\int_{ }\varphi _{ R}
\right)^{ 2}
\end{align*}
giving the result after substracting     
\begin{align*}
  \textrm{Var}\left(\P_{N  }(\varphi _{ R})\right)\leqslant  C_{ \varphi }^{ 2} V_{ \alpha ,\gamma }(R).
\end{align*}

 \end{proof}

 \section{Application to Girko random matrices}
 \label{sec:girko}

 We present   first   a direct application of Theorem \ref{thm:non-asymp} to random matrices. Consider the eigenvalues  $   \X_{ N}: = \{
  {x_{ 1},\dots ,x_{ N}}\}$ of a $ N\times N$ random matrix with  i.i.d. complex entries $ X_{ i,j}\in  \mathbb C,1\leqslant i,j\leqslant N$ with a fixed continuous law satisfying the following assumption:

  \begin{assumption}\label{ass:girko}  For all $ p>0,$ $ \mathbf E\left[
  | X_{ i,j} | ^{ p}
\right]<\infty $ and  
\begin{align*}
 \mathbf E [X_{ i,j}] = \mathbf E [X_{ i,j}^{ 2}] =  0, \mathbf E  \left[
| X_{ i,j} | ^{ 2} 
\right]= 1.
\end{align*}
  \end{assumption}
  The traditional scaling is to choose $ \mathbf E \left[
 | X_{ i,j} | ^{ 2}
\right] =  \frac{ 1}{ n}   $, so that the eigenvalues are concentrated around $ B(0,1)$, but the current microscopic scaling is more adapted to the rest of this article and notationally lighter. 

It is classical that the corresponding renormalised empirical measure  weakly converges to the  {\it circular law}   a.s., i.e. for $ f\in \C_{ c}^{ 0}(\mathbb{R}^{ d})$, a.s.,
\begin{align*}
\frac{ 1}{N} \sum_{i = 1}^{ N}f(\sqrt{N}x_{ i})\to \frac{ 1}{\pi }\int_{ B(0,1)}f(x)dx,
\end{align*}
see \cite{CEK} for more background and references.
It means that the equilibrium measure is indeed proportionnal to Lebesgue, so that fluctuations of linear statistics have to be assessed  with respect to $ \frac{ 1}{\pi }\Leb$, i.e. we define 
\begin{align*}
 \overline{\X_{ N}}(f) = \X_{ N}(f)-\frac{ 1}{\pi }\Leb(f).
\end{align*}

Cipolloni, Erd\"os and Schr\"oder \cite{CES} give a uniform variance bound for smooth linear statistics  at various scales, see  \cite[(4.4)]{CES}: for $ f\in C_{ c}^{ 1}(\mathbb{R}^{ d})$, there is $ C_{ f}$ finite such that: for $ \maxscale \in (0,1/2),$ $ R\in [1,N^{ \maxscale }]$,
\begin{align*}
  \textrm{Var}\left(\X_{ N}(f_{ R})\right)\leqslant C_{ f}
\end{align*}
(see  \eqref{eq:bd-CES} in the proof below for details)
which in dimension $ 2$ indicates a $ 2$-hyperuniform behaviour.
They actually give a result for functions with a $ H_{ 0}^{ 2}$-regularity, slightly more general than $ \C^{ 1}$. We can therefore apply the variance transfer principle of Theorem \ref{thm:non-asymp}.

\begin{theorem}Let $ \rho _{ -}<1.$
 Let $ \maxscale ,\delta >0$ such that $ 4\maxscale + \delta <  \frac{ 1}{ 2}    $.   Then   for $N\geqslant 1,\varepsilon _{ N}: = N^{ -\delta }$,
     for $\gamma \in (0,1], \varphi \in \F_{ \gamma }$ there is $ C_{ \varphi }$ such that for $ R\in [0,N^{ b}],$
\begin{align*}
\sup_{ z_{ 0} \in B(0,\rho  _{ -}N^{ 1/d})}\overline{  \textrm{Var} }^{ \varepsilon _{ N}}\left[
{ \X_{ N}}(\tau_{ z_{ 0}}\varphi _{ R} ) 
\right]\leqslant C_{ \varphi }R^{ 2-\gamma }.
\end{align*}
For $ \gamma  = 1,\varphi  = 1_{ B(0,1)}, $  we recover the surface order bound for  balls number variances $ V_{ 2,1}(R) = R^{ 1}$, with $\X_{ N}(\tau _{ z_{ 0}}\varphi _{ R}) = \X_{ N}(B(z_{ 0},R))$.
 \end{theorem}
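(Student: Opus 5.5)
This theorem is a direct application of Theorem \ref{thm:non-asymp}(i) with $d=2$ and $\alpha=2$. With $d=2$, the constraint $\maxscale(d+2)+\delta<\frac1d$ reads $4\maxscale+\delta<\frac12$, which is exactly the hypothesis here. The main task is therefore to verify assumption \eqref{eq:ass-asymp-stat} from the bound of \cite{CES}, adapted to the normalisation $\frac1\pi\Leb$. The normalisation is harmless. Either rescale $\X_N$ by $\sqrt{\pi}$ so that the intensity becomes Lebesgue, which changes $\Sigma_0$ to $B(0,\sqrt\pi)$ and replaces $\rho_-$ by a comparable constant. Or observe that the proof of Theorem \ref{thm:non-asymp} only uses the centring by the mean intensity, which is $\frac1\pi\Leb$ here. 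With $\Sigma=N^{1/2}B(0,1)$, the condition $z_0\in B(0,\rho_-N^{1/2})$ with $\rho_-<1$ guarantees $z_0\in\Sigma^{2\varepsilon_NN^{1/2}}$ for $N$ large, since $\varepsilon_N N^{1/2}=o(N^{1/2})$. Small $N$ are absorbed into the constant, because then the number of points is bounded and so is the variance.

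To verify \eqref{eq:ass-asymp-stat}, fix a non-negative non-degenerate $f\in\C_c^\infty(B_1)$. We need, for all $R\in[1,N^{\maxscale}]$ and all $x\in\Sigma^R$,
\[
\mathbf E\big[\overline{\X_N}(\tau_xf_R)^2\big]\leqslant C_fR^{0}.
\]
This splits into a variance term and a squared bias term $(\mathbf E\X_N(\tau_xf_R)-\frac1\pi\Leb(\tau_xf_R))^2$. For the variance we invoke \cite[(4.4)]{CES}, which gives a bound uniform in the centre $z_0$ in the bulk and in the mesoscopic scale, for $H^2_0$ test functions. In the traditional scaling, $\tau_xf_R$ corresponds to a test function $g((z-z_0)N^{1/2}/R)$ at scale $N^{-1/2}R$ with $R\leqslant N^{\maxscale}$ and $\maxscale<1/2$. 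The bias term is controlled by the local law, i.e. the expectation asymptotics in \cite{CES,CEK}: the expected linear statistic differs from the circular law prediction by $O(N^{-c})$ uniformly in the bulk. This is bounded, hence $O(R^{d-\alpha})=O(1)$.

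One subtlety is that \eqref{eq:ass-asymp-stat} is required on $\Sigma^R$, which includes points near the edge, whereas \cite{CES} covers only the bulk $|z_0|\leqslant\rho<1$. I would handle this by applying Theorem \ref{thm:non-asymp} with $\Sigma_0$ replaced by a slightly smaller disk $B(0,\rho')$, with $\rho_-<\rho'<1$. The proof of Theorem \ref{thm:non-asymp} only involves the configuration inside $\Sigma$ through local restrictions to cubes of side $\varepsilon_NN^{1/d}$. Equivalently, the edge points are never used for centres in $B(0,\rho_-N^{1/2})$.

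The conclusion then follows from \eqref{eq:result-main-non-asymp}. With $\alpha=2$ and $\gamma\leqslant1<\alpha$, we get $V_{2,\gamma}(R)=R^{2-\gamma}$ with no logarithm. Values $R<R^\circ_f$ are covered by the crude bound $\mathrm{Var}\leqslant \|\varphi\|_\infty^2\mathbf E[\X_N(B(z,R^\circ))^2]=O(1)$, again by the local law. For $\varphi=1_{B_1}\in\F_1$ by \eqref{eq:bessel}, this gives the surface-order bound $R$. The main obstacle I expect is the precise extraction from \cite{CES} of a bound uniform in the centre and valid down to the microscopic scale $R\geqslant1$, together with the bias estimate. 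If \cite{CES} only gives scales $N^{-1/2+\epsilon}$, I would try first to cover small $R$ by the \cite{CEK} estimates, which hold at all scales, up to the constant.
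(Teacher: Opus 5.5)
Your proposal is correct and is essentially the paper's proof. You apply Theorem~\ref{thm:non-asymp}(i) with $d=\alpha=2$ and verify \eqref{eq:ass-asymp-stat}, centred at $\frac{1}{\pi}\Leb$, from the variance estimate \cite[(4.4)]{CES} and the expectation estimate \cite[(2.5)]{CES} on a slightly larger bulk disc. The paper's disc has radius $\frac12(1+\rho_-)N^{1/2}$, which plays the role of your $\rho'$. One small slip: a spatial rescaling to unit intensity would use the factor $1/\sqrt{\pi}$, not $\sqrt{\pi}$.

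The point you flag as the main obstacle is settled in the paper without \cite{CEK}. There, \cite[(4.4)]{CES} is applied for every $a\in(0,1/2)$, i.e.\ every $R=N^{1/2-a}\in(1,N^{1/2})$, with a bounded error term. The endpoint $R=1$ then follows by dominated convergence, since for fixed $N$ the statistics are bounded by $N\|f\|_\infty$. This matters, because your \cite{CEK} fallback, with its rate $R^{2-\varepsilon}$ at small scales, would not give the $O(1)$ bound needed for $\alpha=2$.
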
  
  This result therefore formally prevents the number variance in balls to have a power law growth $ \geqslant R^{ a},a>0,$ outside a   set of centers $ x$ with vanishing density. See Remark~\ref{rk:local-averag} for a discussion explaining why the local averaging $ \overline{\mathrm{Var}}^{\varepsilon_N} $ should not be interpreted as a smoothing procedure.
  
\begin{proof}Since the precise statement we need is not stated as a stand-alone result, let us indicate how to retrieve it. Let $ f\in \C_{ c}^{ \infty }(B_{ 1})$ non-negative and non-degenerate. 
Let us introduce some notation from  \cite{CES}. Let $ a\in [0,1/2]$ and  $ R_a = N^{ 1/2-a}$. They define at (2.2)
\begin{align*}
 L_{N}(f_{ z_{ 0},a}) := \X_{ N}(\tau _{ z_{ 0}}f_{ R_a})-\mathbf E \left[
\X_{ N}(\tau _{ z_{ 0}}f_{ R_a})
\right].
\end{align*}
They give in (2.5) an expectation estimate, which yields
\begin{align*}
  | \mathbf E\left[
 \X_{ N}(f_{ R_a})
\right]-\frac{ 1}{\pi }\Leb(f_{ R_a}) | \leqslant C_{ f}.
\end{align*}
Let $ \tau  =  \frac{ 1}{ 2}   (1-\rho _{ -})$.
They give in (4.4) the variance estimate  (with $ f^{ (i)} = f^{ (j)} =   f$), if $  z_{ 0}\in B(0, (1-\tau )N^{ 1/d}),$
\begin{align}
\label{eq:bd-CES} 
 \mathbf E \left[
L_{N }(f_{ z_{ 0},a})^{ 2}
\right] 
 =\frac{ 1}{4\pi } \int_{ } \nabla f^{ 2} + C_{ f}(  N^{ -4a} + N^{ -c}).
\end{align}
All in all it yields for us 
\begin{align*}
\mathbf E\left[\;
 \overline{\X_{ N}}(\tau _{ z_{ 0}}f_{ R_{  { a}}})^{ 2}\right]\leqslant 2\mathbf E \left[
L_{N }(f_{ z_{ 0},a})^{ 2} 
\right] + 2| \mathbf E \X_{ N}(f_{ R_a})-\frac{ 1}{\pi }\Leb(f_{ R_a}) |^{ 2} \leqslant C_{ f}.
\end{align*}
This inequality is only proved for $ a\in (0,1/2)$, whereas we need it for $ a = 1/2.$ 
Since the number of particles is fixed to $ N$ and $ f$ is bounded, the involved linear statistics are bounded by $ N\|f\|_\infty $ and Lebesgue's theorem extends the previous inequality to $ a = 1/2.$ Also, we only need to treat the exponents  $ a \geqslant 1/2-\maxscale $. Since $ \{R_{ a};a\in [1/2-\maxscale,1/2 ]\} = [1,N^{ \maxscale }]$, we indeed have for all $ R\in [1,N^{ \maxscale }]$
\begin{align*}
 \sup_{ z_{ 0}\in B(0,\rho _{ -}N^{ 1/d})}\mathbf E \left[\;\overline{\X_{ N}}(\tau _{ z_{ 0}}f_{ R})^{ 2}
\right]\leqslant C_{ f}.
\end{align*}

We can therefore apply Theorem \ref{thm:non-asymp} with $d =  \alpha  = 2$, concluding the proof.
\end{proof}

 {\bf Comparison with  Cipolloni, Erd\"os and Kolupaiev  \cite{CEK}}. We compare with Theorem 2.4   from the recent work  \cite{CEK}, where  hyperuniformity of Girko eigenvalues was proven for the first time. The two most important points are that they  have a non-optimal rate $ R^{ 2-\varepsilon }$ with $ \varepsilon \leqslant \frac{ 1}{20}$ instead of the optimal rate $ R$ obtained here, but on the other hand their results are valid over (almost) all scales, i.e. $ R^{ \maxscale }$ with $ \maxscale \in (0,1/2],$ and without the local averaging $ \overline{  \textrm{Var}}^{ \varepsilon _{ N}}$. Let us give some other differences :\begin{itemize}
\item  {\bf Kernel.} They consider indicator functions of the form $ \varphi  = 1_{\Omega  }$ where $ \Omega \subset B(0,\rho _{ -})$ is simply connected with $ \mathscr{C}^{ 2}$ boundary and has uniformly bounded curvature. In the current article, we consider not only indicator functions, but any function $ \varphi \in \B_{ c}^{}(\mathbb{R}^{ d})$, as long as it is  in $ \F_{  }: = \cup _{ \gamma >0}\F_{ \gamma }$ (but the variance rate would be in $ V_{ 2 ,\gamma }(R)$). For $ \varphi  = 1_{ \Omega }$, $ \varphi $ is in $ \F_{ }$ if $ \Omega $ is compact and convex and its boundary is $ 5/2$-times differentiable with non-vanishing curvature by  \cite[Thm. 2.16]{IoseElf}.
\item  {\bf Real case.} They obtain a similar result  for the real case but with smaller $ \varepsilon \leqslant  \frac{ 1}{ 53}.   $ In our case, we rely on Remark 3.6 from  \cite{CES} stating that the real case could be obtained with similar methods; and this could in principle be transferred with Theorem \ref{thm:non-asymp} to irregular kernels.
\item  {\bf Entries density.} In  \cite{CEK}, in addition to Assumption  \ref{ass:girko}, they require a density in $ L^{ 1 + a}(  \mathbb C )$ for the $ X_{ i,j}$, for some $ a>0$, but this assumption seems to be removable by their Remark 2.2. 
\end{itemize}

 \section{Applications to Gibbs measures}  
 \label{sec:gibbs}

We apply here the transfer principle to existing bounds on smooth  linear statistics over Gibbs measures, which requires a bit more preparation.  Let $ N\geqslant 1$
 and $ \X_{ N}^{ \beta }\subset \mathbb{R}^{ d}$  a random $ N$-tuple of points with density, in $X = (x_{ 1},\dots ,x_{ N}), $
\begin{align}
\label{eq:boltz}
\frac{ 1}{Z_{N }^{ \beta }}\exp(-\beta H(X))C(X),
\end{align}
  where $ H$ is a measurable ``energy'' function, $ \beta  = \beta _{ N}>0$ is  the inverse temperature  (possibly depending on $ N$),  $ C(X)$ is a confinement term, and $ Z_{ N  }^{ \beta }$, the  {\it partition function}, is a renormalising constant. We consider here pairwise interaction energies 
\begin{align}
\label{eq:hamil}
 H(X) = \sum_{i< j}\psi (\| x_{ i}-x_{ j}\|)
\end{align}
for some $ \psi : \mathbb{R}^{ d}\times \mathbb{R}^{ d}\to \mathbb{R}.$  See the recent monographs  \cite{Lewin,serfaty-book,bookDereudre} for an exhaustive introduction to the mathematical treatment of such models.
 The corresponding linear statistics are denoted by 
\begin{align*}
 \X_{ N} (f) = \sum_{i = 1}^{ N}f(x_{ i})
\end{align*} for $ f\in \B_{ c}^{}(\mathbb{R}^{ d})$. Even though we introduced $ \X_{ N}$ as a tuple of points, we rather consider by an abuse of notation that $ \X_{ N}$ is an unordered set of $ N$ points, i.e. a point process, as in the rest of the article, and it bears no consequences as we only consider linear statistics, which are functionals symmetric in the $ N$ points.

\subsection{Number variance for 2D and 3D Coulomb gases}

The Coulomb gases  are defined in dimension $ d\geqslant 2$ by  \eqref{eq:boltz}-\eqref{eq:hamil} with the Coulomb potential 
\begin{align*}
 \psi_{ d} (t) = 
 \begin{cases} 
 -\ln(t)$  if $d = 2\\
t^{ 2-d}$  if $d\geqslant 3.
  \end{cases}
\end{align*}
  This potential is   relevant for many models  in physics \cite{Lewin,serfaty-book}, mathematically  convenient due to the homogeneity of $ \psi _{ d}$ and its harmonicity $ \Delta \psi _{ d}(\|\cdot \|) =- c_{ d}\delta _{ 0}$ for some explicit $ c_{ d}>0.$ The corresponding  point process is denoted here $ \X_{ N}^{ d,\beta }$ for $ \beta >0.$ We consider the canonical choice for the confinement term  \begin{align*}
 C(x_{ 1},\dots ,x_{ N}) =  \prod_{i = 1}^{ N}\exp(-\|x_{ i}\|^{ 2}).
\end{align*}

The system $ \X_{N }^{ d,\beta }$ is not formally invariant under translations, but it is expected to be homogeneous in space in the bulk. Some works  rather consider Coulomb gases without confining potential and with periodic boundary conditions, i.e. $ C(X) = 1_{ \T[\lambda]}(X)$ with $ \lambda  = N^{ 1/d}$ and 
\begin{align*}
H(X) = \sum_{1\leqslant i< j\leqslant N}\psi_{ d}^{ \T} (X)
\end{align*}
where $ \psi _{ d}^{ \T}$ satisfies by analogy $ \Delta \psi _{ d}^{ \T} =\Leb-\lambda^{-d} \delta _{ 0}$ on $ \T$  \cite{CohHer}.
 The periodic transfer principle of this article is better suited to processes which are  invariant, as it applies with unrestricted scales, so ideally it would apply at all scales to the periodic   version of the model on $ \T$. Unfortunately, to the author's knowledge, no  hyperuniform bounds are available for the variance of smooth linear statistics, the work \cite{CohHer} only treats the macroscopic scale whereas we need bounds down from the microscopic scale to apply Theorem \ref{thm:abstract-general} .\\
 
 It is known that most points will typically spread in the equilibrium disk $ B_{\rho ^{+}N^{ 1/d}}$ with   $ \rho ^{+}$ depending on $ d$, according to the macroscopic uniform equilibrium measure $ \mu_{ N}  = \Leb(B_{ 1})^{ -1}\rho _{  + } ^{ -1/d}  { {\bf 1}\{ B_{ \rho ^{+}{N^{ 1/d}}} \}}$  \cite[Section 2.1]{serfaty-book}. The set of points of $ \X_{N}^{d,\beta }$ close to the boundary, or even outlier points outside the disk, have a different behaviour generally not captured by general statements. They are distinguished from the ``bulk'', consisting of interior points far from the boundary. We fix below  $ c>0$ and $$ \rho ^{  - }_{ N} = \rho  _{  + }N^{ \frac{ 1}{d}}-cN^{ \frac{ 1}{d + 2}}$$  
and only treat  the points of $ \X_{N}^{d,\beta }\cap B_{\rho ^{-}_{N}}$.  \\

A longstanding questions is whether Coulomb systems are  hyperuniform, i.e. whether the variance of the number of points in large balls have reduced fluctuations, see  \cite{Lewin,Leble,ChaHU} and references therein.  
 The only  hyperuniformity result in dimension $ d\geqslant 2$ for invariant Coulomb systems is Leblé \cite{Leble},  with a logarthmic reduction on the Poisson  rate for the number variance over a ball $ B(x,R)$, i.e. the variance is bounded by $ CR^{ 2}\ln(R)^{ -0.6}$. Chatterjee  \cite{ChaHU} gives  sharp hyperuniformity  results for a simpler model, the  {\it hierarchical Coulomb gas}, that is not stationary and characterised by conditional independence of subsystems and a self-similar nature.

One of the main points of the current article is to state that  hyperuniformity is not about the number variance over balls, but about the variance decay for any rescaled linear statistic, and that it should transfer to balls up to the exponent truncation at the regularity index $ \gamma $, with $ \gamma  = 1$ for the ball. The reduced scale range for the current results $ [1,N^{\maxscale }]$ instead of $ [1,N^{ 1/d}]$ is  an artifact of the method.

  For smooth linear statistics, Serfaty  \cite{Serfaty-HU} derived reduced variance bounds for large Coulomb systems, with variance rate $V(R)\asymp R^{ 2d-4}$ (in the current scaling), i.e. $ R^{ 0} = R^{ 2-2}$ in dimension $2$, also called $ 2$-hyperuniformity (negative exponent $ \alpha  = 2$) and $ V(R) \asymp R^{ 2} = R^{ 3-1} $ in dimension $ 3$, or $ 1$-hyperuniformity ($ \alpha  = 1$).  Bauerschmidt et al.  \cite{BauerschmiditBourgadeNikulaYau} give similar results  in dimension $ 2$. Serfaty \cite{Serfaty-HU} also gives a CLT with variance in $ R^{ d-2}$, suggesting that the optimal exponent is $ 2$ instead of $ 1$ in dimension $ 3$, as for most known disordered systems  \cite{survey-hu} in dimension $ d\geqslant 2$, in particular the exponent $ 2$ in dimension $ 2$ is likely optimal. In view of Theorems \ref{thm:non-asymp}, \ref{thm:main-asymp} we seek to achieve variance rates of order $ V_{\alpha ,1}(R)$ over balls with radius $ R$, i.e. with the rate  $$ V_{ d}^{ \Coul}(R) := 
\begin{cases} 
R$  if $d = 2\\
R^{ 2}\ln(R)$  if $d = 3.
 \end{cases}$$ 
 
 \subsubsection{Infinite volume Coulomb gases}  
 \label{sec:coul-infinite}
 
 As in Section \ref{sec:asymp}, we give first a result for a  limit point, 
 i.e. a point process    limit of a subsequence $\P^{ d,\beta } =  \lim_{ N_{ j}\to \infty }\tau _{ -x_{ N_{ j}}}\X_{ N_{ j}}$ weakly in the vague topology, with $ x_{ N}\in B(0,\rho _{ N}^-)$. A deeper interpretation of  the transfer principle in the stationary case involves the second order spectral measure, and this in turn implies global properties related to Coulomb energy, optimal transport and rigidity that we state just after.
 
 The existence of such limit points in dimension $ d\geqslant 2$, that is, the tightness of the laws of the $ \X_{ N},N\geqslant 1$, has been proved by Armstrong and Serfaty  \cite{ArmSerf}. This tightness is more generally a consequence of a uniformly bounded intensity by \cite[Thm. 14.16-(iv)]{kallenberg2002foundations}, hence it can immediately be deduced from the estimates of \cite{Serfaty-HU}, see in particular Equation  \eqref{eq:fluct-exp} below. The unicity of the limit $ \P$ is in general not expected to hold, especially at low temperature, but this is a mathematically  difficult question. Whether $ \P$ is stationary is also not automatic; Leblé  \cite{Leble-stationary} proves that any such process in dimension $ 2$ is translation invariant, which we integrate to the result below.
 As in the rest of the article, we also give results for  linear statistics less regular than the ball.
 
\begin{theorem}[Asymptotic  hyperuniformity for $ d = 2,3$]
\label{thm:3d-coul}
Let $ x_{ N}\in B(0,\rho _{ N}^-),N\geqslant 1$, $ \P^{ d,\beta }$ a limit point of $   \{  \tau _{ -x_{ N}}\X^{ d,\beta }_{ N};N\geqslant 1\}$ in the vague topology, $ \gamma>0,\varphi \in \F_{ \gamma }$ continuous  $ \Leb$-a.e..
\label{thm:Coul-infinite} 
\begin{itemize}
\item  {\bf 2D:}  $  \P^{ 2,\beta }$ is stationary (by  \cite{Leble-stationary}) and $ 2$-hyperuniform:  as $ R\to \infty $ 
\begin{align*}
  \textrm{Var}\left(\P^{ 2,\beta }( \varphi _{ R})\right)  = O(R^{ 2-\min(\gamma,2) }\ln(R)^{ {\bf 1}\{ \gamma  = 2 \}}).
 \end{align*}
\item  {\bf 3D:} There is $ R^{ \circ}\geqslant 1$ such that for $R\geqslant R ^{\circ }, L \geqslant  { R^{ 2d + 1-\min(1,\gamma )}},$  
\begin{align}
\label{eq:concl-asymp-main}
\sup_{ x_{ 0}\in \mathbb{R}^{ d}}\displaystyle\int_{ [-L/2,L/2 ]^{ d}}  \textrm{Var}\left(\P^{ 3,\beta }(\tau _{ x_{ 0} + x}\varphi _{ R})\right)\frac{ dx}{L^{ d}}\leqslant C_{ \varphi }R^{ 3-\gamma }\ln(R)^{ {\bf 1}\{ \gamma  = 1 \}}.
 \end{align}
 If $ \P^{ 3,\beta }$ is stationary, it is $ 1$-hyperuniform: 
\begin{align*}
  \textrm{Var}\left(\P^{ 3,\beta }(\varphi _{ R})\right) = O(R^{ 3-\gamma }\ln(R)^{ {\bf 1}\{ \gamma  = 1 \}}).
\end{align*}
\end{itemize}
 \end{theorem}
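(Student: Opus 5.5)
The plan is to reduce the statement to Theorem~\ref{thm:main-asymp}. The one input it needs is a uniform smooth-kernel bound of the form
\begin{align*}
\sup_{x\in\Sigma^{R}}\mathbf E\left[\overline{\X_N}(\tau_x f_R)^2\right]\leqslant C_f R^{d-\alpha}, \qquad R\in[1,R_N],
\end{align*}
with $\alpha=2$ in dimension $2$ and $\alpha=1$ in dimension $3$. Here $\Sigma$ is taken to be the bulk ball $B(0,\rho^-_N)$, and $R_N\to\infty$.

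\textbf{Step 1: the smooth input from \cite{Serfaty-HU}.} I fix a non-negative, non-degenerate $f\in\C_c^\infty(B_1)$. In microscopic scaling, Serfaty's results give two estimates uniformly over centres $x$ with $B(x,R)\subset B(0,\rho^-_N)$ and for $R$ above the microscopic scale:
\begin{itemize}
\item a fluctuation bound $\mathrm{Var}(\X_N(\tau_x f_R))\leqslant C R^{2d-4}$;
\item an expectation estimate $|\mathbf E\X_N(\tau_x f_R)-\Leb(\tau_xf_R)|\leqslant C R^{d-2}$, up to the normalisation of the equilibrium density, which I absorb into $\Leb$ as in Section~\ref{sec:girko}.
\end{itemize}
I expect to need to rederive the second one, since it is the equation referred to as \eqref{eq:fluct-exp} below. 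Combining the two as in the Girko proof gives $\mathbf E[\overline{\X_N}(\tau_x f_R)^2]\leqslant C R^{2d-4}$. This equals $R^{d-2}$ for $d=2$, so $\alpha=2$, and $R^{d-1}$ for $d=3$, so $\alpha=1$. Small $R$ is handled by the crude bound coming from local laws (bounded local density). The hypothesis $x_N\in B(0,\rho^-_N)$ with $\rho^-_N=\rho_+N^{1/d}-cN^{1/(d+2)}$ ensures that $d(x_N,\partial\Sigma)\to\infty$ and that $\Sigma^R$ lies in Serfaty's bulk region for $R\leqslant R_N:=N^{1/(d+2)}/2$, say. This is the step I expect to be the main obstacle: Serfaty's estimates are stated with $\beta$-dependent error terms and a minimal scale, and I must check that they are uniform in the centre and hold down to a fixed microscopic scale.

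\textbf{Step 2: applying Theorem~\ref{thm:main-asymp}.} I take a limit point $\P^{d,\beta}$ and $\varphi\in\F_\gamma$ continuous $\Leb$-a.e. Theorem~\ref{thm:main-asymp} then gives the locally averaged bound \eqref{eq:concl-asymp-main} with rate $V_{\alpha,\gamma}(R)$.
\begin{itemize}
\item For $d=3$, $\alpha=1$, the rate is $R^{3-\min(1,\gamma)}\ln(R)^{{\bf 1}\{\gamma=1\}}$. This is the announced bound (read with $\gamma\leqslant1$, the relevant case). The admissible window $L\geqslant R^{d+2}=R^5$ is implied by $L\geqslant R^{2d+1-\min(1,\gamma)}\geqslant R^{6}$.
\item If $\P^{3,\beta}$ is stationary, \eqref{eq:concl-asymp-main-statio} gives the pointwise statement.
\end{itemize}

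\textbf{Step 3: the 2D case.} By \cite{Leble-stationary}, every such limit point in dimension $2$ is stationary, so \eqref{eq:concl-asymp-main-statio} with $\alpha=2$ gives $R^{2-\min(2,\gamma)}\ln(R)^{{\bf 1}\{\gamma=2\}}$. For $\gamma>1$ I note that Theorem~\ref{thm:main-asymp} imposes no restriction $\gamma\leqslant1$ in the asymptotic setting, because the scale restriction only concerns $L$. Hence the full range of $\gamma$ is available.
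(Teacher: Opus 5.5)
Your reduction is the one the paper uses. Serfaty's smooth-kernel estimate gives $\mathbf E[\overline{\X_N}(\tau_x f_R)^2]\leqslant cR^{2d-4}$, which is the hypothesis of Theorem~\ref{thm:main-asymp} with $\alpha=2$ for $d=2$ and $\alpha=1$ for $d=3$, and stationarity in 2D comes from \cite{Leble-stationary}. Your reading of the 3D bound is also correct: the transfer yields $V_{1,\gamma}(R)=R^{3-\min(1,\gamma)}\ln(R)^{{\bf 1}\{\gamma=1\}}$. So for $\gamma>1$ one only gets $R^2$, not $R^{3-\gamma}$. Likewise, in the stationary 2D case the window on $L$ is irrelevant, so all $\gamma>0$ are indeed available.

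One step fails as written. With $\Sigma=B(0,\rho^-_N)$, the hypothesis $x_N\in B(0,\rho^-_N)$ does \emph{not} give $d(x_N,\partial\Sigma)\to\infty$, because $x_N$ may approach $\partial B(0,\rho^-_N)$. This is not cosmetic. The proof of Theorem~\ref{thm:main-asymp} needs $x_N+x_0\in\Sigma^{2L}$ for all large $N$, for every fixed $L$ and $x_0$, in order to pass the locally averaged bound to the limit. The paper avoids this by working on $\hat\Sigma=B(0,\rho^-_N+R_N)$, where Serfaty's Corollary~2.2 still applies. Then $d(x_N,\partial\hat\Sigma)\geqslant R_N\to\infty$ holds automatically.

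You should do the same, but take $R_N\to\infty$ slowly, e.g.\ $R_N=o(N^{1/(d+2)})$, so that the enlarged set keeps a margin of order $N^{1/(d+2)}$ from the edge of the droplet. Your choice $R_N=N^{1/(d+2)}/2$ could eat the whole margin $cN^{1/(d+2)}$ if $c\leqslant 1/2$.

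A minor simplification: you do not need a separate expectation estimate. Serfaty's exponential moment bound $\mathbf E[\exp(|\fluct(R^{2-d}\tau_x f_\ell)|)]\leqslant C_f$ measures fluctuations around the equilibrium (Lebesgue) measure, not around the mean. It therefore gives the second moment of $\overline{\X_N}(\tau_xf_R)$ directly, and \eqref{eq:fluct-exp} comes out as a by-product.
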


  \newcommand{\W}{   \mathsf W}  

\subsubsection{Consequences on rigidity, transport and Coulomb energy}

At the heart of considerations about fluctuations of linear statistics for a locally square integrable stationary point process $ \P$ lies  its spectral measure $ \S$, defined below.   The results above imply $ \alpha$ -hyperuniformity for $\alpha  \geqslant 2$ for $ \P^{2,\beta }$, and this in turn implies several macroscopic properties. Let us  briefly introduce the related concepts. \begin{itemize}
\item The  {\bf spectral measure} $ \S$ of a locally square integrable  stationary point process $ \P$ is characterised by
\begin{align*}
  \textrm{Var}\left(\P(f)\right) =   \int_{ \mathbb{R}^{ d}} | \hat f | ^{ 2}\S, f\in \B_{ c}^{ }(\mathbb{R}^{ d}).
\end{align*}
It is the infinite volume analogue of the spectral measure of Proposition \ref{lm:spectral}. $ \S$ is also the Fourier transform in the distributional sense of the reduced covariance measure $   \mathsf C$ characterised over Schwartz functions $ f,g$ by
\begin{align*}
 \textrm{Cov}\left(\X(f),\X(g)\right)=\int_{\mathbb{R}^{d}} f(x)g(x+y)dx   \mathsf C(dy).
\end{align*} 
For an interpretation of $   \mathsf C(dx)$ for $ x\neq 0$, take as $ f,g$ approximations of Dirac masses around resp. $ 0$ and $ x$, $   \mathsf C(dx)$ denotes the covariance  of infinitesimal masses at $ 0$ and $ x.$
\item One way to assess the large scale order of a point process  $ \P$   is through its distance to Lebesgue measure in terms of  {optimal transport}.   The most commonly used metric is the Wasserstein distance $ \W_{ p},p\geqslant 1$, see  \cite{LrY,HueLebl}, but it requires some care when $ \P$ has infinite mass.  Recall that between two measures $ \mu ,\nu $ with finite identical mass on $ \mathbb{R}^{ d}$, 
\begin{align*}
  \mathsf W_{ p}^{ p}(\mu ,\nu ) = \inf_{ M}\int_{ \mathbb{R}^{ d}\times \mathbb{R}^{ d}}\|x-y\|^{ p}M(dx,dy)
\end{align*}
where the infimum is over couplings $ M$ of $ \mu $ and $ \nu $, i.e. $ \M$ is a measure over $ \mathbb{R}^{ d}\times \mathbb{R}^{ d}$ such that $ M(\mathbb{R}^{ d},\cdot ) = \nu ,M(\cdot ,\mathbb{R}^{ d}) = \mu .$
See for instance
Erbar et al. \cite{erbar2023optimal}, where this concept is further extended to a stationary point process $ \P$ over $ \mathbb{R}^{ d}$ with the average cost per unit volume. With $ \tilde \P_{\lambda }: = \P(\T)^{ -1}\P1_{ \T},$
\begin{align*}
   \mathsf \W_{ p}^p(\P,\Leb) := \lim_{\lambda\to \infty  } \lambda^{ -d} \mathbf E \left[
\W_{ p}^p( \tilde \P_{\lambda },\lambda^{-d}\Leb 1_{ \T} )
\right].
\end{align*}
It equivalently means that there is a stationary random coupling between the points of $ \P$ and the Lebesgue measure, whose average quadratic cost per unit volume is bounded.
\item The  {\bf Coulomb energy}, ubiquitous in the study of Coulomb gases  \cite{Lewin,serfaty-book,HueLebl},
can be defined for a finite sample $ \X_{ N}$ in the following way:
\begin{align*}
 H_{ N}(\X_{ N}) = \frac{ 1}{2}\displaystyle\int_{B(0,\rho _{  + }N^{ 1/d})}\displaystyle\int_{B(0,\rho _{  + }N^{ 1/d})}{\bf 1}\{ x\neq y \}\psi _{ d}(\|x-y\|)(d\X_{ N}(x)-dx)(d\X_{ N}(y)-dy
).\end{align*}
Due to the heavy tail of the Coulomb potential, defining an infinite volume analog for a limit point $ \P$ is more challenging, see   the discussion at  \cite[Section 1.3]{HueLebl}.  The general idea is to  relate the Coulomb energy to the well-definedness  of the electric field generated by particles placed at points of $ \P$, and the finiteness of its second moment. This question has been tackled by Sodin et al.  \cite{SodinElectric} for a stationary point process $ \P$, showing that a necessary and sufficient condition is \begin{align}
\label{eq:finite-Coul}
 \int_{ \mathbb{R}^{ 2}}\|\xi\|^{ -2}\S(d\xi )<\infty .
\end{align}
    Building on  \cite[(1.28)-(1.30)]{HueLebl} we will say here that $ \P$ has  {\it  finite Coulomb energy} if  \eqref{eq:finite-Coul} holds.

  Huesmann and Leblé \cite{HueLebl} furthermore showed that  finite Coulomb energy has implications in terms of optimal transport: it implies a finite $   \mathsf W_{ 2}^{ 2}(\P,\Leb[2])$ distance.  
  Hence the points of the plane $ \mathbb{R}^{ 2}$  can be allocated to the points of $ \P$ with mass conservation and finite average quadratic cost; equivalently, it means that there is   a random  equivariant  bijection between $ \P$ and a properly rescaled version of $ \mathbb{Z} ^{ 2}$ such that the expected square matching distance is finite.
  
 \item The  {\bf linear number rigidity} of $ \P$ means that the number of points in a compact $ K$ with non-empty interior (typically $ K = B_{ 1}$) can be approximated by linear statistics supported by $ K^{ c}$:
\begin{align}
\label{eq:number-rigid}
\inf_{ h\in \C_{ c}^{ \infty }(K^{ c})}\mathbf E\left[
 \left|
\P(K)-\P(h)
\right|^{ 2}
\right] = 0,
\end{align}
this is in theory stronger than the concept of rigidity investigated in the last years  \cite{GP17,Lr24,ChhaibiNaj,DHLM}, but in many cases the rigidity of such disordered processes is linear.
 The concept emerged after the seminal work of  \cite{GP17} about the infinite Ginibre  ensemble and the zeros of the planar Gaussian analytic function, two emblematic examples of the theory of  hyperuniformity. See  \cite[Chapter 5]{survey-hu},  \cite{Lr24} for the spectral characterisation of linear rigidity and further references. We prove below number rigidity for the 2D Coulomb gases, it was already proved in  \cite{Leble-stationary} by different methods based on DLR equations. We can actually give a convenient lemma, consequence of  \cite{Lr24}:

\begin{lemma}\label{lm:NR}
 Let $ \X$ be a $ d$-hyperuniform stationary point process on $ \mathbb{R}^{ d}$.
Then $ \X$ is number rigid.
\end{lemma}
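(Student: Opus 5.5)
The plan is to go through the spectral characterisation of linear rigidity from \cite{Lr24}. There, a stationary locally square integrable point process $\X$ with spectral measure $\S$ is linearly number rigid for a compact $K$ with non-empty interior (e.g.\ $K=B_1$) provided $\S$ has enough small-frequency decay. In dimension $d$, the relevant sufficient condition is
\begin{align*}
\int_{B_{1}}\|\xi\|^{-2}\S(d\xi)<\infty .
\end{align*}
Equivalently, one can build $h_n\in\C_c^\infty(K^c)$ such that $\widehat{1_K}-\widehat{h_n}$ vanishes to order two at $0$ in an $L^2(\S)$ sense. So the task reduces to translating $d$-hyperuniformity into this integrability of $\S$ near the origin.

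First I make precise what ``$d$-hyperuniform'' means. Following the usage of the paper (and Theorem \ref{thm:main-periodic} / \ref{thm:main-asymp}), it means that for some non-negative non-degenerate smooth $f$ with compact support,
\begin{align*}
\textrm{Var}\left(\X(f_R)\right)=\int |\hat f(R\xi)|^2R^{2d}\S(d\xi)\leqslant C R^{0}.
\end{align*}
Since $f\geqslant 0$ is non-degenerate, $|\hat f|\geqslant c>0$ on some ball $B_{r_0}$. Hence the display gives $\S(B_{r_0/R})\leqslant C R^{-2d}$ for all $R\geqslant R^\circ$, i.e.\ $\S(B_t)\leqslant Ct^{2d}$ for small $t$.

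Next I sum over a dyadic decomposition. Writing $A_k=B_{2^{-k}}\setminus B_{2^{-k-1}}$,
\begin{align*}
\int_{B_1}\|\xi\|^{-2}\S(d\xi)\leqslant \sum_k 2^{2k+2}\S(B_{2^{-k}})\lesssim\sum_k 2^{2k-2dk}<\infty
\end{align*}
as soon as $d\geqslant 2$. For $d=1$ the exponent is $2-2=0$ and the series diverges, so this step needs one more input. Here I would use the exponent truncation built into the transfer principle: apply Theorem \ref{thm:main-asymp} with a kernel in $\F_\gamma$ and $\gamma>\alpha$, or sharpen the bound via the spectral criterion of \cite{Lr24} in $d=1$, which only requires $\int\|\xi\|^{-2}\S<\infty$ or the weaker $\liminf$-type condition stated there. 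Alternatively I would invoke directly the criterion from \cite{Lr24} that number rigidity holds when $\S(B_t)=o(t^{2})$ combined with $\S(B_t)=O(t^{d+\alpha})$ for some $\alpha>2-d$. $d$-hyperuniformity gives $\alpha=d>2-d$ for every $d\geqslant 1$, which covers all dimensions at once.

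Finally, I conclude by citing \cite{Lr24} (see also \cite[Chapter 5]{survey-hu}): under the spectral condition, the approximating functions $h_n=1_K\ast\rho$ minus compactly supported smooth corrections satisfy $\int|\widehat{1_K}-\widehat{h_n}|^2\S\to 0$, which is exactly \eqref{eq:number-rigid}.

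The main obstacle is matching the exact hypothesis of \cite{Lr24} with the small-ball mass bound $\S(B_t)\leqslant Ct^{2d}$. That requires checking that the local bound near $0$ is what that criterion needs, since far from $0$ the measure $\S$ is only translation bounded. It also requires handling the borderline dimension $d=1$ as indicated above.
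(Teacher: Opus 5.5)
Your reduction rests on a criterion that is not the one in \cite{Lr24}, and it is false. The condition $\int_{B_1}\|\xi\|^{-2}\S(d\xi)<\infty$ is the finite Coulomb energy condition, not a rigidity criterion. For $d\geqslant 3$ the Poisson process ($\S$ a multiple of Lebesgue measure) satisfies it and is not number rigid. Your alternative criterion, ``$\S(B_t)=o(t^2)$ and $\S(B_t)=O(t^{d+\alpha})$ with $\alpha>2-d$'', is also satisfied by Poisson in $d=3$, taking $\alpha=0$.

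In the other direction, in $d=1$ the condition is not even implied by $d$-hyperuniformity. The sine process has $\s(\xi)\propto|\xi|$ near $0$, so $\S(B_t)=O(t^{2})$ holds, yet $\int|\xi|^{-2}\S(d\xi)=\infty$, and this process is rigid. So your dyadic computation verifies the wrong condition. The patches you list for $d=1$ do not close the gap either: moving to rougher kernels in $\F_\gamma$ only weakens the variance bounds, and the transfer principle cannot improve the small-ball decay of $\S$.

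The input actually needed is \cite[Theorem 1]{Lr24}. Write $\s$ for the density of the absolutely continuous part of $\S$. Then $\X$ is linearly number rigid as soon as $\int_{B_\varepsilon}\s(\xi)^{-1}d\xi=\infty$ for every $\varepsilon>0$.

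The missing step is then a Cauchy--Schwarz argument. Your correct first step gives $\int_{B_\varepsilon}\s\leqslant\S(B_\varepsilon)\leqslant C\varepsilon^{2d}$, hence
\[
\Leb(B_1)^2\varepsilon^{2d}=\Bigl(\int_{B_\varepsilon}\s^{1/2}\s^{-1/2}\Bigr)^2\leqslant C\varepsilon^{2d}\int_{B_\varepsilon}\s^{-1}.
\]
So $\int_{B_\varepsilon}\s^{-1}\geqslant\Leb(B_1)^2/C$ for all small $\varepsilon$. This contradicts integrability near $0$, since integrability would force $\int_{B_\varepsilon}\s^{-1}\to 0$ as $\varepsilon\to 0$.

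This is the paper's argument. It works uniformly in $d$, including $d=1$. It also shows why $2d$ is exactly the threshold: the bound $\S(B_\varepsilon)=O(\varepsilon^{2d})$ matches $\Leb(B_\varepsilon)^2$.
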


\begin{proof}
The assumption means for $ \varepsilon <\varepsilon _{ 0}$
\begin{align*}
 \S(B_{ \varepsilon }) \leqslant C\varepsilon ^{ 2d}.
\end{align*}

 Denote by $ \s(\xi)d\xi $ the component of $ \S$ continuous  with respect to Lebesgue measure. Clearly 
\begin{align*}
 \int_{ B_{ \varepsilon }}\s(\xi)d\xi \leqslant \S(B_{ \varepsilon })\leqslant C\varepsilon ^{ 2d}.
\end{align*} \cite[Theorem 1]{Lr24} yields number rigidity if for all $ \varepsilon >0$
\begin{align*}
 \int_{ B_{ \varepsilon }}\frac{ 1}{\s(\xi)}d\xi = \infty .
\end{align*}  
Assume the integral is finite for some $ \varepsilon_{ 0} >0$. Then for $ \varepsilon $ sufficiently small,
\begin{align*}
 \int_{ B_{ \varepsilon }}\frac{1 }{\s(\xi)}d\xi <\Leb(B(0,1))^{ 2}/C.
\end{align*} Then Cauchy-Schwarz inequality yields
\begin{align*}
\Leb(B(0,1))\varepsilon ^{ d} = &\int_{ B_{ \varepsilon }}\sqrt{\s(\xi)} \frac{1}{\sqrt{\s(\xi)}}d\xi \leqslant \left(
\int_{ B_{ \varepsilon }}\s(\xi)d\xi 
\right)^{  \frac{ 1}{ 2}   } \left(
\int_{ B_{ \varepsilon }}\frac{ 1}{\s(\xi)}d\xi \right) ^{  \frac{ 1}{ 2}   }   \leqslant \sqrt{C}\varepsilon ^{ d}\left(
\int_{ B_{ \varepsilon }}\frac{ 1}{\s(\xi)}d\xi \right)^{  \frac{ 1}{ 2}   } \\
<&\Leb(B(0,1))\varepsilon ^{ d},
\end{align*}
reaching a contradiction.
\end{proof}
\end{itemize}
The hyperuniformity rates obtained above allow to assess that these properties are satisfied by infinite volume 2D Coulomb gases.
\begin{corollary}
\label{thm:spectral-2D-Coul}
Any  limit point $ \P^{ 2,\beta }$ as in Theorem  \ref{thm:3d-coul}  is stationary and has a spectral measure satisfying $ \S^{ 2,\beta }(B_{ \varepsilon }) = O(\varepsilon ^{ 4})$ as $ \varepsilon \to 0$. Hence we have\begin{itemize}
\item  {\bf Finite Coulomb energy}: There exists $ \varepsilon >0$ such that
\begin{align*}
 \int_{ B_{ \varepsilon }}\|\xi\|^{ -2}\S^{ 2,\beta }(d\xi )<\infty .
\end{align*}
\item  {\bf Finite $ 2$-Wasserstein distance:} $ \W_{ 2}^{ 2}(\P^{ 2,\beta },\Leb)<\infty $.
\item  {\bf Linear number rigidity}, i.e.  \eqref{eq:number-rigid} holds.
\end{itemize}
\end{corollary}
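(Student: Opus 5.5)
We combine Theorem \ref{thm:Coul-infinite} (2D case) with the spectral representation of the variance, and then feed the resulting small-frequency bound on $\S^{ 2,\beta }$ into the three listed consequences. Stationarity is given by \cite{Leble-stationary}. For the spectral estimate we apply the 2D bound of Theorem \ref{thm:Coul-infinite} with a smooth kernel rather than a ball. We take $\varphi \in \C_{ c}^{ \infty }(\mathbb{R}^{ 2})$ non-negative and non-degenerate with $\hat \varphi (0) = \int \varphi >0$. Such a $\varphi$ belongs to $\F_{ \gamma }$ for every $\gamma$; taking $\gamma >2$, or simply using the smooth hypothesis coming from \cite{Serfaty-HU} with $\alpha  = 2$, gives
\begin{align*}
 \textrm{Var}\left(\P^{ 2,\beta }(\varphi _{ R})\right) = O(R^{ 2-2}) = O(1),\qquad R\to \infty .
\end{align*}
Since $\widehat{ \varphi _{ R}}(\xi) = R^{ 2}\hat \varphi (R\xi)$, the spectral identity reads
\begin{align*}
\int_{ \mathbb{R}^{ 2}} R^{ 4} | \hat \varphi (R\xi) | ^{ 2}\S^{ 2,\beta }(d\xi) = O(1).
\end{align*}
By continuity of $\hat \varphi$ there is $r_{ 0}>0$ with $| \hat \varphi  | \geqslant c>0$ on $B_{ r_{ 0}}$. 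Restricting the integral to $B_{ r_{ 0}/R}$ gives $c^{ 2}R^{ 4}\S^{ 2,\beta }(B_{ r_{ 0}/R})\leqslant C$. Setting $\varepsilon  = r_{ 0}/R$ yields $\S^{ 2,\beta }(B_{ \varepsilon }) = O(\varepsilon ^{ 4})$. Local square integrability of $\P^{ 2,\beta }$, which is needed for $\S$ to exist, follows from the bounded variance of the linear statistics together with Lebesgue intensity.

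\textbf{Finite Coulomb energy.} We use a dyadic decomposition. Writing $A_{ k} = B_{ 2^{ -k}\varepsilon }\setminus B_{ 2^{ -k-1}\varepsilon }$, we get
\begin{align*}
\int_{ B_{ \varepsilon }  \setminus \{0\}}\|\xi\|^{ -2}\S^{ 2,\beta }(d\xi)\leqslant \sum_{ k}(2^{ -k-1}\varepsilon )^{ -2}\,C(2^{ -k}\varepsilon )^{ 4}<\infty .
\end{align*}
We also need $\S^{ 2,\beta }(\{0\}) = 0$. This follows from $\S^{ 2,\beta }(B_{ \varepsilon })\to 0$, which holds since it is $O(\varepsilon ^{ 4})$; the atom at the origin would otherwise be the variance of the ergodic-limit intensity.

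\textbf{Finite $2$-Wasserstein distance.} This follows by invoking Huesmann--Leblé \cite{HueLebl}: finite Coulomb energy in the sense of \eqref{eq:finite-Coul} implies $\W_{ 2}^{ 2}(\P,\Leb)<\infty$. Condition \eqref{eq:finite-Coul} over all of $\mathbb{R}^{ 2}$ requires, besides the bound near $0$, that $\int_{ B_{ \varepsilon }^{ c}}\|\xi\|^{ -2}\S^{ 2,\beta }(d\xi)<\infty$. We get this from translation boundedness of $\S^{ 2,\beta }$, i.e. $\S^{ 2,\beta }(B(\xi,1))\leqslant C$ uniformly in $\xi$. That bound comes from choosing one fixed $f$ with $| \hat f | \geqslant c$ on $B_{ 1}$ and applying it to the modulated functions $e^{ i\xi\cdot x}f$. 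Summing over unit cells then gives $\sum_{ k}\|k\|^{ -2}$ over $\mathbb{Z}^{ 2}$. This sum diverges logarithmically in 2D, so this part of the argument does not work as stated.

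This convergence at infinity in 2D is the main obstacle. I expect to resolve it by matching precisely the definition in \cite[(1.28)-(1.30)]{HueLebl}, where the energy is renormalised (smeared charges) and only the small-frequency behaviour matters. The paper indeed defines finite Coulomb energy through \eqref{eq:finite-Coul}, stated with the restriction to $B_{ \varepsilon }$, which is what I will verify.

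\textbf{Linear number rigidity.} This is Lemma \ref{lm:NR} with $d = 2$, since $\S^{ 2,\beta }(B_{ \varepsilon })\leqslant C\varepsilon ^{ 4} = C\varepsilon ^{ 2d}$ is exactly the hypothesis used in its proof.
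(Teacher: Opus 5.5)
Your argument is essentially the paper's: $2$-hyperuniformity of smooth statistics, then $\S^{2,\beta}(B_\varepsilon)=O(\varepsilon^4)$, the dyadic summation for the energy near $0$, \cite[Thm 1]{HueLebl} for the Wasserstein bound, and Lemma~\ref{lm:NR} for rigidity; the only difference is that you derive the spectral bound directly from the spectral identity, where the paper cites \cite[Proposition 2.2]{survey-hu}. The high-frequency worry you raise is not a real obstacle: the integral of $\|\xi\|^{-2}$ against $\S^{2,\beta}$ over all of $\mathbb{R}^2$ is infinite for any stationary planar point process, because its spectral measure looks like the intensity times Lebesgue measure at infinity, so the relevant hypothesis is the small-frequency one stated in the corollary, and the paper applies \cite[Thm 1]{HueLebl} to it exactly as you end up doing.
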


In dimension $ d\geqslant 3$, the questions have not the same relevancy. It is standard that a point process has finite $ \W_{ p}^{ p}$-distance to Lebesgue measure (the homogeneous Poisson process does  if and only if $ d\geqslant 3$), and  $ k$-rigidity typically does not occur in dimension $ d\geqslant 3$  \cite{Lr24}. An interesting question, in dimension $ 2,3$ or more, would be to have exponential tails on the typical transport cost, like for the zeros of the planar GAF  \cite{NSV}.

\begin{proof}The consequence of Theorem  \ref{thm:3D-Coul-finite}  is $ 2$-hyperuniformity,  i.e. $  \textrm{Var}\left(\P^{2,\beta }(f_{ R})\right)=O(1)$ for sufficiently  smooth $ f$. Hence      \cite[Proposition 2.2]{survey-hu} yields that the spectral measure of $ \P^{2,\beta }$ satisfies $ \S(B_{ \varepsilon }) = O(\varepsilon ^{ 4})$. 
\begin{itemize}
\item This readily  implies finite Coulomb energy.
\item The property $ \W_{ 2}^{ 2}(\P^{ 2,\beta },\Leb)$ is  a consequence of the finite Coulomb energy by  \cite[Thm 1]{HueLebl}.
\item Linear number rigidity follows from Lemma  \ref{lm:NR}.

\end{itemize}
\end{proof}

Linear  number rigidity means that the moment of order $ 0$ of the finite measure $ \P1_{ B(0,1)}$ can be perfectly interpolated from $ \P1_{ B(0,1)^{ c}}$. Some models, in particular the GAF zeros  \cite{GP17}, experience $ k$-rigidity for some $ k\geqslant 1$, meaning the moments of order $ k$  can also be interpolated.
It was shown in  \cite{Lr24} that in many cases, linear $ k$-rigidity is equivalent to  the non-integrability in $ 0$ of $ \|\xi\|^{ 2k}\s(\xi)^{ -1}$, in particular this equivalence holds for point processes which are rotationally invariant, but formally one needs more information on the structure factor.   For $ d = 2,3,$ it is expected that the exponent $ 2$ is optimal, i.e. the spectral density (if it exists) should satisfy $    \mathsf s(\xi)\sim \|\xi\|^{ d-2}  $, which is consistent with the bound $ \S(B_{ \varepsilon }) = O(\varepsilon ^{ d + 2})$ derived above in dimension $ 2$. Hence $ \|\xi\|^{ 2}\s(\xi)^{ -1}$ is likely integrable in $ 0$, meaning that $ \P^{ 2,\beta }$ is not $ 1$-rigid. In dimension $ 3$, it would yield that $ \s(\xi)^{ -1}$ is  integrable, and thereore not linearly number rigid.

\subsubsection{Non-asymptotic results}   

Let $ \maxscale ,\delta >0$ with $ \delta  + \maxscale(d + 2)<\frac{ 1}{d}.$ We work    at some maximal scale $ R_{ N} = N^{\maxscale }$ and define $ \varepsilon _{ N} = N^{ -\delta }$.
To deal with boundary effects, we work in $   B_{ \rho _{ N}^{ -}}$. 
We can also exploit the fact that in  \cite{Serfaty-HU},  $ \beta $ is allowed to vary with $ N$, but we have to set a lower value $ \beta ^{*}>0$ to conserve the estimates at the microscopic scale.
\begin{theorem}
\label{thm:3D-Coul-finite} Let $d\in \{2,3\}, \beta ^{*}>0 $.   Let $ N\geqslant 1, R\in [1,N^{\maxscale }],\varepsilon _{ N} = R^{ -\delta },\beta = \beta  _{N}\geqslant \beta ^{*}.$
Then for   $ \varphi \in \F_{ \gamma }(B_{ 1}),\gamma \in (0,1]$,    
\begin{align*}
  \sup_{ x \in B(0,\rho ^{-}_{N})}   \overline{  \textrm{Var}}^{ \varepsilon _{ N}}\left(\X_{ N}^{ d,\beta  }(\tau _{ x } \varphi _{ R})\right)\leqslant C_{ \varphi }
  \begin{cases} 
  R^{ 2-\gamma }$  if $d = 2\\
  R^{ 3-\gamma }\ln(R)^{ {\bf 1}\{ \gamma  = 1 \}}$  if $d = 3
   \end{cases}\\
   \end{align*}
   where $C_{ \varphi }$ does not depend on $ R,N.$
 \end{theorem}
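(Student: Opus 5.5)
The plan is to reduce Theorem \ref{thm:3D-Coul-finite} to Theorem \ref{thm:non-asymp}-(i), applied with $\alpha=2$ if $d=2$ and $\alpha=1$ if $d=3$. Once hypothesis \eqref{eq:ass-asymp-stat} is verified for one fixed non-negative, non-degenerate $f\in\C_c^\infty(B_1)$, uniformly over $x$ in the bulk and over $R\in[1,N^{\maxscale}]$, the conclusion \eqref{eq:result-main-non-asymp} gives exactly $V_{\alpha,\gamma}(R)$. Indeed $V_{2,\gamma}(R)=R^{2-\gamma}$ for $\gamma\le 1$, and $V_{1,\gamma}(R)=R^{3-\gamma}\ln(R)^{{\bf 1}\{\gamma=1\}}$.

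A geometric point must be checked: the supremum is over $x\in B(0,\rho^-_N)$. This set must sit inside $\Sigma^{2\varepsilon_N N^{1/d}}$ for a suitable choice of $\Sigma$. I would take $\Sigma=B(0,\rho_+N^{1/d}-\tfrac c2 N^{1/(d+2)})$, with the equilibrium density normalised to $1$ after a harmless rescaling. The margin between $B(0,\rho_N^-)$ and $\partial\Sigma$ is then of order $N^{1/(d+2)}$. We need this margin to exceed $2\sqrt d\,\varepsilon_N N^{1/d}=2\sqrt d\,N^{1/d-\delta}$. If it does not, I would shrink $\Sigma$ further, or enlarge $\delta$ within the constraint $\maxscale(d+2)+\delta<1/d$. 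Any adjustment of $\varepsilon_N$ in the statement, where $R^{-\delta}$ versus $N^{-\delta}$ appears, should be absorbed at this stage.

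The main work is extracting \eqref{eq:ass-asymp-stat} from Serfaty \cite{Serfaty-HU}. I would split
\[
\mathbf E\left[\overline{\X_N}(\tau_xf_R)^2\right]=\textrm{Var}\left(\X_N(\tau_xf_R)\right)+\left(\mathbf E\X_N(\tau_xf_R)-\Leb(f_R)\right)^2 .
\]
Serfaty's results control the fluctuation $\X_N(\tau_xf_R)-\mu_N(\tau_xf_R)$ for smooth $f$ at mesoscopic scales $R\ge 1$ and centres in the bulk, with constants uniform in $\beta\ge\beta^*$. Her exponential moment or Laplace-transform bounds give a second-moment bound of order $R^{2d-4}$ after converting to our microscopic scaling. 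This equals $R^{d-2}$ (that is, $O(1)$) for $d=2$. For $d=3$ it is $R^{2}=R^{d-1}$.

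This second-moment bound controls the variance and also the squared bias relative to $\mu_N$. Since $\mu_N$ is a constant multiple of Lebesgue measure on the equilibrium ball, and $\tau_xf_R$ is supported in the bulk, recentring around $\Leb$ is exact up to that normalising constant. I would fold the constant into the definition of $\overline{\X_N}$, as was done with $\frac1\pi\Leb$ for Girko matrices. This is the quantity referred to as \eqref{eq:fluct-exp} in the paper.

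I expect the main obstacle to be the precise bookkeeping in this translation. First, Serfaty's estimates are stated for test functions at scale $\ell$ in the blown-down coordinates, with a lower bound on $\ell$ in microscopic units. They must be valid down to $R\asymp 1$ uniformly in $\beta\ge\beta^*$; this is why $\beta^*$ appears. Second, the bulk condition in \cite{Serfaty-HU} must be compatible with centres up to distance $cN^{1/(d+2)}$ from the edge. If the available estimate only holds for $R\ge R_0$, I would cover $R\in[1,R_0]$ by the trivial local bound $\mathbf E[\X_N(\tau_xf_R)^2]=O(1)$, which comes from the local law or from the same exponential moment controls. With \eqref{eq:ass-asymp-stat} in hand, Theorem \ref{thm:non-asymp}-(i) concludes the proof, with $C_\varphi$ depending only on $\varphi,f,d$ and the uniform constant $C_f$.
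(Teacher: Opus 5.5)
Your proposal follows the paper's proof. The paper reproduces Serfaty's Corollary~2.2, which gives exponential moments of $\fluct(R^{2-d}\tau_x f_\ell)$ down to the microscopic scale because $\beta\geqslant\beta^*$, deduces $\mathbf E[\overline{\X_N}(\tau_x f_R)^2]\leqslant cR^{2d-4}$ (that is, \eqref{eq:ass-asymp-stat} with $\alpha=2$ for $d=2$ and $\alpha=1$ for $d=3$), and concludes with Theorem~\ref{thm:non-asymp}-(i).

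The bulk-compatibility check you flag is a real issue, and the paper does not address it either; your suggested remedies do not resolve it in general. Shrinking $\Sigma$ only makes it worse. Enlarging $\delta$ requires $\delta>2/(d(d+2))$, which the constraint $\maxscale(d+2)+\delta<1/d$ permits only when $\maxscale<(d+2)^{-2}$. Otherwise the supremum must be restricted to centres at distance $\gtrsim N^{1/d-\delta}$ from the edge of the droplet.
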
  
 
 This result therefore formally prevents the number variance in balls to have a power law growth $ \geqslant R^{ 1  + a},a>0$, outside a   set of centers $ x$ with vanishing density. See Remark~\ref{rk:local-averag} for a discussion explaining why the local averaging $ \overline{\mathrm{Var}}^{\varepsilon_N} $ should not be interpreted as a smoothing procedure.

\subsubsection{Proofs of theorems \ref{thm:Coul-infinite} and \ref{thm:3D-Coul-finite}}   

Let us explain how to deduce these proofs from theorems \ref{thm:non-asymp} and \ref{thm:main-asymp} and the results of  \cite{Serfaty-HU}.\\
    
Write for simplicity $  \X_{ N} = \X_{ N}^{ d,\beta }.$
Let us start by reproducing the result of  \cite{Serfaty-HU}, Corollary    2.2, orginally formulated for the rescaled version $\hat\X_{ N} =  N^{ -1/d}\X_{ N},$ in a set of the form $$ \hat \Sigma  = B(0,\rho _{ N}^{ -}+ R_{ N}).$$

Introduce the reduced scale $\ell = N^{ -1/d}R\in [N^{ -1/d},1].$ In the notation of  \cite{Serfaty-HU}, since $ \beta \geqslant \beta ^{ *}$, $ \rho _{ \beta }\leqslant \rho _{ \beta ^{ *}}<\infty $, the results apply with $ \ell$   down from the microscopic scale $ \ell  = \rho _{ \beta ^{ *}}N^{ -1/d}$   (\cite[(2.17)]{Serfaty-HU}). Let $ f$ a fixed    $ \mathscr{C}^{ \infty }$-smooth non-negative function supported by $ B_{ 1}$, and 
\begin{align*}
 f_{ \ell}(x) = f(x/\ell)
\end{align*}
so that in  \cite{Serfaty-HU}'s notation $  | f _{ \ell}| _{ \mathscr{C}^{ k}}\leqslant C_{ f}\ell^{ -k}$ for $ k\in \mathbb{N}.$ Without loss of generality and for notational simplification we assume that $ f$ is rescaled such that in Theorem \ref{thm:non-asymp}, $ \rho _{ f} = 1,R^\circ _{ f}  =  1$.
For   $ R>0,$ $  { \rm supp}(\tau _{ x}f_{ R})\subset B(x,R)   $.
 For any rescaled function $ g_{ \ell}$ with support in $  \hat  \Sigma $,  \cite{Serfaty-HU} uses the notation\begin{align*}
\fluct (g_{ \ell}) = \overline{\X_{ N}}(g_{ R})\end{align*}
 and their Corollary 2.2 gives with $ \tau = 1$, when $  { \rm supp}(\tau _{ x}f_{ \ell})\subset \hat \Sigma ,   $
\begin{align*}
 \mathbf E \left[
\exp( |  \text{\rm{fluct}}( R^{ 2-d}\tau _{ x}f_{ \ell}) | )
\right]\leqslant C_{ f}.
\end{align*}
In particular
\begin{align*}
 \mathbf P (  R^{ 2-d} | \text{\rm{fluct}}(\tau _{ x}f_{ \ell})  | >t) = \mathbf P (\exp(R^{ 2-d}  | \text{\rm{fluct}}(\tau _{ x}f_{ \ell}) | )>e^{ t})\leqslant C_{ f}e^{ -t}.
\end{align*}
As a result, for $ x$ such that $ B(N^{ 1/d}x,R)\subset   \Sigma ,$
\begin{align}
\label{eq:fluct-exp}
\mathbf E\left[
  \X_{ N}(\tau _{ N^{ 1/d}x}f_{ R})   
\right]\leqslant  \displaystyle\int_{}f_{ R} +  \mathbf E  | \text{\rm{fluct}}(\tau _{ x}f_{ R}) | \leqslant& c (R^{ d} + R^{ d-2}) = O(R^{ d})\\
 \label{eq:var-serf}     \mathbf E \left[\;\overline{\X_{ N}}(\tau _{N^{ 1/d} x}f_{ R})^{ 2}
\right] = \mathbf E \left[
\fluct(\tau _{ x}f_{ \ell})^{ 2}
\right]\leqslant& cR^{ 2d-4} =  
   \begin{cases} 
   O(1)$  if $d = 2\\
   O(R^{ 2})$  if $d = 3.
    \end{cases}\end{align}
It exactly means that the assumptions of Theorems \ref{thm:non-asymp} and \ref{thm:main-asymp} are satisfied with $ \alpha  = 2$ in dimension $ d = 2$ and $ \alpha  = 1$ in dimension $ d = 3.$  This proves respectively  Theorems    \ref{thm:3D-Coul-finite}
and
\ref{thm:Coul-infinite}.

 \subsection{Periodic one-dimensional Riesz gases}
 \label{sec:riesz}
 
 Let us now work on the more general model of Riesz gases. In  those models, points interact on $ \mathbb{R}^{ d}$ through the potential $  \psi _{ s}(x) = \|x \| ^{ -s}$ (and $ \psi _{ 0}(x) = -\ln(\|x\|)$), which therefore coincides with the Coulomb gas for  $ s = d-2$ up to the constant.
 We work here in dimension $d = 1, $  with $s\in (0,1),$ to build upon the results of Boursier  \cite{boursier}. We work on the one-dimensional torus $  \mathbb  T  ^{ 1}_{ N} $ with $ N$ particles. 
 The periodic Riesz potential with exponent $ s$ is   defined by
\begin{align*}
 \psi_{ N,s}(x) = \lim_{ q\to \infty }\left(
\sum_{k = -q}^{ q} \frac{ 1}{ | kN + x | ^{ s}}-\frac{ 2q^{ 1-s}}{N^{ s}(1-s)}
\right) .
\end{align*}
It is also characterised in the distributional sense with the fractional Laplacian $$ 
\begin{cases} 
\int_{ }\psi _{ N,s} = 0,\\ (-\Delta )^{ \frac{ 1-s}{2}}\psi _{ N,s} = c_{ s}(\delta _{ 0}-\frac{ 1}{N})
 \end{cases}$$ for some constant $ c_{ s}>0$, see  \cite{boursier} for details. Consider the random configuration $ \X_{ N}^{ s,\beta }$ with a.s. $ N$ points on $   \mathbb  T  _{ N}$ whose law is defined by   \eqref{eq:boltz}-\eqref{eq:hamil} with $ \psi  = \psi _{ N,s},C(X) \equiv 1$.  Note that  in \cite{boursier}  the macroscopic scale $ N^{ -1}\X_{ N}^{ s,\beta }$ is considered instead

  \begin{theorem}
  \label{thm:riesz}
  Let $ s\in (0,1),\beta >0.$
  \begin{itemize}
\item  Let $ \gamma \in (0,1],\varphi \in \F_{ \gamma }^{ *}$. Then there is $ C_{ \varphi }$ not depending on $ N,R$ such that
\begin{align*}
  \textrm{Var}\left(\X_{ N}^{ s,\beta }(\varphi _{ R})\right) \leqslant C_{ \varphi }V_{ 1-s,\gamma  }(R).
\end{align*}
\item Any  limit point $ \P^{ s,\beta }$ of $ \X_{ N}^{ s,\beta }$ is stationary and  $ (1-s)$-hyperuniform, hence for $ \gamma >0,\varphi \in \F_{ \gamma }$, 
\begin{align*}
  \textrm{Var}\left(\P^{ s,\beta }(\varphi _{ R})\right) = O(V_{ 1-s,\gamma }(R)).
\end{align*}
\end{itemize}
 
  \end{theorem}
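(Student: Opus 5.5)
The periodic Riesz gas $\X_{ N}^{ s,\beta }$ on $\mathbb T^{1}_{N}$ is invariant under toroidal translations, because the Hamiltonian depends only on the differences $x_{i}-x_{j}$ modulo $N$ and $C\equiv 1$. It has exactly $N$ points, so $\mathbf E[\#\X_{N}^{s,\beta}] = N$, and by stationarity its intensity is $\Leb[1]$ on $\T[N]$ (with $d=1$). The plan is therefore to apply Theorem \ref{thm:main-periodic} with $d=1$, $\alpha = 1-s$ and $\maxscale = 1/d = 1$, i.e.\ at all scales $R\in[R^{\circ},N]$. The only missing input is the smooth-kernel bound: for one fixed non-negative $f\in\C_{c}^{\infty}(\mathbb R)\setminus\{0\}$,
\begin{align*}
\textrm{Var}\left(\X_{N}^{s,\beta}(f_{R})\right)\leqslant C_{f}R^{1-(1-s)} = C_{f}R^{s},\qquad R\in[R^{\circ},N],
\end{align*}
uniformly in $N$.

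To obtain this bound I will use Boursier \cite{boursier}. That work proves, in the macroscopic scaling $N^{-1}\X_{N}^{s,\beta}$ on the unit torus, that the fluctuation of a smooth test function localized at a mesoscopic scale $\ell\geqslant N^{-1}$ has variance of order $(N\ell)^{s}$. This is equivalently expressed through Laplace-transform or CLT estimates for $\mathrm{fluct}(f_{\ell})$, with a variance given by the $H^{(1-s)/2}$-type seminorm $\|(-\Delta)^{(1-s)/4} f_{\ell}\|^{2}\asymp (N\ell)^{s}$ after rescaling. Setting $R=N\ell$ translates this into the displayed bound.

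I expect this translation to be the main obstacle. I have to check three things:
\begin{itemize}
\item the estimates of \cite{boursier} hold uniformly down to the microscopic scale $\ell\asymp N^{-1}$, possibly for $R\geqslant R^{\circ}$ only;
\item they give a variance bound (for instance via exponential moments, as was done for \cite{Serfaty-HU} in \eqref{eq:var-serf}), not only a CLT;
\item the expectation term is harmless. By exact stationarity $\mathbf E\,\X_{N}(f_{R}) = \int f_{R}$, so no centering error arises.
\end{itemize}
If only exponential-moment bounds of the form $\mathbf E\exp(t R^{-s/2}|\mathrm{fluct}|)\leqslant C$ are available, I will convert them into a second-moment bound exactly as in \eqref{eq:fluct-exp}--\eqref{eq:var-serf}. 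Once this bound is in hand, Theorem \ref{thm:main-periodic} gives $\textrm{Var}(\X_{N}^{s,\beta}(\varphi_{R}))\leqslant C_{\varphi}V_{1-s,\gamma}(R)$ for every $\varphi\in\F^{*}_{\gamma}$. The range $R<R^{\circ}$ is absorbed into the constant, using the crude bound $\textrm{Var}\leqslant \mathbf E[\X_{N}(|\varphi_{R}|)^{2}]$, which is bounded for $R$ in a compact range by local moment bounds. This is the first bullet.

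For the second bullet, tightness follows from the bounded intensity, as in Section \ref{sec:coul-infinite}. Any limit point $\P^{s,\beta}$ of $\X^{s,\beta}_{N}$, viewed on $\mathbb R$ through the fundamental domain $[-N/2,N/2)$, is stationary. Indeed, for fixed $x$ and compactly supported test functions, toroidal and Euclidean translations coincide once $N$ is large, so the laws of $\tau_{x}\P$ and $\P$ agree on vague-continuous functionals. The Euclidean hypothesis of Theorem \ref{thm:main-asymp} reads
\begin{align*}
\sup_{x}\mathbf E\left[\,\overline{\X_{N}}(\tau_{x}f_{R})^{2}\right]\leqslant C_{f}R^{s}\qquad\text{for } R\in[1,N/4].
\end{align*}
This is exactly the bound above, since for $R\leqslant N/4$ the periodized statistic \eqref{eq:extend-statslins-torus} coincides with the ordinary one. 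Applying Theorem \ref{thm:main-asymp} with $d(x_{N},\partial\Sigma)$ replaced by the unrestricted torus, and using stationarity through \eqref{eq:concl-asymp-main-statio}, gives $\textrm{Var}(\P^{s,\beta}(\varphi_{R})) = O(V_{1-s,\gamma}(R))$ for $\varphi\in\F_{\gamma}$ continuous $\Leb$-a.e.

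Alternatively, for the second bullet I could pass to the limit $N\to\infty$ directly in the finite bound of the first bullet using Fatou's lemma, as in the end of the proof of Theorem \ref{thm:main-periodic}. Convergence in law of $\X_{N}(\varphi_{R})$ requires a.e.\ continuity of $\varphi$, and the second moment is lower semicontinuous along the subsequence. Subtracting the squared means $(\int\varphi_{R})^{2}$, which are equal in the limit, then yields the claim. This route is simpler, and I would adopt it if the stationarity argument turns out to be delicate.
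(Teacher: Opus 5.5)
Your proposal is correct and follows the paper's proof. The paper reads the smooth-kernel input $\mathrm{Var}(\X_N^{s,\beta}(f_R))\asymp R^{s}$ off \cite[Theorem 2]{boursier} with $\ell=R/N$, applies Theorems \ref{thm:main-periodic} and \ref{thm:main-asymp} with $\alpha=1-s$, and obtains stationarity of limit points by passing toroidal translation invariance to the vague limit. Your fallback for the second bullet, lower semicontinuity of the variance via Fatou from the uniform-in-$N$ finite bound, is also valid and is the mechanism already used inside the proof of Theorem \ref{thm:main-asymp}.
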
  

Theorem  \ref{thm:riesz}  shows that the transfer principle applies  without reduced scales range.
Many irregular kernels are already covered by  \cite[Theorem 2]{boursier}. Still, Theorem  \ref{thm:riesz} covers $ \F_{ \gamma }^{ *}$, which contains kernels with a singularity of the form $  | x-x_{ 0} | ^{ -a}$ with 
any   exponent $ a\in (0,1/2)$, regardless of the value of $ s$, whereas in view of  \cite[Remark 1.2]{boursier}, the conditions in  \cite{boursier} require $ a<s/2$ ($ \alpha $ in her notation); note that  \cite{boursier} was primarily interested in the CLT.

\begin{proof}
We write $ \X_{ N} = \X_{ N}^{ s,\beta }$ for the sake of simplicity.
Beware the  notation subtlety: ``$ X_{ N} $''  in  \cite{boursier} corresponds to the macroscopic scaling $   N^{ -1}\X_{ N}$ in our notation, we stick to the $ \X_{ N}$ notation here, in agreement with the rest of the paper. Since $ \X_{ N}$ is invariant under translations and has $ N$ particles in $ [0,N]$, it has intensity $ 1:$ for $ \varphi \in \B_{ c}(  \mathbb{R})$,
\begin{align*}
\mathbf E\left[
  \X_{ N}(\varphi )
\right] = \displaystyle\int_{}\varphi 
\end{align*}
(see   \eqref{eq:extend-statslins-torus} to deal with integrable functions not supported by $ [0,N]$). 

Let a non-negative function $ \xi  \in \C_{ c}^{ \infty }([-1/2,1/2])  \setminus \{0\}$  and some scale $ R>0.$
We consider the fluctuations, written in Boursier's notation, with $ \ell = R/N,$
\begin{align*}
 \overline{\X_{ N}}(\xi  _{ R}) = \X_{ N}(\xi  (./R))-R^{ d}\int_{ }\xi   = \fluct(\xi (\ell^{ -1}\cdot )).
\end{align*}In particular, 
\begin{align*}
  \textrm{Var}\left(\X_{ N}(\xi  )\right) = \mathbf E \left[\;
\overline{\X_{ N}}(\xi   )^{ 2}
\right] =  \textrm{Var}\left(\fluct(\xi (\ell^{ -1}))\right).
\end{align*}

We work with $ R\in [1,N]$, in agreement with the requirement $ \ell\in (0,1]$ from  \cite{boursier}.
The function $ \xi (\ell^{ -1}\cdot )$ satisfies   \cite[Assumption 1]{boursier}, for any exponent $ \alpha $ in her notation (note that this is unrelated to the notation $ \alpha $ in the current article, reserved for the  hyperuniformity index, we choose by default $ \alpha  = 0$ for her assumption). Therefore  \cite[Theorem 2]{boursier} gives
\begin{align*}
\mathbf E \left(\overline{\X_{ N}}(\xi  _{ R})^{ 2}\right) =   \textrm{Var}\left(\fluct(\xi (\ell^{ -1}\cdot ))\right) =  (N\ell_{\lambda })^{ s}(\sigma _{ 0}^{ 2}(\xi ) + O(\ell_{ N}^{ 2-s})) + O(N\ell_{ N})^{ 2s-1}\asymp (N\ell_{ N})^{ s} = R_{ N}^{ s}
\end{align*}
and here $ \sigma _{ 0}^{ 2}(\xi )>0.$
 Therefore we can apply Theorems \ref{thm:main-asymp} and \ref{thm:main-periodic} with the  hyperuniformity index $ \alpha  = 1-s$. For Theorem \ref{thm:main-asymp}, the translation invariance of $ \X_{ N}$ yields that for $ \varphi \in \C_{ c}^0(\mathbb{R}), x\in  \mathbb  T  _{ N}, \X_{ N}(\varphi ) \stackrel{(d)}{=}  \X_{ N}(\tau _{ x}^{ N}\varphi )$, and this is easily passes to the limit in the vague topology to show that any limit point is stationary.
 
\end{proof}

By Theorem  \ref{thm:riesz}, the  spectral measure $ \S$ of any limit point $ \P^{ s,\beta }$ satisfies $\S(B_{ \varepsilon })\leqslant \varepsilon ^{ 1 + (1-s)}$ and this is the optimal exponent by  \cite[Prop. 2.1]{survey-hu}. It is likely that the spectral density satisfies $ \s(\xi)\sim  | \xi | ^{ 1-s}$, but we currently have no means to prove it as our estimates relate to $ \S$, not $ \s$. Yet this would imply 
\begin{align*}
 \int_{ -\varepsilon }^{ \varepsilon }\frac{ 1}{\s(\xi)}d\xi<\infty .
\end{align*}
This would corroborate by   \cite[Theorem 2]{Lr24} that the system is not-number rigid.

   \section{Spectral measure and general transfer principle}
   \label{sec:prfs}
   
   \label{sec:proof-abstract}
Recall the notation: $ \T = [-\lambda /2,\lambda /2)^{ d}$ is the $ d$-dimensional torus with volume $\lambda ^{ d}$ centred in $ 0$ with toroidal metric 
\begin{align*}
 \|x-y\|_{ \T} = \inf_{ \k\in \mathbb{Z} ^{ d}}\|\lambda \k + x-y\|
\end{align*}( where $ \|\cdot \|$ is the usual Euclidean distance) and the associated toroidal topology,  let $ \tau _{ x}^{\lambda }: \T \to \T$ the corresponding periodic translation operator.

We endow the space of non-negative Borel measures $  \mathcal  B_{\lambda }$ on $ \T$ with the weak topology, generated by the mappings $$ \mu  \mapsto  \mu (f) = \int_{ }fd\mu $$ for $ f$ in the space of continuous functions on $ \T.$ 
 A random non-negative measure $ \M$ on $ \T$ is a random element of $  \mathcal   B_{\lambda }$ for the $ \sigma $-algebra induced by the weak topology on some probability space $ (\Omega ,\mathbf P )$. We make the running assumption that $ \M$ has a finite second moment, i.e. $ \mathbf E\left[
 \M(\T)^{ 2}
\right]<\infty $, this is obviously satisfied for examples considered in this article because $ \M$ is a point process with a fixed number of points (depending on $ \lambda $). Say $ \M$ is  stationary if   $\tau _{ x}^{\lambda } \M\stackrel{(d)}{=}  \M$ for $x\in \T $. 
 A canonical way to obtain a stationary random measure is to start from a random or deterministic measure $ \M_{ 0}$ on $ \T$ and apply a translation $ \M = \tau _{ U_{\lambda }}^{\lambda }\M_{ 0}$, where $ U_{\lambda }$ is independent from $ \M_{ 0}$ and uniformly distributed on $ \T$. We actually work in the  context of  {\it weakly stationary}  $ \M$, i.e. $ \M$ is only assumed to be invariant   with respect to the first and second order marginals: for $ f\in \C^{ 0}(\T),x\in \T$, $$ \mathbf E \left[\M(
\tau _{ x}^{ \lambda }f)
\right] = \mathbf E \left[
\M(f)
\right],  \textrm{Var}\left(\M(\tau _{ x}^{ \lambda }f)\right) =  \textrm{Var}\left(\M(f)\right).$$

For $ \gamma >0$, recall that $ \F  _{  \gamma }$ is the class of functions $ \varphi\in \B_{ c}(\mathbb{R}^{ d})$ with the Fourier polynomial decay
$O(\|\xi\|^{ - \frac{ (d + \gamma )}{2} })  $. 
Typically, by  \eqref{eq:bessel}, ball indicators are in $ \F_{ 1}$, and for a function $ f\in \C_{ c}^{ \infty }(\mathbb{R}^{ d}), $  $f\in \F_{ \infty }: = \bigcap_{ \gamma >0} \mathscr  F_{\gamma  }$.
Note that for any $ \gamma >0,$ the unit square indicator is not in $  \mathscr  F_{ \gamma }$ for $ d\geqslant 2$ since, with $ \xi =  (\pi k,0,\dots ,0 ),k\in \mathbb{N} ,$ the optimal polynomial bound is 
\begin{align*}
 \widehat{ 1_{ [-1,1]^{ d}}}(\pi k,0 ,\dots ,0) = O( k^{ -1})\sim O(\|\xi\|^{ -1}),
\end{align*}
and indeed most results of this paper do not apply to the square indicator in all generality, but they should be valid under an asymptotic covariance decay  assumption, see a discussion at \cite[Proposition 2.2]{Coste} or  \cite[Section 2.2]{survey-hu}. By  \cite[Theorem 2.16]{IoseElf}, the indicator of a compact, convex and symmetric subset $ B$ is in $\bigcup _{ \gamma >0} \F_{ \gamma }$ if its
boundary is (d + 3)/2-times differentiable and its principal curvatures do not vanish. 

Extend $ \M$ to functions on $ \mathbb{R}^{ d}$ through periodisation: for $ \varphi \in L^{ 1}(\mathbb{R}^{ d})$, let
\begin{align*}
 \M(\varphi ) := \sum_{\k\in \mathbb{Z} ^{ d}}\int_{ \T}\varphi (\lambda \k + x)d\M(x),
\end{align*}
well defined a.s. by stationarity because, denoting by $a  = \mathbf E \left[
 \M(\T)
\right]$ the intensity, Fubini identity yields
\begin{align*}
 \mathbf E\left[
 \M( | \varphi  | )
\right] = a \sum_{\k\in \mathbb{Z} ^{ d}}\int_{ \T}  | \varphi (\lambda \k + x) | dx =a  \int_{ } | \varphi  | <\infty .
\end{align*}

Define for $ R>0 $ the rescaled version $ \varphi _{ R}(x) =  \varphi (x/R),x\in \mathbb{R}^{ d}$, so that $ \M(\varphi _{ R})$ is well defined for all $ R>0$.  
\subsection{Spectral measure}

We introduce the dual Fourier space  $ \Z: = 2\pi\lambda ^{ -1}\mathbb{Z} ^{ d}$. 
A non-negative measure $ \S$ on $ \Z$ is represented through an integral $ \int_{ \Z}(...)d\S$ rather than a discrete sum. Introduce the Fourier transform on $ \T$\begin{align*}
 \what{f}{\lambda }(\xi) := \int_{ \T}f(x)e^{ -i\xi\cdot x}dx, \xi\in  \Z.
\end{align*}

 \begin{proposition}
 \label{lm:spectral}
 Let $ \M$ a weakly stationary random measure on $ \T.$
 There exists a non-negative measure $ \S$ on $ \Z$ such that
for $ f,g: \T\to  \mathbb C $ bounded,
\begin{align}
\label{eq:PS}
  \textrm{Cov}\left(\M(   f),\M(g)\right) =     \int_{ \mathbb{Z} _{\lambda }}  \what{f}{\lambda }  \overline{  \what{g}{\lambda }}d\S.
\end{align}
$ \S$ is called the  {\it spectral measure} of $ \M.$ 
 \end{proposition}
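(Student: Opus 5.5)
The plan is a Herglotz/Bochner-type argument on the finite abelian dual group, made explicit through Fourier series on $\T$. First, expand bounded functions on $\T$ in Fourier series. For $\k\in\Z$ write $e_\k(x)=e^{i\k\cdot x}$, so that $\lambda^{-d}\sum_{\k\in\Z}\what{f}{\lambda}(\k)e_\k = f$ in $L^2(\T)$. Next introduce the random Fourier coefficients of the centred measure,
\[
Z(\k) := \M(e_{-\k}) - \mathbf E[\M(e_{-\k})] = \int_{\T} e^{-i\k\cdot x}\,(\M-\mathbf E\M)(dx),
\]
which are square integrable since $|Z(\k)|\leqslant \M(\T)+\mathbf E\M(\T)$ and $\mathbf E[\M(\T)^2]<\infty$. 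The candidate for the spectral measure is the diagonal second moment
\[
\S(\{\k\}) := \lambda^{-2d}\,\mathbf E\big[|Z(\k)|^2\big]\geqslant 0,\qquad \k\in\Z .
\]
This is manifestly non-negative.

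The key step is to show that weak stationarity makes the $Z(\k)$ uncorrelated. For $y\in\T$ we have $\tau^\lambda_y e_{-\k} = e^{i\k\cdot y}e_{-\k}$, because $\k\in\Z$ makes $e_{-\k}$ well defined and $\lambda$-periodic on the torus. By weak stationarity, $\mathrm{Var}(\M(\tau_y^\lambda g))=\mathrm{Var}(\M(g))$. Polarization of this identity, applied to real and imaginary parts of $g=e_{-\k}+c\,e_{-\k'}$ for $c\in\{1,i\}$, gives invariance of the covariance. Hence
\[
\mathbf E[Z(\k)\overline{Z(\k')}] = e^{i(\k-\k')\cdot y}\,\mathbf E[Z(\k)\overline{Z(\k')}]\quad\text{for all } y,
\]
which forces this covariance to vanish for $\k\neq\k'$. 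The polarization is the only delicate point: weak stationarity is stated through variances of real (or complex) statistics, so I would apply it to $\mathrm{Re}$ and $\mathrm{Im}$ separately and recover the sesquilinear form from the quadratic one. I expect this to be the main obstacle, though it is routine.

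It remains to conclude for bounded $f,g$. For trigonometric polynomials, linearity gives
\[
\M(f)-\mathbf E\M(f)=\lambda^{-d}\sum_\k \what{f}{\lambda}(\k)\,Z(-\k)
\]
(with the sign convention adjusted). Orthogonality of the $Z$ then yields \eqref{eq:PS}, after relabelling $\k\to-\k$ in the definition of $\S$. For general bounded $f$, I would first check that $\S$ is a finite measure: $\sum_\k \S(\{\k\})$ is controlled via Parseval applied to Fejér-smoothed approximations together with the bound $\mathrm{Var}(\M(\T))<\infty$. Then approximate $f$ by Fejér means $f_n$, which converge in $L^2(\T)$ and satisfy $\|f_n\|_\infty\leqslant\|f\|_\infty$. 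This approximation gives $\M(f_n)\to\M(f)$ only on the event where the convergence is $\M$-a.e. The main care needed is that $f_n\to f$ merely $\Leb$-a.e., whereas $\M$ may charge null sets. To handle this I would replace $f_n$ by $f_n$ averaged along the stationarity, i.e. use that the intensity $\mathbf E\M$ is a translation-invariant, hence Lebesgue, measure. This gives $\mathbf E\M(\{f_n\not\to f\})=0$, and then dominated convergence in $L^2(\mathbf P)$ applies since $|\M(f_n)|\leqslant\|f\|_\infty\M(\T)$. On the spectral side, $\what{f_n}{\lambda}\to\what{f}{\lambda}$ pointwise with uniform bounds, and $\S$ is finite, so the spectral side converges as well.
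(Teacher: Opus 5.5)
Most of your argument is correct, and it takes a more elementary route than the paper, which goes through Riesz representation on $\T\times\T$, disintegration into a reduced covariance measure and Bochner's theorem on the torus. The following steps are all fine:
the decorrelation of the $Z(\k)$ from weak stationarity;
the identity for trigonometric polynomials;
the convergence $\M(f_n)\to\M(f)$ in $L^2(\mathbf P)$, using that the intensity is a multiple of Lebesgue measure.
You also obtain for free the explicit formula $\S(\{\xi\})=\lambda^{-2d}\textrm{Var}(\what{\M}{\lambda}(\xi))$, which the paper states without proof.

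The gap is in the last step. You claim that $\S$ is a finite measure, and then pass to the limit on the spectral side using only pointwise convergence of $\what{f_n}{\lambda}$ with uniform bounds. That claim is false in general. Take the binomial process $\M=\sum_{j=1}^m\delta_{X_j}$ with $X_j$ i.i.d.\ uniform on $\T$; it is stationary with $\mathbf E[\M(\T)^2]=m^2$. For every $\k\in\Z\setminus\{0\}$ one has $\mathbf E[\M(e_{\k})]=0$ and $\mathbf E|\M(e_{\k})|^2=m$. Hence $\S(\{\k\})=m\lambda^{-2d}$ for all these $\k$, and $\S(\Z)=\infty$. More generally, whenever the second factorial moment measure is absolutely continuous, Riemann--Lebesgue gives $\S(\{\k\})\to\lambda^{-2d}\mathbf E[\M(\T)]>0$ as $\|\k\|\to\infty$. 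So infinite total mass is the rule for point processes, including every model in the paper; only translation boundedness holds, as in Lemma~\ref{lm:trans-bound}. Your suggested proof of finiteness cannot work either. The quantity $\textrm{Var}(\M(\T))=\lambda^{2d}\S(\{0\})$ only sees the atom at $0$. For Fej\'er kernels $F_n$ one has $\textrm{Var}(\M(F_n))=\int_{\Z}|\what{F_n}{\lambda}|^2d\S\to\infty$; in the binomial example it grows like $\|F_n\|_{L^2(\T)}^2$.

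The conclusion survives, but the spectral-side limit must go through monotone convergence (or Fatou), not dominated convergence against a finite measure. This is exactly the device of the paper's appendix, where the mollifier is chosen with $0\leqslant\hat\chi\leqslant 1$ radially non-increasing. Your Fej\'er means already have this structure: $\what{f_n}{\lambda}=w_n\,\what{f}{\lambda}$ with $w_n(\k)=\prod_{j=1}^d(1-|m_j|/(n+1))_+$ for $\k=2\pi\lambda^{-1}(m_1,\dots,m_d)$, and $0\leqslant w_n\uparrow 1$ pointwise on $\Z$. Hence $|\what{f_n}{\lambda}|^2\uparrow|\what{f}{\lambda}|^2$, and monotone convergence gives $\int|\what{f_n}{\lambda}|^2d\S\to\int|\what{f}{\lambda}|^2d\S$. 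Combined with $\textrm{Var}(\M(f_n))\to\textrm{Var}(\M(f))$, this yields $\textrm{Var}(\M(f))=\int|\what{f}{\lambda}|^2d\S<\infty$ for every bounded $f$. The covariance identity then follows by polarization. Alternatively, Fatou gives $\int|\what{f}{\lambda}|^2d\S\leqslant\liminf_n\textrm{Var}(\M(f_n))=\textrm{Var}(\M(f))$, after which $|\what{f}{\lambda}|^2$ is a legitimate dominating function.
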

 Note that in general, one renormalises $ \S$ by $ (2\pi )^{ d}$, but in the current article it would only clutter the notation.
The proof is a classical construction, we give it in  Appendix \ref{app}.  
  We can give the explicit value of $ \S$ (not used in the following): for $ \k\in \mathbb{Z} ^{ d},\xi   \in \Z,$
\begin{align*}
 \S(\{\xi\}) =   \lambda^{ -2d}  \textrm{Var}\left(\what{\M}{ \lambda }(\xi )\right)
\end{align*}   
where $ \what{\M}{ \lambda }(\xi ) = \M(e^{ \imath \langle \xi,\cdot  \rangle})$ is the Fourier-Stieljes transform.
 For the proof of Theorem \ref{thm:abstract-general}, we need to understand the behaviour under rescaling.
 \begin{lemma}
 \label{lm:scaling}
 For $f \in \B_{ c}^{}(\mathbb{R}^{ d}), R>0$,
\begin{align*}
 \textrm{Var}\left(\M(f_{ R})\right)=  R^{ 2d}  \int_{ \Z} | \widehat{ f }(R\xi) | ^{ 2}\S(d\xi ) .
\end{align*}
 \end{lemma}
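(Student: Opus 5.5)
The plan is to apply Proposition \ref{lm:spectral} to the periodised function $f_R$ and then compute the torus Fourier transform of a periodisation. I set $g := \sum_{\k\in\mathbb{Z}^d} f_R(\lambda\k+\cdot)$ restricted to $\T$. By the definition of $\M$ on $L^1(\mathbb{R}^d)$ functions, $\M(f_R) = \M(g)$ almost surely. Since $f$ is bounded with compact support, only finitely many translates are nonzero on $\T$, so $g$ is a bounded function on $\T$. Proposition \ref{lm:spectral} with $f=g$ then gives
\begin{align*}
  \textrm{Var}\left(\M(f_R)\right) = \int_{\Z} \bigl|\what{g}{\lambda}(\xi)\bigr|^2 \S(d\xi).
\end{align*}

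Next I compute $\what{g}{\lambda}(\xi)$ for $\xi\in\Z = 2\pi\lambda^{-1}\mathbb{Z}^d$. The sum over $\k$ is finite, so it can be exchanged with the integral. Substituting $y=\lambda\k+x$ in each term turns the integral over $\T$ into an integral over the translated cell $\lambda\k+\T$. The phase satisfies $e^{-i\xi\cdot(y-\lambda\k)} = e^{-i\xi\cdot y}$ because $\xi\cdot\lambda\k\in 2\pi\mathbb{Z}$; this is exactly where the choice of the dual lattice $\Z$ is used. The translated cells $\lambda\k+\T$ tile $\mathbb{R}^d$, so the pieces recombine into
\begin{align*}
  \what{g}{\lambda}(\xi) = \int_{\mathbb{R}^d} f_R(y) e^{-i\xi\cdot y}\,dy = \widehat{f_R}(\xi).
\end{align*}

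Finally I apply the usual scaling identity. The change of variables $y=Ru$ gives $\widehat{f_R}(\xi) = R^d\hat f(R\xi)$. Squaring the modulus and inserting this into the variance formula yields the claimed factor $R^{2d}\int_{\Z}|\hat f(R\xi)|^2\S(d\xi)$.

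There is no real obstacle here. The only points needing care are the justification that $\M(f_R)=\M(g)$ almost surely (which is the definition, together with finiteness of the sum) and the lattice-phase cancellation. The case where $f_R$ has support larger than $\T$ is handled by the periodisation itself, so no assumption on $R$ relative to $\lambda$ is needed.
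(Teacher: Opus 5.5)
Your proposal is correct and follows essentially the same route as the paper: periodise $f_R$, apply the spectral identity of Proposition~\ref{lm:spectral}, use $e^{-i\xi\cdot\lambda\k}=1$ on $\Z$ together with the tiling of $\mathbb{R}^d$ by translated cells to identify the torus Fourier transform of the periodisation with $\widehat{f_R}$, and conclude with $\widehat{f_R}=R^d\hat f(R\cdot)$. You also check explicitly that the periodisation has only finitely many nonzero terms and is therefore bounded on $\T$, so that Proposition~\ref{lm:spectral} applies; the paper leaves this point implicit.
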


 \begin{proof} 
 First recall  that $ \M(f) = \M( \tilde f)$ where $ \tilde f$ is defined on $ \T$ through
\begin{align*}
 \tilde f(x) = \sum_{\k\in \mathbb{Z} ^{ d}}f(x + \lambda \k).
\end{align*}

 We start from  \eqref{eq:PS}   after a $ R$-rescaling:
\begin{align*}
  \textrm{Var}\left(\M(f_{ R})\right) =  \textrm{Var}\left(\M( \widetilde{ f_{ R}})\right) =   \displaystyle\int_{\mathbb{Z} _{ \lambda }^{ d}}\left|
{\widehat{  \widetilde{  f_{ R}}}}
^{\lambda }(\xi) \right|^{ 2}
\S(d\xi ).
\end{align*}
 
For $ \xi\in  \Z$
\begin{align*}
 \what{ \widetilde{ f_{ R}}  }{\lambda } (\xi)=& \int_{  \T} \widetilde {f_{ R}}  (x)e^{ -i\xi\cdot x}dx\\
  = & \sum_{\k\in \mathbb{Z} ^{ d}}\int_{  \T} f_{ R}  (x + \lambda \k)e^{- i\xi\cdot x}dx
  \\ 
  = & \sum_{\k\in \mathbb{Z} ^{ d}}\int_{  \mathbb  T _{\lambda } ^{ d}} f_{ R}  (x + \lambda \k)e^{ -i\xi\cdot (x + \lambda \k)}dx\text{ \rm{\color{black} because}}e^{- i\xi\cdot \lambda \k} = 1
  \\ 
   = &\int_{ \mathbb{R}^{ d}}f _{ R} (y)e^{ -i\xi\cdot y}dy\\
   = &\widehat{f_{ R}  } (\xi).
\end{align*}
Therefore, on $ \Z,$   $ \what{  \widetilde{ f_{ R}}}{\lambda } = \widehat{ { f_{ R}}} =R^{ d} \hat f(R\cdot ) $ by standard considerations, which completes the proof.
  
 \end{proof}

\subsection{General transfer principle}    
For $ \zeta:\mathbb{R}_{  + }\to \mathbb{R}_{  + }$ a measurable function and $ \gamma >0$, define for $ R>0$
\begin{align*}
 \sigma _{ \zeta,\gamma  }(R) = \int_{ R^{ -d-\gamma }}^{ 1}\zeta (Rt^{ \frac{ 1}{d + \gamma }})t^{ -\frac{ d}{d + \gamma }}dt.
\end{align*}
For instance, if $ \zeta (t) = t^{ -\alpha }$ for some $ \alpha \in \mathbb{R}$, 
\begin{align}
\label{eq:power-sigma-xi}
 \sigma _{ \zeta,\gamma  }(R) = R^{ -\alpha }\displaystyle\int_{R^{ -(d + \gamma )}}^{ 1}t^{ -\frac{ \alpha  + d}{\gamma  + d}}dt = R^{ -\alpha } 
 \begin{cases} 
 O(\ln(R))$  if $\alpha  = \gamma \\
 O(R^{ -(d + \gamma )})^{ 1-\frac{ \alpha  + d}{d + \gamma }}$  if $\alpha >\gamma \\
 O(1)$ if $\alpha  < \gamma 
  \end{cases} = O(R^{ -d}V_{ \alpha ,\gamma }(R)).
\end{align}
Let us now give the central result of this work:
\begin{theorem}
\label{thm:abstract-general}Let $ f\in \B_{ c}^{}(\mathbb{R}^{ d})  \setminus \{0\}$ non-negative. There is $ \rho _{ f}>0$ such that the following holds:

 Let $\lambda >0, \M$ a weakly stationary measure with $\mathbf E\left[
 \M( \T) 
\right]=\lambda ^{ d} $.  Let $ R_{ 0}\geqslant 1$. Assume that for  all $ R\in [1,R_{ 0}]$, 
\begin{align}
\label{eq:main-assum}
  \textrm{Var}\left(\M(f_{ R})\right)\leqslant  | \hat f(0) |^{ 2}  R^{ d}\zeta(R).
\end{align}
Then the structure factor of $ \M$ satisfies for $ \varepsilon \in (0,1)$   
\begin{align*}
 \S(B_{ \varepsilon })\leqslant 2  | \hat f(0) | ^{ 2}\varepsilon ^{ d}\zeta (\rho _{ f}/\varepsilon ).
\end{align*}
Let $ \varphi    \in \F_{ \gamma }$     for some $\gamma >0$. Then   for all $ R\in [1,R_{ 0}]$,    { $ \mathbf E\left[
 \M(\varphi _{ \rho _{ f}R})^{ 2}
\right]<\infty $ and}
\begin{align}
\label{eq:main-result}
  \textrm{Var}\left(\M(\varphi _{ \rho _{ f}R})\right) \leqslant  | \hat f(0) | ^{ 2}  \| \hat \varphi \|_{ \infty } ^{ 2} R^{ d}(\zeta (R) + c_{ d,\gamma }( 1 + \zeta (1))R^{ -\gamma } + \sigma _{ \zeta,\gamma }(R))
\end{align}
where $ c_{ d,\gamma }$ does not further depend on $ f,n,\M,R,\xi,\varphi  .$  
 \end{theorem}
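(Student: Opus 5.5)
Everything will be read through Lemma \ref{lm:scaling}: for every $R$, $\textrm{Var}(\M(g_R)) = R^{2d}\int_{\Z}|\hat g(R\xi)|^2\S(d\xi)$. The proof has two steps. First, extract a small-ball bound on $\S$ from the assumption on $f$. Second, integrate $|\hat\varphi(R\xi)|^2$ against $\S$ by splitting into low and high frequencies.

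\textbf{Step 1: small-ball bound.} Since $f\geqslant 0$ and $f\neq 0$, we have $\hat f(0)=\int f>0$. By uniform continuity of $\hat f$ there is $\rho_f>0$ with $|\hat f(\eta)|^2\geqslant \frac12|\hat f(0)|^2$ for $\|\eta\|\leqslant\rho_f$. Fix $\varepsilon\in(0,1)$ and set $R=\rho_f/\varepsilon$. Lemma \ref{lm:scaling} gives
\[
\textrm{Var}(\M(f_R))\geqslant \tfrac12 R^{2d}|\hat f(0)|^2\,\S(B_\varepsilon).
\]
Combining with \eqref{eq:main-assum} yields
\[
\S(B_\varepsilon)\leqslant 2|\hat f(0)|^2 R^{-d}\zeta(R)=2|\hat f(0)|^2\rho_f^{-d}\varepsilon^d\zeta(\rho_f/\varepsilon).
\]
The factor $\rho_f^{-d}$ is absorbed by normalising $\rho_f\leqslant 1$ or into constants. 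This bound is valid only when $R\in[1,R_0]$, i.e.\ $\varepsilon\in[\rho_f/R_0,\rho_f]$.

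For large balls we also need a translation-boundedness estimate: $\S(B(\xi_0,r))\leqslant C(1+\zeta(1))r^d$ for $r\geqslant r_0$, uniformly in $\xi_0$. To get it, apply the same lower bound to the modulated function $e^{i\xi_0\cdot}f$, whose variance is controlled by $\textrm{Var}(\M(f))$ plus the $|f|$-statistic. Alternatively, cover $B(\xi_0,r)$ by unit cubes and use the $R=1$ case of the assumption together with weak stationarity. This uniform local bound on $\S$ is the step I expect to be most delicate.

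\textbf{Step 2: the main estimate.} Apply Lemma \ref{lm:scaling} to $\varphi$ at scale $\rho_f R$ and split $\Z$ into $\{\|\xi\|\leqslant \rho_f/R\}$ and its complement.

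\emph{Low frequencies.} Bound $|\hat\varphi|\leqslant\|\hat\varphi\|_\infty$ and use Step 1 with $\varepsilon=\rho_f/R$. This gives $\|\hat\varphi\|_\infty^2|\hat f(0)|^2R^d\zeta(R)$, up to powers of $\rho_f$.

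\emph{High frequencies.} Use $|\hat\varphi(\eta)|^2\leqslant C\|\eta\|^{-(d+\gamma)}$ and the layer-cake formula
\[
\int h\,d\S=\int_0^\infty \S(\{h>s\})\,ds, \qquad h(\xi)=\min\bigl(\|\hat\varphi\|_\infty^2,\ C(R\|\xi\|)^{-d-\gamma}\bigr).
\]
The superlevel sets are balls $B_{r(s)}$ with $s\asymp (Rr)^{-d-\gamma}$. After the substitution $t\propto s$, so that $r=t^{-1/(d+\gamma)}/R$, the range $\rho_f/R\leqslant r\leqslant \rho_f$ contributes through Step 1
\[
\int \varepsilon^d\zeta(\rho_f/\varepsilon)\,ds,
\]
which is exactly $R^{-2d}\cdot R^{d}\sigma_{\zeta,\gamma}(R)$. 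Indeed, the integrand $\zeta(Rt^{1/(d+\gamma)})t^{-d/(d+\gamma)}$ is what this change of variables produces, and the limits $R^{-d-\gamma}$ and $1$ correspond to $r=\rho_f$ and $r=\rho_f/R$.

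\emph{Tail.} The range $r>\rho_f$, i.e.\ $s\lesssim R^{-d-\gamma}$, is controlled by the polynomial growth bound from Step 1: $\S(B_r)\lesssim(1+\zeta(1))r^d$. This gives
\[
\int_{\|\xi\|>\rho_f}(R\|\xi\|)^{-d-\gamma}\S(d\xi)\lesssim (1+\zeta(1))R^{-d-\gamma},
\]
which is summable since $\gamma>0$. This is the $c_{d,\gamma}(1+\zeta(1))R^{-\gamma}$ term after multiplying by $R^{2d}$.

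\textbf{Finiteness and constants.} Finiteness of $\mathbf E[\M(\varphi_{\rho_fR})^2]$ follows from the finiteness of the above integral together with the mean bound $\mathbf E\M(|\varphi|)<\infty$. The spectral identity extends to bounded compactly supported $\varphi$ because $\M(\T)$ has a second moment. Finally, track the constants so that $\|\hat\varphi\|_\infty^2$ factors out; this uses that the $\F_\gamma$ decay constant can be compared to $\|\hat\varphi\|_\infty$ up to $c_{d,\gamma}$.
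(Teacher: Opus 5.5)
This is essentially the paper's proof. The small-ball bound from $|\hat f|^2\geqslant\frac12|\hat f(0)|^2$ near the origin is \eqref{eq:1}. Your modulation-plus-covering argument is exactly Lemma~\ref{lm:trans-bound}. The step you found delicate reduces to $|\M(e^{i\xi_0\cdot}f)|\leqslant\M(f)$, hence $\textrm{Var}(\M(e^{i\xi_0\cdot}f))\leqslant\mathbf E[\M(f)^2]=\textrm{Var}(\M(f))+(\int f)^2$. Your ``alternative'' via unit cubes and stationarity alone would not work, since spatial stationarity gives no control of spectral mass away from $0$. Your layer-cake split is also the paper's: radii in $[\rho_f/R,\rho_f]$ give $\sigma_{\zeta,\gamma}$, and radii above $\rho_f$ give the $R^{-\gamma}$ term. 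The paper only normalises $\rho_f$ so that the cut sits at radius $1$. Your remark that Step 1 holds only for $\varepsilon\in[\rho_f/R_0,\rho_f]$ is correct, and that range is all the second part uses.

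Two bookkeeping points. In Step 1 the factor $|\hat f(0)|^2$ cancels, so you get $\S(B_\varepsilon)\leqslant 2\rho_f^{-d}\varepsilon^d\zeta(\rho_f/\varepsilon)$.

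More importantly, your closing claim that the $\F_\gamma$ decay constant of $\varphi$ is controlled by $c_{d,\gamma}\|\hat\varphi\|_\infty$ is false. For $\varphi=1_{B_\theta}$ one has $\|\hat\varphi\|_\infty=\Leb(B_1)\theta^d$, but $\sup_\xi\|\xi\|^{(d+1)/2}|\hat\varphi(\xi)|\asymp\theta^{(d-1)/2}$. What your argument actually yields is \eqref{eq:main-result} with prefactor $\sup_\xi(1+\|\xi\|)^{d+\gamma}|\hat\varphi(\xi)|^2$ in place of $|\hat f(0)|^2\|\hat\varphi\|_\infty^2$. The same is true of the paper's proof, which simply normalises $|\hat\varphi(\xi)|\leqslant(1+\|\xi\|)^{-(d+\gamma)/2}$. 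The stated prefactor cannot hold uniformly in $\varphi$: for a Poisson process with $f=1_{B_1}$ and $\varphi=1_{B_\theta}$, the left-hand side is of order $\theta^dR^d$, while the right-hand side is of order $\theta^{2d}R^d$.
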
  
The behaviour  when $ \zeta (t) = t^{ -\alpha }$ for $ \alpha \geqslant 0$ is discussed in the introduction. 
 For $ \alpha <0,$ the bound is still true, but it concerns hyperfluctuating systems. For $ \alpha \leqslant -d$, the bound is typically useless.

\subsection{Proof of Theorem  \ref{thm:abstract-general}}

Some considerations related to scaling symplify the proof in terms of notations:
\begin{itemize}
\item Up to mulitplying by a constant, we assume in the proof without loss of generality  $     \hat f(0)   = 1.$
\item Similarly, we assume $   | \hat \varphi (\xi) | \leqslant (1 + \|\xi\|)^{ -\frac{ d + \gamma }{2}}$.
 \item The constant $ \rho _{ f}$ is related to the smallest $ r>0$ such that $  | \hat f |^{ 2} \geqslant  \frac{ 1}{ 2}   1_{ B_{ r}}$ (still assuming $ \hat f(0) = 1$). We assume that $ 1_{ B(0, 1\vee \sqrt{d}/2)}\leqslant 2  | \hat f | ^{ 2}$ up to rescaling $ f$.
 \end{itemize}

 An important preliminary  point is the translation boundedness of $ \S.$ Define the $ d$-dimensional  ball covering index
\begin{align*}
 \kappa _{ d} = \sup_{z\in \mathbb{R}^{ d}, T\geqslant 1}T^{ -d}\#\left\{U:B(z,T)\subset \bigcup\limits _{ \xi\in  U}B(\xi,\sqrt{d}/2)\right\}.
\end{align*}

 \begin{lemma}
 \label{lm:trans-bound}For $ T\geqslant 1$,
$ \S(B(\xi,T))\leqslant   2\kappa _{ d}  (1 + \zeta (1))  T^{ d}$.
 \end{lemma}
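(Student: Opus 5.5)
The plan is to test the spectral measure against a modulated copy of the smooth kernel and use the assumption \eqref{eq:main-assum} at the unit scale $R=1$. We work under the normalisations fixed above, namely $\hat f(0)=1$ and $1_{B(0,1\vee\sqrt d/2)}\leqslant 2|\hat f|^2$.

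\emph{Step 1: a translated window.} Fix $\eta\in\Z$ and set $g(x)=e^{i\eta\cdot x}f(x)$, so that $\hat g(\xi)=\hat f(\xi-\eta)$. By Lemma \ref{lm:scaling} with $R=1$, and because $\M(g)$ is again a periodised linear statistic with the same Fourier identity on $\Z$,
\begin{align*}
 \textrm{Var}\left(\M(g)\right)=\int_{\Z}|\hat f(\xi-\eta)|^2\S(d\xi)\geqslant \tfrac12\,\S\big(B(\eta,\sqrt d/2)\big).
\end{align*}

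\emph{Step 2: bounding the modulated statistic.} We need $\textrm{Var}(\M(g))$ bounded by a constant times $1+\zeta(1)$. Write $\M(g)=\M(f\cos(\eta\cdot))+i\M(f\sin(\eta\cdot))$. The real and imaginary parts are not of the form $f_R$, so \eqref{eq:main-assum} does not apply to them directly. Two routes are available.

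\begin{itemize}
\item Use only positivity: $|\M(g)|\leqslant \M(f)$, so $\mathbf E|\M(g)|^2\leqslant \mathbf E[\M(f)^2]=\textrm{Var}(\M(f))+\hat f(0)^2\leqslant \zeta(1)+1$. This uses $f\geqslant 0$, $\M\geqslant0$ and the intensity normalisation $\mathbf E\M(f)=\int f=\hat f(0)=1$. Then $\textrm{Var}(\M(g))\leqslant \mathbf E|\M(g)|^2\leqslant 1+\zeta(1)$.
\item This is exactly where positivity of $\M$ and $f$ enters, and it is the only subtle point. The variance of a complex statistic should be read as $\mathbf E|\cdot-\mathbf E\cdot|^2$, consistent with \eqref{eq:PS}.
\end{itemize}

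Combining Steps 1 and 2 gives, for every centre $\eta$ (and then for every real $\eta$, by the same argument),
\begin{align*}
 \S\big(B(\eta,\sqrt d/2)\big)\leqslant 2(1+\zeta(1)).
\end{align*}

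\emph{Step 3: covering.} For $T\geqslant1$ and any $\xi$, cover $B(\xi,T)$ by balls $B(\eta_j,\sqrt d/2)$. By the definition of $\kappa_d$ this needs at most $\kappa_dT^d$ balls, and the centres can be taken arbitrary in $\mathbb{R}^d$ since Step 2 holds for all real $\eta$. Summing the Step 2 bound over the cover yields
\begin{align*}
 \S(B(\xi,T))\leqslant 2\kappa_d(1+\zeta(1))T^d,
\end{align*}
which is the claim of Lemma \ref{lm:trans-bound}.

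The only potential obstacle is Step 2. Positivity settles it cleanly, so I expect no real difficulty beyond checking that the periodised extension preserves $|\M(g)|\leqslant\M(f)$; it does, since the periodisation is a sum of the pointwise inequality.
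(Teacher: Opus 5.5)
Your proof is correct and follows the paper's argument. You test $\S$ against $|\hat f(\cdot-\eta)|^2$ through the modulated statistic $\M(e^{i\langle\eta,\cdot\rangle}f)$, bound its variance by $\mathbf E[\M(f)^2]\leqslant 1+\zeta(1)$ using positivity of $f$ and $\M$, and sum over a cover of $B(\xi,T)$ by at most $\kappa_dT^d$ balls of radius $\sqrt d/2$. Your direct identity $\textrm{Var}(\M(e^{i\langle\eta,\cdot\rangle}f))=\int_{\Z}|\hat f(\xi-\eta)|^2\S(d\xi)$ is in fact cleaner than the paper's intermediate step, which writes this quantity as a covariance with $\M(f)$ before bounding it by the same variance.
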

 
 \begin{proof}
 Let $ \rho _{ d} = \sqrt{d}/2.$ Let  $ U =  \{\xi_{ i}^{ T};i\}\in \mathbb{R}^{ d}$ a set of  cardinality $ \leqslant \kappa _{ d}T^{ d}$ such that $ B(0,T)$ is covered by the $ B(\xi_{ i}^{ T},\rho _{ d})$. Then\begin{align*}
 |  \S(B_{ T}) |  \leqslant  |  \sum_{i}\S(B(\xi_{ i}),\rho _{ d}) | \leqslant \kappa _{ d}T^{ d}\sup_{ \xi\in  \mathbb{R}^{ d}} | \S(B(\xi,\rho _{ d})) | .
\end{align*} Recall that   $  1_{ B_{ \rho _{ d}}}\leqslant   {2}| \hat f | ^{ 2} $ on $ \mathbb{R}^{ d}.$  
By Lemma  \ref{lm:scaling} with $ R = 1$
  and Cauchy-Schwarz inequality and  \eqref{eq:PS}
\begin{align*}
  \frac{ 1}{ 2}    | \S(B(\xi,\rho _{ d} )) | \leqslant | \S( \tau _{\xi } |  \widehat{  f} | ^{ 2}) =   \textrm{Cov}\left( \M(e^{ i\langle \xi,\cdot  \rangle }  f), \M(f)\right)\leqslant  \sup_{\xi }  \textrm{Var}\left(\M(e^{ i \langle \xi,\cdot  \rangle }f)\right ).
\end{align*}
Then since $ \M$ and $ f$ are non-negative
\begin{align*}
 \textrm{Var}\left(\M(e^{ i\langle \xi,\cdot  \rangle}f)\right) \leqslant  \mathbf E\left[
  | \M(e^{ i\langle \xi,\cdot  \rangle}f) | ^{ 2}
\right]\leqslant  \mathbf E \M(f)^{ 2}\leqslant (\mathbf E \M(f))^{ 2} +   \textrm{Var}\left(\M(f)\right)
\end{align*}
 
and we assumed $  \textrm{Var}\left(\M(f)\right)\leqslant \zeta (1)  ,\mathbf E \M(f) = \int_{ }f = 1.$   This completes the proof.
 
 \end{proof}

 \begin{proof}[Proof of Theorem \ref{thm:abstract-general}]
  Recall that $ 1_{ B_{ 1}}\leqslant2 | \hat f | ^{ 2}.$
By Lemma  \ref{lm:spectral}
\begin{align}
\notag  \S(B_{ 1/R})  \leqslant  &2 \S(  | \hat f(R.) | ^{ 2})  \\
\label{eq:1} =& 2\S(R^{ -2d}  | \widehat{f_{ R}} | ^{ 2}) =   2R^{ -2d}\textrm{Var}\left(\M(f_{ R})\right)  )\leqslant 2R^{ -d}\zeta (R),
\end{align}
proving the first point.
  
Recall that $
 |  \hat \varphi (\xi) | \leqslant \min(1, \|\xi\|^{ -d-\gamma }). $
Then with $ \psi (\xi) = (R\|\xi\|)^{ -d-\gamma }$, by Lemma  \ref
 {lm:scaling},
\begin{align*}
   \textrm{Var}\left(\M(\varphi _{ R})\right) = & R^{ 2d}\S(  | \hat \varphi (R\cdot ) | ^{ 2}) \leqslant  R^{ 2d}\S(B_{ 1/R}) + R^{ 2d}\int_{  \Z  \setminus B_{ 1/R}} \psi (\xi)\S(d\xi ).
\end{align*}
The first term is bounded by $ 2R^{ d}\zeta (R)$ by  \eqref{eq:1}. For the second term, the layer cake formula yields
\begin{align*}
 \int_{ \Z  \setminus B_{ 1/R}}\psi (\xi)\S(d\xi ) = \int_{ 0}^{ 1} \S(\{\xi:\psi (\xi)>t\})dt = \int_{ 0}^{ 1}\S(B (0, t^{ -\frac{ 1}{d + \gamma }}/R))dt.
\end{align*}
We cut the  integral in two: first Lemma  \ref{lm:trans-bound} yields
\begin{align*}
\int_{ 0}^{ R^{ -(d + \gamma )}}\S(B (0, t^{ -\frac{ 1}{d + \gamma }}/R))dt\leqslant 2\kappa _{ d} (1 + \zeta (1)) \int_{ 0}^{ R^{- d - \gamma }} R^{ -d}t^{ -\frac{ d}{d + \gamma }}dt =  c_{ d,\gamma }(1 + \zeta (1))R^{ -d}(R^{ -(d + \gamma )})^{ 1-\frac{ d}{d + \gamma }}
\end{align*}
giving ultimately   $c_{ d,\gamma }(1 + \zeta (1) )R^{ d-\gamma }.$ For the second integral, the radius is $ t^{ -\frac{ 1}{d + \gamma }}/R\in [R^{ -1},1]$, hence with  \eqref{eq:1} it is bounded by
\begin{align*}
 2 \int_{ R^{ -(d + \gamma )}}^{ 1} (t^{ -\frac{ 1}{d + \gamma }}/R)^{ d}\zeta (Rt^{ \frac{ 1}{d + \gamma }}) = 2 R^{ - d} { \sigma_{ \zeta,\gamma } }(R),
\end{align*}
which concludes the proof.

 \end{proof}

 \subsection{Proof of Theorem  \ref{thm:non-asymp} (non-asymptotic)}  
 \label{prf:main-non-asymp}
  
  \begin{proof}

   For $ x_{ N}\in \Sigma ,$ the assumptions of the theorem are satisfied by the process $ \X_{ N}': = \tau _{ - x_{ N}}\X_{ N}$ on $ \Sigma ': = \tau _{  x_{ N}}\Sigma $ as well, hence up to working with $ \X_{ N}'$ instead of $ \X_{ N}$, we assume for notational simplicity without loss of generality  that $x_{ N} = 0$  and $ 0\in \Sigma ^{2 \varepsilon _{ N}N^{ 1/d}}$.

We will actually work  in a more general framework than in the theorem statement, because it is necessary to prove point (ii) and the asymptotic result of Theorem \ref{thm:main-asymp} later. \begin{itemize}
\item We assume that  the assumption  \eqref{eq:ass-asymp-stat} holds for $ R$ in the interval $ [1,R_{ N}]$  for $ R_{ N}$  larger than some $  R^\circ _{ f}\geqslant 1$ depending on $ f$, the construction of $ R^\circ $ is explicited in due course (hence we do not have necessarily a range of the form $ [1,N^{ \maxscale }], R_{ N}$ is not even supposed to go to $ \infty $). 
\item  { Let $\gamma >0, \alpha _{ 0}>\min(\alpha ,\gamma )$ such that $ \alpha _{ 0}\neq \gamma $.} We also consider more generally any $ \gamma>0$ such that $ \maxscale ,\delta $  satisfy $ R_{ N}^{ d + 1 + \alpha_{ 0}} N^{ \delta }= o(N^{ 1/d})$. In view of proving Theorem \ref{thm:non-asymp}, take $R_{ N} = N^{ \maxscale }$  with $ \maxscale(d + 2) + \delta <1/d $ and $ \gamma \leqslant 1.$ To prove Theorem \ref{thm:main-asymp} we will take $ R_{ N} = R \geqslant  R_{ f}^{ \circ}$ is constant.  In this more general framework, the {\it local factor} is set to satisfy $$  \varepsilon _{ N}\geqslant     R_{ N}^{ d + 1 + \alpha _{ 0}}N^{ -1/d}$$under the condition $ \varepsilon _{ N}\to 0.$   
\end{itemize}

 We will prove that the result  \eqref{eq:result-main-non-asymp} holds  on the scale $ [1,R_{ N}]$ with such a local factor $ \varepsilon _{ N}. $ Theorem \ref{thm:non-asymp} is then obtained with the specific choice $ R_{ N} = N^{ \maxscale },\varepsilon _{ N} = N^{ -\delta }$.\\


The goal is to
 apply the periodic transfer principle of Theorem \ref{thm:abstract-general} to a stationarised version of a restriction of $ \X_{ N}$ onto $ \T$ with 
\begin{align}
 \label{eq:lambda}\lambda : = \varepsilon _{ N}N^{ 1/d}\geqslant R_{ N}^{ d + 1  + \alpha _{ 0}}.
\end{align}
Let $ U_{\lambda }$ an independent variable uniform in $ \T,$ and
\begin{align*}
  \P _{\lambda} := \tau ^{\lambda }_{ U_{\lambda }}(   \X _{ N}\cap \T),
\end{align*} see Figure  \ref{fig}. By assumption (on $ x_{ N}$),  $ \T\subset \Sigma ^{ 2R_{ N}}.$
  By construction, $  \P _{\lambda}$ is  invariant in law under periodic translations of $ \T$.
\begin{figure}[h!] 
\begin{tikzpicture}[scale=1]

\draw[thick]
  (0,0) .. controls (2,2.5) and (8,2.5) .. (9,0)
        .. controls (8,-2) and (8,-3) .. (6,-2.5)
        .. controls (1.5,-2) and (-1,-1.5) .. (0,0);
\node at (2.5,1) {$ \Sigma$};

\def\xOx{5.2}
\def\xOy{-.5}
\def\l{1.1}
\coordinate (x0) at (\xOx,\xOy);
\draw[thick] (\xOx-\l,\xOy-\l) rectangle (\xOx + \l,\xOy + \l);
\node[right] at (\xOx-.5 ,\xOy+ \l + .2) {$ \mathbb{T}_\lambda $};

\foreach \px/\py in {
    {\xOx - 0.7*\l}/{\xOy + 0.3*\l},
    {\xOx + 0.5*\l}/{\xOy + 0.6*\l},
    {\xOx - 0.2*\l}/{\xOy - 0.5*\l},
    {\xOx + 0.7*\l}/{\xOy - 0.2*\l},
    {\xOx + 0.1*\l}/{\xOy + 0.4*\l},
    {\xOx - 0.5*\l}/{\xOy - 0.7*\l},
    {\xOx + 0.4*\l}/{\xOy - 0.6*\l},
    {\xOx - 0.3*\l}/{\xOy + 0.7*\l},
    {\xOx + 0.6*\l}/{\xOy + 0.2*\l},
    {\xOx - 0.6*\l}/{\xOy - 0.1*\l}
}{
    \filldraw[gray] (\px,\py) circle (1.5pt);
}

\coordinate (Pn) at ({\xOx + 0.7*\l}, {\xOy - 0.2*\l});
\node[gray] (PnLabel) at ({\xOx + 2.5*\l}, {\xOy - 0.8*\l}) {$ \P _{\lambda}$};
\draw[->, gray, shorten >=2pt] (Pn) -- (PnLabel);

\fill (x0) circle (2pt);
\node[below right] at (x0) {  \hspace{-.8cm}   $x_0 = 0$};

\def\Unx{0}
\def\Uny{.45}
\coordinate (Un) at (\xOx + \Unx,\xOy + \Uny);
\fill (Un) circle (2pt);
\node[right] at (Un) {$U_\lambda $};


\draw[thick] (Un) circle (0.3);
\node at (\xOx + \Unx + 3,\xOy + \Uny) {$\tau _{   U_{\lambda }}f_R$};

\draw[-, thick] (\xOx + \Unx ,\xOy + \Uny + .3) to[bend left=40] (\xOx + \Unx + 3,\xOy + \Uny + .1);

\end{tikzpicture}
\caption{ }
\label{fig}
\end{figure}
To apply Theorem \ref{thm:abstract-general}, we must evaluate the variance over $ f_{ R}$ and then over $ \varphi _{ R}$.   We must properly link $  \textrm{Var}\left(\P_{\lambda }(g_{ R})\right)$ to the variances $  \textrm{Var}\left(\X_{ N}(\tau _{ x}g_{ R})\right)$ for functions $ g\in \B_{ c}^{}(B_{ 1})$ regular or irregular. From now on $ C_{ f}'$ is a constant changing from line to line not depending on $ N,R,$ where $ R$ is an element of $ [1,R_{ N}]$.

\begin{lemma}
\label{lm:fluct-var}
For $ g \in \B_{ c}^{ }(B_{ 1})$,
\begin{align*}
 \overline{  \textrm{Var}}^{ \varepsilon _{ N}}\left(\X_{ N}(g_{ R})\right)\leqslant  \textrm{Var}\left( \P _{\lambda}(g_{ R})\right)
 +  C_{ f}'\|g\|_\infty ^{ 2}  R^{ d-\alpha _{ 0}}.
 \end{align*}
 In the case $ g = f$,  we have furthermore the converse
\begin{align*}
 \textrm{Var}\left(\P_{\lambda }(f_{ R})\right)\leqslant \overline{  \textrm{Var}}^{ \varepsilon _{ N}}\left(\X_{ N}(f_{ R})\right) + C_{ f}'  {  R^{ d-\alpha _{ 0}}}.
\end{align*}

\end{lemma}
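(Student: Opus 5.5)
The plan is to compare $\P_\lambda(g_R)$ with $\X_N(\tau_x g_R)$ by conditioning on the uniform shift $U_\lambda$. Then I split $\T$ into an interior region and a thin boundary strip. On the interior the periodic wrapping is invisible. On the strip I use a crude second-moment bound, and the choice \eqref{eq:lambda} of $\lambda$ makes its contribution negligible.

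\textbf{Step 1: conditional identification.} Fix $u\in\T$ and set $x=-u$ modulo $\lambda$. Since $g_R$ is supported in $B_R$, on the event $U_\lambda=u$ we have $\P_\lambda(g_R)=\X_N(\tau_x g_R)$ whenever $B(x,R)\subset\T$. Indeed, the periodisation then involves only one copy of $g_R$, and it only sees points of $\X_N\cap\T$. Let $D_R$ be the set of $x$ at toroidal distance $<R$ from $\partial\T$. It has volume at most $c_dR\lambda^{d-1}$, so its normalised measure is $O(R/\lambda)$.

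\textbf{Step 2: crude bound.} For every $x$ with $B(x,2R)\subset\Sigma$ and every $h\in\B_c(B_1)$, I want
\[
\mathbf E[\X_N(\tau_x h_R)^2]\leqslant C_f'\|h\|_\infty^2R^{2d}.
\]
Since $f$ is non-degenerate, $f\geqslant c$ on some small ball $B(y_0,r)$. Cover $B(x,R)$ by $O(R^d)$ translates of this ball, so that $1_{B(x,R)}\leqslant c^{-1}\sum_i\tau_{z_i}f$, where $f=f_1$ is the kernel at scale $1$. The assumption \eqref{eq:ass-asymp-stat} at scale $R=1$ gives $\mathbf E[\X_N(\tau_{z_i}f)^2]\leqslant (\Leb(f))^2+C_f$. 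Cauchy--Schwarz on the sum then yields the $R^{2d}$ bound. The same bound holds for $\P_\lambda(h_R)$ conditionally on $U_\lambda=u$ when $-u\in D_R$: the wrapped kernel is a sum of at most $2^d$ pieces, each supported on a ball of radius $R$ inside $\Sigma$, because $\T\subset\Sigma^{2R_N}$.

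\textbf{Step 3: first inequality.} By the law of total variance,
\[
\textrm{Var}(\P_\lambda(g_R))\geqslant\mathbf E_U\big[\textrm{Var}(\P_\lambda(g_R)\mid U_\lambda)\big].
\]
Restricting the average to $-u\notin D_R$, this is at least $\lambda^{-d}\int_{\T\setminus D_R}\textrm{Var}(\X_N(\tau_xg_R))\,dx$. The remaining part of $\overline{\textrm{Var}}^{\varepsilon_N}$ is an integral over $D_R$. By Step 2 it is at most $C_f'\|g\|_\infty^2R^{2d}\cdot R/\lambda$. By \eqref{eq:lambda}, $\lambda\geqslant R_N^{d+1+\alpha_0}\geqslant R^{d+1+\alpha_0}$, so this is at most $C_f'\|g\|_\infty^2R^{d-\alpha_0}$.

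\textbf{Step 4: converse for $g=f$.} Again write
\[
\textrm{Var}(\P_\lambda(f_R))=\mathbf E_U[\textrm{Var}(\cdot\mid U_\lambda)]+\textrm{Var}_U(\mathbf E[\cdot\mid U_\lambda]).
\]
The first term is at most the local average $\overline{\textrm{Var}}^{\varepsilon_N}$ plus the strip contribution, which is bounded exactly as in Step 3. The second term is the main obstacle, because the conditional mean $m(x)=\mathbf E[\X_N(\tau_xf_R)]$ genuinely varies with $x$. I would bound it by
\[
\textrm{Var}_U(m)\leqslant\lambda^{-d}\int_{\T}(m(x)-\Leb(f_R))^2\,dx .
\]
On the interior, Jensen and \eqref{eq:ass-asymp-stat} give $(m(x)-\Leb(f_R))^2\leqslant\mathbf E[\overline{\X_N}(\tau_xf_R)^2]\leqslant C_fR^{d-\alpha}$. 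On the strip I use Step 2 again, which gives $C_f'R^{d-\alpha_0}$.

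This produces a term of order $R^{d-\alpha}$ rather than $R^{d-\alpha_0}$, which is harmless provided $R^{d-\alpha}\lesssim R^{d-\alpha_0}$. I expect to arrange this by taking $\alpha_0\leqslant\alpha$ when $\alpha\leqslant\gamma$, since the downstream target is only $V_{\alpha,\gamma}$. If that is not enough, I would state the converse with $R^{d-\min(\alpha,\alpha_0)}$. In any case the converse is only fed into Theorem \ref{thm:abstract-general} through a bound of the form $R^d\zeta(R)$ with $\zeta(R)\asymp R^{-\alpha}$, so an $O(R^{d-\alpha})$ error suffices there.
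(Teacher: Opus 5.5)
Your proposal is correct and follows the paper's proof essentially step by step. The shared steps are: the law of total variance in $U_\lambda$; the split of $\T$ into an interior and a boundary strip of relative volume $O(R/\lambda)$, controlled by the crude $R^{2d}$ second-moment bound of Lemma~\ref{lm:brute-square}; absorption of $R^{2d+1}/\lambda$ via \eqref{eq:lambda}; and Jensen on the conditional mean for the converse. You derive the crude bound from the scale-$1$ assumption with $O(R^d)$ pieces rather than the scale-$R$ assumption with $O(1)$ pieces, which is an immaterial difference. Your caveat about the converse is well founded: it only yields an error $R^{d-\alpha}+R^{d-\alpha_0}$, and the paper's final step $R^{d-\alpha}\leqslant R^{d-\alpha_0}$ tacitly needs $\alpha_0\leqslant\alpha$, which fails under its standing choice $\alpha_0>\min(\alpha,\gamma)$ when $\alpha\leqslant\gamma$ (so your first suggested fix conflicts with that convention). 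Your fallback $R^{d-\min(\alpha,\alpha_0)}$ is exactly what the paper actually uses in \eqref{eq:ggg} through $\alpha_1=\min(\alpha,\alpha_0)$.
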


\begin{proof}

The main issue, source of the reduced scales range in the theorem statement, lies in dealing with  $ U_{\lambda }$ being close to the boundary of $ \T$.  Let $ \partial_{\lambda,R} $ the set of $ x\in  \T$ where $ B(x,R)$ intersects $ \partial  \T$. Let $ \mathring{\T} := \T	 \setminus \partial_{\lambda,R}.$ 
 When $ U_{\lambda }\in \partial_{\lambda,R}$,    $ \tau _{ U_{\lambda }}^{\lambda }  g_{ R}$ is possibly cut in several discontinuous pieces supported by $  \T$ (at most $ 2^{ d}$ pieces). Each such piece is supported by  a ball $ B(y,R)$ contained in $  \Sigma ^R.$ 
 \begin{lemma}
 \label{lm:brute-square}For any $ g\in \B_{ c}^{}(B_{ 1}),$
\begin{align*}
 \sup_{ y \in \T} \mathbf E\left[
  \X _{ N}(  \tau _{ y}^{\lambda }g_{ R})^{ 2}
\right]\leqslant C_{ f}'\|g\|_\infty ^{ 2}R^{ 2d}\\
 \sup_{ y :B(y,R)\subset \Sigma ^{ R}} \mathbf E\left[
  \X _{ N}(  \tau _{ y}g_{ R})^{ 2}
\right]\leqslant C_{ f}'\|g\|_\infty ^{ 2}R^{ 2d}.
\end{align*}
 \end{lemma}

 \begin{proof}
 Since $ f$ is supported by $ B_{ 1}$ and is non-degenerate, there is $ x_{ 0}\in B_{ 1}$ 
 and some constants $ \rho _{ f}\in (0,1],c_{ f}>0$, such that $ f\geqslant c_{ f}1_{ B(x_{ 0},\rho _{ f})}$, and $$\|g\|_\infty  \tau _{ y-x_{ 0}}f_{ R}\geqslant \|g\|_\infty c_{ f}1_{ B(y-x_{ 0},\rho _{ f}R)}\geqslant c_{ f} \tau _{ y - x_{ 0}}g_{ \rho _{ f}R}.$$ Therefore, covering each ball $ B(y,R)$ by smaller balls $ B(y_{ i},\rho _{ f}R)$ (the minimal number of balls required depends on $ d$ and $ \rho _{ f}$)  
\begin{align*}
\sup_{ y \in \T[\lambda ] } \mathbf E \left[
 |  \X _{N}(\tau _{ y}^{\lambda }g_{ R}) | ^{ 2}
\right]\leqslant & 2^{ d}\|g\|_\infty ^{ 2}\sup_{ y:B(y,R)\subset \Sigma ^{ R}}\mathbf E \left[
 \X_{ N}(B(y,R))^{ 2}
\right]\\
\leqslant & C_{ f}'\|g\|_\infty ^{ 2}\sup_{ y:B(y,\rho _{ f}R)\subset  \Sigma   } \mathbf E\left[
   \X _{N}(B(y,\rho _{ f}R))   ^{ 2}
\right]\\
\leqslant &C_{ f}'\|g\|_\infty ^{ 2}\sup_{ y\in \Sigma ^{ R} } \mathbf E \left[
  \X_{ N}(\tau _{ y}f_{ R})   ^{ 2}
\right]\\
\leqslant&C_{ f}'\|g\|_\infty ^{ 2} \left(\sup_{ y\in \Sigma ^{ R}}
\mathbf E\left[\;
 \overline{\X_{ N}}(\tau _{ y}f_{ R}) ^{ 2}
\right] + \Leb(f_{ R})^{ 2}
\right).
\end{align*}
The first term is bounded by $ C_{ f}R^{ d-\alpha }$ by assumption \eqref{eq:ass-asymp-stat}.
The second term is   $ R^{ 2d}\Leb(f)^{ 2}$. It proves the first inequality. The second inequality is proved similarly, starting with 
\begin{align*}
 \sup_{ y:B(y,R)\subset \Sigma ^{ R}}\mathbf E \left[
\X_{ N}(\tau _{ y}g_{ R})^{ 2}
\right]\leqslant \|g\|_\infty ^{ 2}\sup_{ y:B(y,R)\subset \Sigma ^{ R}}\mathbf E \left[
\X_{ N}(B(y,R))^{ 2}
\right].
\end{align*}
 \end{proof}

On the other hand, for interior points, i.e. for $ x\in \mathring{\T}$, $ \tau _{ x}^{\lambda }g_{ R} = \tau _{ x}g_{ R}$,  $ B(x    ,R)\subset   \T \subset    \Sigma ^{ R}$, and \eqref{eq:ass-asymp-stat} can be applied directly.
   
We are ready to prove   Lemma  \ref{lm:fluct-var}.
We start  from the conditional variance formula \begin{align}
\label{eq:cvf} 
\textrm{Var}\left(\P_{\lambda }(g_{ R})\right)-\int_{ \T[\lambda]}  \textrm{Var}\left(\X_{ N}(\tau_{ x}^{\lambda }g_{ R})\right)\frac{ dx}{\lambda^d}
  =  \textrm{Var}\left(\mathbf E\left[
 \X_{ N}(\tau ^{\lambda }_{ U_{\lambda }}g_{ R})\,|\,U_{\lambda }
\right]\right).
\end{align}
This gives
\begin{align}
\label{eq:xxxx}
\int_{ \T[\lambda]}  \textrm{Var}\left(\X_{ N}(\tau_{ x}^{\lambda }g_{ R})\right)\frac{ dx}{\lambda^d}
\leqslant \textrm{Var}\left(\P_{\lambda }(g_{ R})\right) 
\end{align}
which almost proves the first inequality of the lemma.
 Let us deal with boundary terms. By \eqref{eq:ass-asymp-stat} and Lemma  \ref{lm:brute-square}
\begin{align*}
\left|
 \int_{ \T[\lambda]}  \textrm{Var}\left(\X_{ N}(\tau _{ x}^{\lambda }g_{ R})\right) \frac{ dx}{\lambda^d}-
\overline{\textrm{Var}}^{\varepsilon _{N}}\left(\X_{ N}( g_{ R})\right)
\right| 
= &\lambda ^{ -d} \left|
\int_{ \partial_{\lambda ,R}}  \textrm{Var}\left(\X_{ N}(\tau _{ x}^{\lambda }g_{ R})\right)dx-\int_{ \partial _{\lambda ,R}}  \textrm{Var}\left(\X_{ N}(\tau _{ x}g_{ R})\right)dx
\right|\\
\leqslant & \lambda ^{ -d}\int_{ \partial _{\lambda ,R}}\left(
 \mathbf E \left[
\X_{ N}(\tau _{ x}^{\lambda }g_{ R})^{ 2}
\right] + \mathbf E \left[
\X_{ N}(\tau _{ x}g_{ R})^{ 2} 
\right]
\right){ dx} \\
\leqslant &\lambda ^{ -d}\int_{ \partial _{\lambda ,R}}2\|g\|_\infty ^{ 2}C_{ f}'R^{ 2d}dx\\
\leqslant&C_{ f}'\frac{  \Leb(\partial _{\lambda ,R})}{\lambda^d}\|g\|_\infty ^{ 2}R^{ 2d} \leqslant C_{ f}'\|g\|_\infty ^{ 2}\frac{ R^{ 2d + 1}}{\lambda }\end{align*}
using  $ \Leb( \partial _{\lambda ,R}) = O(\lambda^{d-1}R)$. By   \eqref{eq:lambda}
\begin{align}
\label{eq:bd-scale}
 \frac{ R^{ 2d + 1}}{\lambda } \leqslant R^{ d-\alpha _{ 0}}
\end{align}
 which proves the first inequality with   \eqref{eq:xxxx}.
 
 {Let us now prove the second inequality of the lema by treating the RHS of  \eqref{eq:cvf}. For each $ x \in \mathring\T,$ in the smooth case $ g = f,$ Assumption \eqref{eq:ass-asymp-stat} gives for $ x\in \mathring\T$, with Cauchy-Schwarz inequality
\begin{align*}
\left|
 \mathbf E\left[
 \X_{ N}(\tau _{ x}f_{ R})
\right]-\int_{ }f_{ R}
\right|\leqslant & \sqrt{\mathbf E \left[\left(
\X_{ N}(\tau _{ x}f_{ R})-\int_{ }f_{ R}
\right)^{ 2}
\right]}\leqslant C_{ f}'R^{ \frac{ d-\alpha }{2}}.
\end{align*}
Therefore with Lemma  \ref{lm:brute-square} applied to $ f$, Cauchy-Schwarz inequality yields
\begin{align*}
\left|
\mathbf E \left[
\X_{ N}(\tau _{ U_{\lambda }}^{\lambda }f_{ R})
\,|\,U_{\lambda }\right]-\int_{ }f_{ R}
\right|\leqslant & {\bf 1}\{ U_{ \lambda }\in \partial _{ \lambda ,R} \} \sup_{ x\in \partial _{\lambda ,R}}\left(
\mathbf E \left[
| \X_{ N}(\tau _{ x}^{\lambda }f_{ R} )| 
\right]  + \int_{ }f_{ R}
\right)+ C_{ f}' R^{ \frac{ d-\alpha }{2}}\\
\leqslant & {\bf 1}\{ U_{ \lambda }\in \partial _{ \lambda ,R} \}  \left(C_{ f}'\|f\|_\infty 
R^{ d} +  R^{ d}\Leb(f)
\right) + C_{ f}'R^{ \frac{ d-\alpha }{2}}\\
\textrm{Var}\left(\mathbf E \left[
\X_{ N}(\tau ^{ \lambda }_{ U_{ \lambda }}f_{ R})\,|\,U_{\lambda }
\right]\right)\leqslant  &
\mathbf E \left(\mathbf E \left[
\X_{ N}(\tau _{ U_{\lambda }}^{\lambda }f_{ R})-\int_{ }f_{ R}\,\Big|\,U_{\lambda }
\right]^{ 2}\right)\\
\leqslant & \mathbf P (U_{ \lambda }\in \partial _{ \lambda ,R})C_{ f}'R^{ 2d} + C_{ f}'R^{ d-\alpha }\\
\leqslant&  C_{ f}'\left(
\frac{ R^{ 2d + 1}}{\lambda } + R^{ d-\alpha }
\right)
\leqslant C_{ f}'  R^{ d-\alpha _{ 0}}.
\end{align*}}

\end{proof}  

Let $ \alpha _{ 1} = \min(\alpha ,\alpha _{ 0}).$
We therefore have with Lemma  \ref{lm:fluct-var} for $ g = f$, with Assumption \eqref{eq:ass-asymp-stat},
\begin{align}
\label{eq:ggg}
  \textrm{Var}\left(\P_{ \lambda }(f_{ R})\right)\leqslant C_{ f}'(R^{ d-\alpha } + R^{ d-\alpha _{ 0}})\leqslant C_{ f}'R^{ d-\alpha _{ 1}}.
\end{align}

  We need another ingredient to apply Theorem \ref{thm:abstract-general}: prove that $ \mathbf E\left[
  \P _{\lambda }(\T) 
\right]= \mathbf E\left[
 \X_{ N}(\T)
\right]  \asymp   \lambda ^{ d}$. We use a similar strategy than in the lemma above.
 Let  $a: = \displaystyle\int_{}f, h = a f$ so that $ \int_{ }h = 1.$ Introduce the averaging operator
\begin{align*}
 \overline{   h_{ R}}(x) = \lambda ^{ -d}\displaystyle\int_{\T}h_{ R}(x + y)dy.
\end{align*} 
For $ y\in \mathring{\T}$, since $ h_{ R}$ has support in $ B(0,R)$
\begin{align*}
\overline{ h_{ R}}(x) =\lambda ^{ -d} \displaystyle\int_{\T}h_{ R}(x + y)dy = \lambda ^{ -d}\displaystyle\int_{B(x,R)}h_{ R}(x + y)dy = \lambda ^{ -d}R^{ d}\displaystyle\int_{}h = \lambda ^{ -d}R^{ d}
\end{align*}
hence
\begin{align*}
 \mathbf E \left[
\X_{ N}(\overline{h_{ R}})
\right] = \mathbf E \left[
\displaystyle\int_{\partial _{ \lambda ,R}} \overline{h_{ R}}(x)\X_{ N}(dx)
\right] + \mathbf E \left[
\displaystyle\int_{\mathring{\T}}\lambda ^{ -d}R^{ d}\X_{ N}(dx)
\right].
\end{align*}
Let us apply this with $ R = R_{ N}.$
We have  with Cauchy-Schwarz inequality  
\begin{align*}
 \left|
\mathbf E\left[
 \P_{\lambda }(\T)
\right]-\lambda^{d}
\right| =& \left|
\mathbf E \left[
\X_{N}(\T)
\right]-\lambda^{d}
\right|\leqslant  \left|
\mathbf E \left[
\X_{N}(\T)
\right]-\frac{\lambda ^{ d}}{R_N^{ d}}\mathbf E\left[
 \X_{N}(  \overline{ h_{ R_N}} )
\right]
\right| + \left|\frac{\lambda ^{ d}}{R_N^{ d}} \mathbf E\left[\;
 \X_{N}( \overline{h_{ R_N}} )
\right]-\lambda^{d}
\right|\\
\leqslant &  \mathbf E \left[
\int_{ \partial _{\lambda ,R_N}}\X_{N}(dx)
\right] + \mathbf E \left[
\int_{ \partial _{\lambda ,R_N}}\|h\|_\infty \X_{ N}(dx)
\right]+\frac{ 1}{R_N^{ d}} \int_{ \T}\mathbf E \left|
\X_{N}(\tau _{ x}h_{ R_N})-\int_{ }h_{ R_N}
\right| dx \\
\leqslant &C_{ f}'(\|h\|_\infty  + 1)\mathbf E\left[
 \int_{ \partial _{\lambda ,R_N}}  \X_{ N}(B(x,1))dx
\right]
 +\frac{ 1}{R_N^{ d}}\int_{ \T}\sqrt{\mathbf E \left[\;\overline{\X_{ N}}(\tau _{ x}h_{ R_N})^{ 2}
\right]}dx\\
 \leqslant & C_{ f}' \Leb(\partial _{\lambda ,R_N})\sup_{ x}\sqrt{\mathbf E \left[
\X_{ N}(B(x,1))^{ 2}
\right]}+  \frac{ 1}{R_N^{ d}}\int_{ \T} {C_{ f}'R_N^{ \frac{ d-\alpha}{2} }}dx \text{\rm{ using \eqref{eq:ass-asymp-stat}}} \\
 \leqslant  & \left(
C'_{ f}
\frac{ R_N}{\lambda } + C_{ f}'R_N^{ -\frac{ d + \alpha }{2}}
\right).
\end{align*}
  There exists $ R^{\circ }\geqslant 1$ depending on $ f$ such that for $ R_N>R^{\circ }$, the latter quantity is bounded by $\lambda ^{ d}/2$; this is how we fix $ R^{\circ }$ throughout the proof.
There is therefore $ a_{\lambda }\geqslant  \frac{ 1}{ 2}   $ such that $  \tilde  \P _{\lambda } :=a_{\lambda }  \P _{\lambda}$ satisfies $ \mathbf E \tilde  \P _{\lambda }(\T) =\lambda ^{ d}$.  We have with  \eqref{eq:ggg}
\begin{align*}
  \textrm{Var}\left( \tilde \P_{ \lambda }(f_{ R})\right) \leqslant C_{ f}'R^{ d-\alpha _{ 1}}.
\end{align*}

 Let $ \zeta(R): = c R^{ -\alpha _{ 1} }$. We have with  \eqref{eq:power-sigma-xi}   
\begin{align*}
 \sigma _{ \zeta,\gamma }(R) = R^{ -\alpha _{ 1}}\ln(R)^{ {\bf 1}\{ \alpha _{ 1} = \gamma  \}}  =  R^{ -\min(\alpha ,\gamma )}\ln(R)^{ {\bf 1}\{ \alpha  = \gamma  \}} = R^{ -d}V_{ \alpha ,\gamma }(R).\end{align*}
 
Theorem  \ref{thm:abstract-general} applied to $ \tilde  \P _{\lambda }$ therefore gives the overall bound, for $ R\in [1,R_{ N}]$
\begin{align*}
  \textrm{Var}\left( \P _{\lambda}(\varphi _{ \rho _{ f}R})\right)\leqslant  4\textrm{Var}\left( \tilde  \P _{\lambda }(\varphi _{ \rho _{ f}R})\right) \leqslant C_{ \varphi }C_{ f}'R^{ d}(R^{ -\alpha } + R^{ -\gamma }  +  \sigma _{ \zeta   ,\gamma } (R))\leqslant C_{ \varphi }C'_{ f}V_{ \alpha ,\gamma }(R).
\end{align*}
Applying back Lemma  \ref{lm:fluct-var}, we can conclude the proof:
\begin{align}
\label{eq:final-main}
\overline{\textrm{Var}}^{\varepsilon _{N}}\left(\X_{ N}( \varphi _{\rho _{ f} R})\right)
 \leqslant C_{ \varphi } C_{ f}' V_{ \alpha ,\gamma }(R).
\end{align}

%
%
%
%

%

 {\bf Converse direction:}   The reverse inequality is in fact a contrapositive result, with roles exchanged. 
The assumption  on $ f$ yields that $ f\in \F_{ \gamma }$ for some $ \gamma >1$,  i.e. $  | \hat f(\xi) |  = O(\|\xi\|^{ -\frac{ d + \gamma }{2}})$. 
More precisely we apply the proof of (i) above with $ ``\varphi  = f, f = 1_{ B_{ 1}}$'' and $ \gamma  = 1$, on the interval $ [1,R_{ N}]$, $ \varepsilon _{ N} = N^{ -\delta }$.

Reason by contradiction if the LHS in  \eqref{eq:liminf-main} is bounded by some $ C<\infty $ whatever is the choice $ R_{ N}',$ for some subsequence $ N_{ j}\to \infty .$   It means that 
 for all  $ j,R\in [1,R_{ N_{ j}}]$
\begin{align*}
  \textrm{Var}\left(\X_{N_{ j}}(B_{ R})\right)\leqslant C V_{ \alpha ,1}(R).
\end{align*}
Note that $ V_{ \alpha ,1}(R) = R^{ d}\zeta (R)$ with  either $ \zeta (R) = R^{ -\min(\alpha,1)}$ if $ \alpha  \neq 1$ or   $ \zeta (R) = R^{ -1}\ln(R)$ if $ \alpha  = 1$. 
The function $ 1_{ B(0,1)}$ is non-negative and non-degenerate, we can therefore reproduce the proof above in this particular context, with the same notation $ \P_{\lambda }.$

Point (i) implies for $  \varepsilon _{ N} = N^{ -\delta },$ for all $ R\in [1,R_{ N}]$
\begin{align*}
  \overline{\textrm{Var}}^{ \varepsilon _{ N_{ j}}}\left(\X_{N_{ j}}(f_{ R })\right)\leqslant C'R^{ d}(R^{ -\gamma } + \zeta (R) +  \sigma _{ \zeta,\gamma }(R)) 
\end{align*}
for some finite $ C'$ not depending on $ N $.\begin{itemize}
\item  \underline {First case:  $ \zeta (R) = R^{ -\min(\alpha,1) }$ for some $ \alpha\neq 1$.}  
 Then  , since $ \gamma >1$,
$$
\sigma _{ \zeta,\gamma }(R) \leqslant  \int_{ R^{ -(d + \gamma )}}^{ 1}(Rt^{ \frac{ 1}{d + \gamma }})^{ -\min(\alpha ,1)}t^{ -\frac{ d}{d + \gamma }}dt = O(V_{ \alpha ,1 }(R))
$$ and we have for $ R\in [1,R_{ N_{ j}}],$ in particular for $ R = R_{ N_{ j}}$,   $  \overline{\textrm{Var}}^{ \varepsilon _{ N_{ j}}}\left(\X_{ N_{ j}}(f_{ R })\right) = O(V_{ \alpha ,R})$, reaching a contradiction with	\eqref{eq:limsup}.
 \item \underline {Second case: $ \alpha  = 1$},  \begin{align*}
 \sigma _{ \zeta   ,\gamma  }(R) = \int_{ R^{ -(d + 1 )}}^{ 1}(Rt^{ \frac{ 1}{d + \gamma }})^{ -1}\ln(Rt^{ \frac{ 1}{d + \gamma }})t^{ -\frac{ d}{d + \gamma }}dt = O(R^{ -1}\ln(R)).
\end{align*}
It therefore yields $  \textrm{Var}\left(\X_{ N }(f_{ R'_{N }})\right) = O((R'_{N })^{d -1}\ln(R'_{N })) = O(V_{ \alpha ,1}(R'_{ N}))$, again a contradiction.

\end{itemize}

  \end{proof}
  
   \subsection{Proof of  Theorem  \ref{thm:main-asymp}}  
    \label{sec:prf-asymp}
 This proof is a continuation of the previous proof.
First remark that $ \X_{ N}' = \tau _{  x_{ N}}\X_{ N}$ satisfies the  assumptions of the theorem as well with $ \Sigma ': = \tau _{ x_{ N}} \Sigma $, hence without loss of generality we only write the proof for $ x_{ N} = 0$.  

 For notational simplification, assume up to working directly on a converging  subsequence that $ \X_{ N}\to \P$.
 We use this lemma, proved later.
 \begin{lemma}
 \label{lm:P-continuous}
 $ \varphi $ is $ \P$-continuous, i.e. a.s. $ \varphi $ is continuous at every $ x\in \P.$
 \end{lemma}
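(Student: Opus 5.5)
The plan is to reduce the statement to a first-moment estimate and then pass to the limit. Let $D\subset\mathbb R^d$ be the discontinuity set of $\varphi$. By hypothesis $\Leb(D)=0$, and $D$ is Borel, in fact an $F_\sigma$. Since $\P$ is a.s. locally finite, the claim ``a.s. $\varphi$ is continuous at every point of $\P$'' is equivalent to $\mathbf P(\P(D)>0)=0$. By Markov's inequality it suffices to show $\mathbf E[\P(D\cap K)]=0$ for every compact $K$, where $\mathbf E\P$ denotes the intensity measure of $\P$. So everything reduces to proving that this intensity measure does not charge $D$.

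First, I approximate $D\cap K$ from outside by open sets. Since $\Leb(D\cap K)=0$, outer regularity gives bounded open $U_k\supset D\cap K$ with $\Leb(U_k)\to 0$. For a fixed open $U$, the map $\mu\mapsto\mu(U)$ is lower semicontinuous for the vague topology. The portmanteau theorem for random measures, as in \cite{kallenberg2002foundations}, together with Fatou's lemma, then gives
\begin{align*}
\mathbf E[\P(U)]\leqslant\liminf_{j}\mathbf E[\X_{N_j}(U)].
\end{align*}
The proof is therefore complete once I have a uniform first-moment bound of the form
\begin{align*}
\limsup_N\ \mathbf E[\X_N(U)]\leqslant C\,\Leb(U)+\eta(\operatorname{diam}\text{-scale of }U),
\end{align*}
holding for open $U$ in a fixed compact window around $x_N$, with an error $\eta$ that vanishes as $\Leb(U)\to0$. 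Applying this bound to $U_k$ and letting $k\to\infty$ gives $\mathbf E[\P(D\cap K)]=0$.

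To produce such a bound I would use the first-order consequence of the hypothesis of Theorem~\ref{thm:main-asymp}, exactly as in the proofs of Lemma~\ref{lm:brute-square} and Lemma~\ref{lm:fluct-var}. By Cauchy--Schwarz,
\begin{align*}
\mathbf E[\X_N(\tau_y f_R)]=\Leb(f_R)+O(R^{\frac{d-\alpha}{2}})
\end{align*}
uniformly in $y$ in the bulk. Covering by translates of $B(x_0,\rho_f R)$ then gives $\mathbf E[\X_N(B(y,r))]\leqslant C r^d$ for all $r\geqslant 1$, uniformly in $N$ and $y$. This controls the intensity only down to the unit scale.

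The main obstacle is the sub-unit scale. The hypothesis only concerns $R\geqslant 1$, so by itself it does not rule out an intensity concentrated on a Lebesgue-null set. For instance, a deterministic shifted lattice satisfies the smooth-kernel bound, yet puts points on the sphere $\partial B_1$ where $1_{B_1}$ is discontinuous. My first attempt would be to exploit the freedom in $x_N$ together with the translation averaging that is built into the proof. Concretely, I would replace $\P$ by $\tau_{V}\P$ with $V$ uniform on a unit cube and independent of $\P$; its intensity is a convolution with the uniform density, hence bounded by $C\Leb$ by the unit-scale bound above. The portmanteau argument then gives the claim for $\tau_{V}\varphi$, and Fubini yields it for $\tau_{x}\varphi$ for $\Leb$-a.e. $x$.

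To obtain the statement exactly as worded, for $\varphi$ itself, this averaged version is not sufficient. I would need an additional input guaranteeing that the intensity of $\P$ is absolutely continuous. Examples are a bounded one-point correlation density for $\X_N$, which holds for the Gibbs measures and Girko matrices treated above, or the stationarity of $\P$. With either input, the bound $\mathbf E[\X_N(U)]\leqslant C\Leb(U)$ holds directly, and the argument of the second paragraph concludes. Verifying or isolating this absolute continuity is the step where I expect the real difficulty to lie.
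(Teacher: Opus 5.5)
Your diagnosis is correct and matches what the paper actually proves. With $E$ the discontinuity set of $\varphi$ and $\mu$ the intensity of $\P$, the paper's proof is the single Fubini-over-translations step $\int \mu(\tau_x RE)\,dx=\int \Leb(\tau_y RE)\,\mu(dy)=0$, taken over a bounded window. It therefore only concludes that $\tau_x\varphi_R$ is $\P$-continuous for $\Leb$-a.e.\ $x$, not the statement as worded, and your lattice example shows the literal wording can fail. This a.e.\ version is all the proof of Theorem~\ref{thm:main-asymp} uses, since the subsequent claim is for a.a.\ $x$ and the conclusion is an average over $x$ (or $x$-independent under stationarity).

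Your averaged argument via $\tau_V\P$ is essentially the paper's. It usefully makes explicit the local finiteness of $\mu$ (from the unit-scale first-moment bound plus portmanteau), which the paper's Fubini step uses tacitly. Once you have that, Tonelli gives the a.e.\ statement directly, so the open-set approximation in your second paragraph is not needed.
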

 
Since $ \varphi $ is $ \P$-continuous,  for each $ R\geqslant 1,$ $ \X_{ N}(\varphi_{ R} )\to \P(\varphi _{ R})$ in law by vague convergence  by  \cite[Thm. 14.16-(iv)]{kallenberg2002foundations}.    
We apply the previous result  \eqref{eq:final-main} with $ R_{ N} = R\geqslant R^\circ $ fixed, $ \varepsilon _{ N} = L N^{ -1/d}$, on the fixed range $ [1,R]$. We therefore have  for every $ x_{ 0}\in \Sigma ^{ 2L}$
\begin{align*}
L^{ -d}\int_{ \T[L]} \rr(R,N,x)dx \leqslant C_{ \varphi, f}\text{\rm{ where }}\rr(R,N,x) = \frac{ \textrm{Var}\left(\X_{ N}(\tau _{ x}\varphi _{ R})\right)}{V_{ \alpha ,\gamma }(R)}.
\end{align*}

We use the following claim, proved also later: for $ \Leb$-a.a. $ x\in \T,$
\begin{align*}
  \textrm{Var}\left(\P(\tau _{ x}\varphi _{ R})\right)\leqslant \liminf_{ N}  \textrm{Var}\left(\X_{ N}(\tau _{ x}\varphi _{ R})\right).
\end{align*}
Therefore Fatou's lemma yields
\begin{align*}
 L^{ -d}\int_{ \T[L]}\rr(R,x)dx\leqslant C_{ \varphi , f}\text{\rm{ where }} \rr(R,x) = \frac{  \textrm{Var}\left(\P(\tau _{ x}\varphi _{ R})\right)}{V_{ \alpha ,\gamma }(R)}.
\end{align*}
This concludes the proof in the general case.

In the stationary case, $ \rr(R,x)$ does not depend on $ x$, therefore it immediately gives the conclusion $$  \textrm{Var}\left(\P(\tau _{ x}\varphi _{ R})\right)\leqslant C_{ \varphi ,f} V_{ \alpha ,\gamma }(R).$$

To prove the claim, first remark that by Lemma \ref{lm:brute-square}, for $ x\in \T,$
\begin{align*}
\mathbf E \left[
\X_{ N}(\tau _{ x}\varphi _{ R})^{ 2}
\right]\leqslant C_{ f}'C_{ \varphi }R^{ 2d}.
\end{align*}
Then the claim follows from this lemma.
\begin{lemma}
\label{lm:5.6}
Let $ X_{ N}$ real random variables with $ X_{ N}  \xrightarrow[N \to \infty ]{w} X$. Assume $ \sup_{ N}\mathbf E\left[
 X_{ N}^{ 2}
\right]<\infty .$ Then 
\begin{align*}
  \textrm{Var}\left(X\right)\leqslant  \liminf_{ N}   \textrm{Var}\left(X_{ N}\right).
\end{align*}
\end{lemma}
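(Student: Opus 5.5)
The plan is to prove separately that $\mathbf E[X^2]\le\liminf_N \mathbf E[X_N^2]$ and that $\mathbf E[X]=\lim_N \mathbf E[X_N]$. Since $\textrm{Var}(Y)=\mathbf E[Y^2]-(\mathbf E Y)^2$, the claim follows immediately from these two facts. We may pass to a subsequence along which $\textrm{Var}(X_N)$ converges to the $\liminf$; weak convergence and the moment bound are preserved on it. Set $M:=\sup_N\mathbf E[X_N^2]<\infty$.

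\emph{Second moments.} The map $x\mapsto \min(x^2,K)$ is bounded and continuous, so weak convergence gives
\begin{align*}
\mathbf E[\min(X^2,K)]=\lim_N\mathbf E[\min(X_N^2,K)]\leqslant \liminf_N \mathbf E[X_N^2].
\end{align*}
Letting $K\to\infty$ and using monotone convergence yields $\mathbf E[X^2]\le\liminf_N\mathbf E[X_N^2]\le M$. In particular $X$ is square integrable.

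\emph{First moments.} This is the main point, since the cancellation of $(\mathbf E X_N)^2$ requires genuine convergence of the means, not just an inequality. The family $\{X_N\}$ is uniformly integrable, because $\mathbf E[|X_N|1_{|X_N|>K}]\le M/K$. For a truncation $T_K(x)=\max(-K,\min(x,K))$, which is bounded and continuous, we have
\begin{align*}
|\mathbf E X_N-\mathbf E X|\leqslant |\mathbf E X_N-\mathbf E T_K(X_N)|+|\mathbf E T_K(X_N)-\mathbf E T_K(X)|+|\mathbf E T_K(X)-\mathbf E X|.
\end{align*}
The first and third terms are at most $M/K$, using for the third term the bound $\mathbf E X^2\le M$ from the previous step. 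The middle term tends to $0$ as $N\to\infty$ by weak convergence. Letting $K\to\infty$ gives $\mathbf E X_N\to\mathbf E X$.

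Combining the two steps along the chosen subsequence,
\begin{align*}
\textrm{Var}(X)=\mathbf E[X^2]-(\mathbf E X)^2\leqslant \liminf_N\mathbf E[X_N^2]-\lim_N(\mathbf E X_N)^2=\liminf_N\textrm{Var}(X_N),
\end{align*}
which is the statement of the lemma.
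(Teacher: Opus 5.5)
Your proof is correct and follows essentially the same route as the paper: a truncation argument with $\min(\cdot,K)$ and monotone convergence gives lower semicontinuity of second moments under weak convergence, and uniform integrability (which you verify explicitly) gives $\mathbf E X_N\to\mathbf E X$. The only cosmetic difference is in the last step: the paper applies its Fatou-type lemma directly to the centred squares $(X_N-\mathbf E X_N)^2$, whereas you subtract the converging squared means from the second-moment inequality, which avoids a Slutsky-type step.
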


\begin{proof}Relies on the following lemma.
\begin{lemma}
Let $ U_{N}\geqslant 0$ converging in law to $ U$. Then\begin{align*}
 \mathbf E U\leqslant \liminf_{ N} \mathbf E U_{N}.
\end{align*}
\end{lemma}

\begin{proof}
Let $ T>0$ a truncation parameter. Then $ U_{N }\wedge T\to U\wedge T.$ By convergence in law,
\begin{align*}
 \mathbf E \left[
U\wedge T
\right] \leqslant \liminf_{ N}\mathbf E \left[
U_{N}\wedge T
\right]\leqslant \liminf_{ N}\mathbf E \left[
U_{N}
\right].
\end{align*}
Then let $ T\to \infty $ and use monotone convergence theorem on the LHS.
\end{proof}

Coming back to the $ X_{ N}$. We have $ \mathbf E X^{ 2}\leqslant \liminf_{ N}\mathbf E X_{ N}^{ 2}$. We also have uniform integrability since $ \sup_{ N}\mathbf E \left[
\X_{ N}^{ 2}
\right]<\infty $, hence $ \mathbf E X_{ N}\to \mathbf E X.$

Then apply the previous lemma to $ (X_{ N}-\mathbf E X_{ N})^{ 2}  \xrightarrow[N \to \infty ]{w} (X-\mathbf E X)^{ 2}$, giving with Fatou's lemma
\begin{align*}
  \textrm{Var}\left(X\right)\leqslant  \liminf_{ N}\textrm{Var}\left(X_{ N}\right).
\end{align*}

\begin{proof}[Proof of Lemma  \ref{lm:P-continuous}]

Let $ \mu $ the intensity $ \mathbf E\left[
 \P(\cdot )
\right]$ of $ \P$, and $ E$ the set of points where $ \varphi $ is discontinuous, hence $ \tau _{ x}RE$ is the discontinuity set of $ \tau _{ x}\varphi _{ R}$. Fubini's theorem gives 
\begin{align*}
\displaystyle\int_{\T} \mathbf E \left[
\P (\tau _{ x}RE)
\right] dx=&\displaystyle\int_{\T} \mu (\tau _{ x}RE)dx = \displaystyle\int_{\T}\displaystyle\int_{\tau _{ x}RE}\mu (dy)dx = \displaystyle\int_{\T}\displaystyle\int_{\mathbb{R}^{ d}}{\bf 1}\{ _{ RE} \}(x + y)dx\mu (dy)\\
 =&  \displaystyle\int_{\mathbb{R}^{ d}}\Leb(\tau _{ y}RE)\mu (dy) = 0.
\end{align*}
Therefore, for $ \Leb$-a.e. point of $ \T$, $ \P $ has no point in the discontinuity set of $ \tau _{ x}\varphi _{ R}$, meaning that $ \tau _{ x}\varphi _{ R}$ is $ \P$-continuous a.s..  
\end{proof}

\phantomsection 
   \section*{Appendix: Proof of Lemma  \ref{lm:spectral}  }
   \label{app}
\addcontentsline{toc}{section}{\protect\numberline{}Appendix: Proof of Lemma  \ref{lm:spectral}}
 
 Consider the operator on $ \C^{ 0}( \T)^{ 2}$
\begin{align*}
 L(f,g) =  \textrm{Cov}\left(\M(f),\M(g)\right).
\end{align*}
It is  continuous on   $ \C^{ 0}( \T)^{ 2}$   because with Cauchy-Schwarz inequality, $$  |L(f,g) | \leqslant \|f\|_{\infty }\|g\|_\infty  \sqrt{  \textrm{Var}\left(\M( { \rm supp}(f)  ) \right)  \textrm{Var}\left(\M(  { \rm supp}(g)   )\right)}.$$
   Since $ \T$ is compact,   Riesz representation theorem (\cite{Rudin}) yields  a finite complex Borel measure $ \mu  $ representing $ L$ on $ \T\times \T$. i.e. 
\begin{align*}
L(f,g) = \int_{ (\T)^{ 2}}f(x)   \overline{   g(y)} \mu (dx,dy).
\end{align*}
With $ \mu '$ the push-forward measure of $ \mu $ under the mapping $ (x,y)  \mapsto  (x,y-x)$, write $ L$ as 
\begin{align*}
L(f,g) = \int_{ ( \T)^{ 2}}f(x)  \overline{g(x+z)}\mu '(dx,dz) .
\end{align*}
   The  translation invariance of $ \M$  implies the $ x$-translation invariance of $ \mu '$, hence a disintegration theorem (\cite[Section A2.7]{DVj08}) yields a measure $ \C$ on $ \T$, called  {\it (reduced) covariance measure} such that 
\begin{align*}
L(f,g) = \int_{( \T)^{ 2}}f(x)  \overline{g(x+z)}dx\C (dz).
\end{align*}
This measure  $ \C$ is positive-definite in the sense that 
\begin{align*}
 \int_{ (\T)^{ 2}} f(x)  \overline{  f(x + z)}dx\C(dz) =  \textrm{Var}\left(\M(f)\right)\geqslant 0.
\end{align*}
Therefore, by Bochner's theorem on locally compact abelian groups \cite[Th. 4.5]{Berg}, there is a non-negative measure $ \S $ on $ \Z$ such that 
\begin{align*}
L(f,g) = \int_{\Z} \hat f    \widebar{ \hat g}d \S, f,g\in \C^{ 0}  (\X)
\end{align*}
with the right hand side always finite. 

It remains to extend the identity from continuous test functions to bounded
 functions. Let \(\chi\) be a mollifier such that
\(\chi\ge 0\), \(\int_{\mathbb R^d}\chi(x)\,dx=1\), \(\chi\) has compact
support, and
\[
0\le \widehat \chi \le 1,
\qquad 
\widehat \chi(\xi)=\varphi(|\xi|)
\]
with \(\varphi\) non-increasing. For \(q\ge 1\), define
\[
\chi_q(x):=q^d\chi(qx).
\]
On the torus, \(\chi_q\) is understood as its periodisation. Then\[
\widehat{\chi_q}(\xi)=\widehat \chi(\xi/q),\qquad \xi\in  \Z,
\]
and therefore
\[
0\le \widehat{\chi_q}(\xi)\uparrow 1
\qquad\text{as }q\to\infty .
\]

Let \(f_q:=f*\chi_q\). Then 
\[
\|f_q\|_\infty\le \|f\|_\infty,
\qquad 
f_q\to f \quad\text{in }L^1
\]
and
pointwise on \(K\setminus N_f\) where $ N_{ f}$ is $ \Leb$-negligible. By stationarity of the intensity of
\(\M\), one has
\[
\mathbb E[\M(N_f)]=0,
\]
hence \(\M(N_f)=0\) almost surely. Therefore \(f_q\to f\) for
\(\M\)-a.e. \(x\), almost surely. Since
\[
|\M(f_q)-\M(f)|
\le 2\|f\|_\infty \M(\T),
\]
and \(\mathbb E[\M(\T)^2]<\infty\), dominated convergence gives
\[
\M(f_q)\to \M(f)
\qquad\text{in }L^2(\Omega).
\]
Consequently,
\[
\operatorname{Var}(\M(f_q))\to \operatorname{Var}(\M(f)).
\]

On the spectral side, for every \(q\),
\[
\operatorname{Var}(\M(f_q))
=
 \int_{\Z}|\widehat{f_q}(\xi)|^2\,\S(d\xi ).
\]
Since
\[
\widehat{f_q}(\xi)
=
\widehat f(\xi)\widehat{\chi_q}(\xi)
=
\widehat f(\xi)\widehat\chi(\xi/q),
\]
we have
\[
|\widehat{f_q}(\xi)|^2
=
|\widehat f(\xi)|^2|\widehat\chi(\xi/q)|^2
\uparrow
|\widehat f(\xi)|^2 .
\]
Thus, by monotone convergence,
\[
\lim_{q\to\infty}
 \int_{\Z}|\widehat{f_q}(\xi)|^2\,\S(d\xi )
=
 \int_{\Z}|\widehat f(\xi)|^2\,\S(d\xi ).
\]
Combining the two limits yields
\[
\operatorname{Var}(\M(f))
=
 \int_{\Z}|\widehat f(\xi)|^2\,\S(d\xi ).
\]

Finally, the covariance identity follows by polarization. Namely, for
\[
Q(h):=\operatorname{Var}(\M(h)),
\]
one has, with the convention that covariance is linear in the first argument,
\[
\operatorname{Cov}(\M(f),\M(g))
=
\frac14\sum_{k=0}^3 \mathrm i^k Q(f+\mathrm i^k g).
\]
Applying the variance identity to \(f+\mathrm i^k g\) gives
\[
\operatorname{Cov}(\M(f),\M(g))
=
\int_{\Z}\widehat f(\xi)\overline{\widehat g(\xi)}\,\S(d\xi ).
\]

\end{proof} 

\phantomsection 
\addcontentsline{toc}{section}{\protect\numberline{}Bibliography}

\bibliographystyle{abbrv}
\bibliography{/Users/raphlr/Seafile/bibliotek/recherche/bibi2bis}

 \end{document}